\newcommand{\e}{\mathrm{e}}
\newcommand{\Ombar}{\overline{\Omega}}
\newcommand{\io}{\int_\Omega}
\newcommand{\nep}{n_{\varepsilon}}
\newcommand{\cep}{c_{\varepsilon}}
\newcommand{\uep}{\mathbf{u}_{\varepsilon}}
\newcommand{\f}[2]{\frac{#1}{#2}}
\newcommand{\norm}[2][]{\|#2\|_{#1}}
\newcommand{\Lom}[1]{L^{#1}(\Omega)}
\newcommand{\projection}{\mathcal{P}}
\newcommand{\eps}{\varepsilon}
\title{Generalized solutions to a chemotaxis-Navier-Stokes system with arbitrary superlinear degradation}
\author{Mengyao Ding$^1$,~~Johannes Lankeit$^2$\\[6pt]
\footnotesize$^{\textrm 1}$ School of Mathematical Sciences, Peking University,
100871 Beijing, PR China\\
\footnotesize$^{\textrm{ 2}}$
Leibniz Universität Hannover, Institut für Angewandte Mathematik, Welfengarten 1, 30167 Hannover, Germany
}
\date{}
\newtheorem{theorem}{Theorem}[section]
\newtheorem{definition}[theorem]{Definition}
\newtheorem{lemma}[theorem]{Lemma}
\newtheorem{corollary}[theorem]{Corollary}
\newtheorem{remark}[theorem]{Remark}
\begin{document}\maketitle
\begin{abstract}
\noindent\textbf{Abstract.} In this work, we study a chemotaxis-Navier-Stokes model in a two-dimensional setting as below,
  \begin{eqnarray}
 \left\{
    \begin{array}{llll}
       \displaystyle n_{t}+\mathbf{u}\cdot\nabla n=\Delta n-\nabla \cdot(n\nabla c)+f(n), &&x\in\Omega,\,t>0,\\
        \displaystyle  c_{t}+\mathbf{u}\cdot\nabla c=\Delta c - c+ n, &&x\in\Omega,\,t>0,\\
        \displaystyle \mathbf{u}_{t}+\kappa(\mathbf{u}\cdot\nabla)\mathbf{u}=\Delta \mathbf{u} +\nabla P+ n\nabla\phi, &&x\in\Omega,\,t>0,\\
        \displaystyle \nabla\cdot\mathbf{u}=0,&&x\in\Omega,\,t>0.\\
    \end{array}
 \right.
\end{eqnarray}
Motivated by a recent work due to Winkler,
we aim at investigating generalized solvability for the model the without
imposing a critical superlinear exponent restriction on the logistic source function $f$. Specifically, it is proven in the present work that there exists a triple of integrable functions $(n,c,\mathbf{u})$ solving the system globally in a generalized sense provided that $f\in C^1([0,\infty))$
satisfies $f(0)\ge0$ and $f(n)\le rn-\mu n^{\gamma}$ ($n\ge0$) with any $\gamma>1$.
Our result indicates that persistent Dirac-type singularities can be ruled out
in our model under the aforementioned mild assumption on $f$.
After giving the existence result for the system, we also show that the generalized solution
exhibits eventual smoothness as long as $\mu/r$ is sufficiently large.
\\[10pt]
\textbf{Keywords}:  chemotaxis; fluid; logistic source; generalized solution; eventual smoothness\\[6pt]
\textbf{Mathematics Subject Classification 2020}: 92C17; 35K55; 35A01; 35D99
\end{abstract}

\section{Introduction}
In this article, we consider the following Keller-Segel-Navier-Stokes system
  \begin{eqnarray} \label{q1}
 \left\{
    \begin{array}{llll}
       \displaystyle n_{t}+\mathbf{u}\cdot\nabla n=\Delta n-\nabla \cdot(n\nabla c)+f(n), &&x\in\Omega,\,t>0,\\
        \displaystyle  c_{t}+\mathbf{u}\cdot\nabla c=\Delta c - c+ n, &&x\in\Omega,\,t>0,\\
        \displaystyle \mathbf{u}_{t}+\kappa(\mathbf{u}\cdot\nabla)\mathbf{u}=\Delta \mathbf{u} +\nabla P+ n\nabla\phi, &&x\in\Omega,\,t>0,\\
        \displaystyle \nabla\cdot\mathbf{u}=0,&&x\in\Omega,\,t>0,\\
        \displaystyle \partial_{\nu}n=\partial_{\nu}c=\mathbf{u}=0,&&x\in\partial\Omega,\,t>0,\\
        \displaystyle n(x,0)=n_0(x),c(x,0)=c_0(x),\mathbf{u}(x,0)=\mathbf{u}_0(x),&&x\in\Omega,\\
    \end{array}
 \right.
\end{eqnarray}
which models a chemotactically active species in a fluid environment, in a bounded domain $\Omega\subsetℝ^2$. While global classical solvability cannot be expected for arbitrary initial data due to the propensity for blow-up inherent in the chemotaxis subsystem, solvability in a generalized sense has recently been proven in the fluid-free setting, \cite{W2019}. In the present work we show that a framework of generalized solvability can even cope with the coupling to a Navier--Stokes fluid ($\kappa=1$), despite the latter negatively affecting a priori known regularity properties of the system.\\

\textbf{The system.} The quantities in \eqref{q1} denote a density $n$ of cells (for example, of bacteria); the concentration $c$ of a chemical signal towards higher concentrations of which the cells move, as indicated by the cross-diffusive ``chemotaxis term''; and the velocity field $\mathbf{u}$ and pressure $P$ of the fluid, which transports both cells and chemical, and in turn is affected by buoyancy forces driven by differences in density between cells and fluid. The given function $\phi\in W^{2,\infty}(\Omega)$ models the gravitational potential causing the buoyancy, and $f\in C^1([0,\infty))$ is used to describe the growth of the population and will be a generalization of the typical choices of logistic-type reproduction. 
Without fluid motions included,
  the model (\ref{q1}) turns
 to be the system
\begin{eqnarray}\label{q2} \left\{
\begin{array}{llll}
 \displaystyle  n_t=\Delta n-\nabla\cdot(n\nabla c)+f(n), & x\in\Omega,~~t>0,\\[4pt]
 \displaystyle  c_t=\Delta c-c+n,& x\in\Omega,~~t>0,
\end{array}\right. \end{eqnarray}
considered in bounded domains $\Omega\subsetℝ^d$, which has attracted great attention for decades (cf. e.g. the surveys \cite{Horstmann,BBTW,LW2019}).
When $f(s)\equiv0$, the system -- then the classical Keller--Segel system -- admits distinct solution behaviour, from global existence to blow-up of solutions, depending on the spatial dimension and the initial data and their mass (cf. the above-mentioned surveys and references therein, in particular \cite{OY,NSY,HV,MW,Wi2013,Wi2010}).\\
To what extent exactly the presence of logistic-type terms $f(s)=rs-\mu s^{\gamma}$ with $\gamma>1$, $\mu>0$, $r\inℝ$, in the first equation of \eqref{q1} or \eqref{q2} hinders the occurrence of blow-up is still subject of ongoing research. Several partial results, however, are known:\\
In the most prototypical case of $\gamma=2$, solutions are global and classical if $d\le 2$, \cite{OTYM}, and if $d\ge3$ according to \cite{W2010} there is $\mu_0>0$ such that $\mu>\mu_0$ ensures global classical solvability. The dependence of an upper bound for $\mu_0$ on other parameters in the system has been investigated by Xiang in \cite{X} following the approach of \cite{W2010}.\\
Without imposing any largeness restriction
on $\mu$, the solvability in the weak sense was determined in \cite{L} for any $d\ge3$ and $r>0$, the
eventual smoothness of these weak solutions was also discussed under the condition that $d=3$ and $r$ is sufficiently small in \cite{L}.\\
In parabolic--elliptic relatives of the system -- for which, in principle, boundedness results similar to those reported for \eqref{q2} are available (see \cite{TelloWin}) -- blow-up has been detected in some cases: First in \cite{W2011} for $d\ge 5$ and
$0<\gamma<\frac{3}{2}+\frac{1}{2d-2}$ in a system where the second equation of \eqref{q2} is replaced by $0=\Delta c-\frac{1}{|\Omega|}\int_{\Omega} n+n$, but also for systems with $0=\Delta c - c + n$ and in $d\ge3$ such results have more recently been obtained, \cite{WZAMP,F2020,BFL}.\\
Recently, considering a suitably designed framework of generalized solvability,
Winkler \cite{W2019} showed that global solution $(n, c)$ with $n\in L_{loc}^{1}\big(\overline{\Omega} \times[0, \infty)\big)$ can be constructed
under the mere hypothesis that
$f$ satisfies $f(0)\ge0$ and
 \begin{align}\label{i}
\frac{f(s)}{s}\rightarrow-\infty \quad \text{as}\quad s\rightarrow\infty.
\end{align}
This result indicates that the mild condition \eqref{i} can rule out the occurrence of persistent Dirac-type singularities in the model \eqref{q2}.
Inspired by \cite{W2019}, the present work is devoted to proving that there is no critical superlinear exponent on the logistic function $f$ for ensuring the generalized solvability when the model is coupled by the Navier-Stokes equation.\\
The interest in the coupling of fluid equations to chemotaxis systems, although initially focussed on systems where the signal substance is consumed (see the overview in \cite[Sec. 4.1.1]{BBTW} or the derivation in \cite{bellomo_bellouquid_chouhad}) also extends to settings with signal production, see e.g. \cite{kiselev_ryzhik,espejo_suzuki}, in the context of broadcast spawning of corals, or \cite{B2018,wu_xiang,yu_wang_zheng,LW,WX2015,Zheng2016}. If the fluid flow is governed by the full Navier--Stokes equations, the resulting system is \eqref{q1} with $\kappa=1$.
Classical solvability necessarily can only be expected in 
settings where both the Keller--Segel subsystem and the Navier--Stokes equations have classical solutions.
For a small-data existence result in the case of $f\equiv 0$ see \cite{yu_wang_zheng}.
Several further findings concerning classical or weak solutions rest on stronger nonlinear diffusion, \cite{B2018,LW,Zheng2016}, a decaying sensitivity function \cite{WX2015,LW,Zheng2016} or, especially for $d\ge3$, simplification of the fluid flow to the Stokes equation \cite{WX2015,Zheng2016}.\\
When the logistic term $f(s)=rs-\mu s^2$ is involved,
Tao and Winkler \cite{TW2016} proved that the model \eqref{q1} with $d=2$ admits a global and bounded classical solution regardless of the size of $r\ge0,\mu>0$. For $d\ge 3$ with a Stokes-governed flow ($\kappa=0$), classical solutions were found in \cite{TW2015} if $μ>23$.
These results, however, rely on $\gamma=2$. For less than quadratic degradation terms in $f$,
generalized solutions have recently been found (see \cite{mengyao} for $d=2$ and \cite{WWX21} for $d=3$),
but only for the case of Stokes-fluid.\\
Motivated by \cite{W2019}, the purpose of the present paper is to study the global solvability of the model \eqref{q1} under a mild
assumption on the logistic function.\\
For the initial data, we assume
\begin{equation}\label{initial}
\left\{\begin{array}{l}{n_{0} \in L^{1}(\Omega) \text { with } n_{0} \geq 0 \text { in } \Omega \text { and } n_{0} \not \equiv 0}, \\
c_{0} \in D((-\Delta+1)^{2\sigma}) \quad \text{for some }\sigma\in(0,\frac14),\\
{\mathbf{u}_{0}\in L^{2}\left(\Omega ; \mathbb{R}^{2}\right)
 ~\text {and } \nabla \cdot \mathbf{u}_{0}=0\text { in}~\mathcal{D}'(\Omega). }
\end{array}\right.
\end{equation}
where $\Delta$ is the Neumann-Laplacian on $\Lom2$ (see also Sec.~\ref{operator}). 
With these, our main result
reads as follows.

\begin{theorem} \label{th 1}
Let $\Omega\subset \mathbb{R}^{2}$ be a bounded domain with smooth boundary. Suppose that $f\in C^{1}\big([0,\infty)\big)$ is such that
\begin{align}\label{f}
f(0)=0,\quad f(s)\le rs-\mu s^{\gamma} ~~ for~all~s\ge0
\end{align}
with $r\in ℝ$, $\mu>0$ and $\gamma>1$. Then for any given initial data $(n_0,c_0,\mathbf{u}_0)$ satisfying \eqref{initial},
there exist functions
\begin{equation}\label{solnreg}
\left\{\begin{array}{l}{n \in L_{loc}^{1}(\overline{\Omega} \times[0, \infty)),} \\
{c\in L_{l o c}^{2}\big([0, \infty) ; W^{1,2}(\Omega)\big),}\\
{\mathbf{u}\in  L_{l o c}^{2}\big([0, \infty) ; W_0^{1,2}(\Omega;ℝ^2)\big)}\end{array}\right.
\end{equation}
with the property that $(n,c,\mathbf{u})$ forms a global generalized solution of \eqref{q1} in the sense of Definition \ref{def3}
below.
\end{theorem}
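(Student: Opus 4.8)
The plan is to construct generalized solutions as limits of solutions to a regularized family of problems, following the general philosophy of \cite{W2019} but now carrying along the fluid component. First I would introduce an approximation: replace $f$ by a globally Lipschitz, bounded-from-above family $f_\eps$ satisfying $f_\eps\to f$ locally uniformly and $f_\eps(s)\le rs-\mu s^\gamma$ uniformly in $\eps$, and regularize the chemotactic coupling (e.g.\ replacing $n\nabla c$ by $\frac{n}{1+\eps n}\nabla c$ and inserting a Yosida-type approximation $(1+\eps A)^{-1}$ of the Stokes operator in the transport terms, as in the standard Navier--Stokes-chemotaxis literature). For each fixed $\eps>0$ this yields a globally-in-time bounded classical solution $(\nep,\cep,\uep)$; the existence of such approximate solutions is routine in two dimensions once the nonlinearities are tamed.

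The heart of the matter is to extract $\eps$-independent a priori bounds strong enough to pass to the limit while weak enough to survive the loss of regularity caused by the Navier--Stokes coupling. The basic entropy-type structure still supplies $\int_\Omega \nep\ln\nep$, $\int_0^T\!\!\int_\Omega \frac{|\nabla\nep|^2}{\nep}$ and the energy identity for $\uep$ giving $\uep$ bounded in $L^\infty_{loc}(L^2)\cap L^2_{loc}(W^{1,2}_0)$; the superlinear dissipation $\mu\int_0^T\!\!\int_\Omega \nep^\gamma$ comes for free from the mass/entropy inequalities and is exactly what forces $\nep$ into a (sub-)reflexive-but-nonuniformly-integrable regime — hence the need for the \emph{generalized} framework rather than a classical weak one. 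I would then derive bounds for $\cep$ in $L^2_{loc}(W^{1,2})$ via testing the second equation, and crucially obtain compactness of $\cep$ and $\uep$ in suitable strong topologies (Aubin--Lions, using the equations to control time derivatives in negative-order spaces). For $\nep$ the key device, as in \cite{W2019}, is to work with a nonlinear function such as $\ln(1+\nep)$ or $(\nep+1)^{-\alpha}$: one shows $\nabla\ln(1+\nep)$ is bounded in $L^2_{loc}$ and $\partial_t\ln(1+\nep)$ is controlled in some $L^1_{loc}(W^{-k,\cdot})$, so that $\ln(1+\nep)$ converges strongly a.e., giving $\nep\to n$ a.e.\ and, via the $L^\gamma$ bound plus Vitali, in $L^1_{loc}$.

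With these convergences in hand I would pass to the limit in each equation of the regularized system tested against the appropriate class of test functions dictated by Definition~\ref{def3}: a supersolution-type inequality for $n$ (since the chemotaxis and logistic terms only survive in an inequality direction after taking limits in the not-uniformly-integrable terms), the mass/"$n$ is a weak $L^1$ solution from above" condition, and genuine weak formulations for $c$ and $\mathbf{u}$ — here the $L^2_{loc}(W^{1,2})$ regularities in \eqref{solnreg} are precisely what the energy bounds deliver, and the Navier--Stokes nonlinearity $(\uep\cdot\nabla)\uep$ is handled by the strong $L^2_{loc}(L^2)$ convergence of $\uep$ combined with its $L^2_{loc}(W^{1,2}_0)$ bound, exactly as in the Leray theory. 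The terms coupling the components ($\nep\nabla\cep$, $\nep\uep$, $\nep\nabla\phi$) are dealt with by pairing the strong convergence of one factor with the weak convergence of the other, using the $L^\gamma$ bound on $\nep$ (with $\gamma>1$) to guarantee the products are at least equiintegrable enough to identify the limits — this is where the superlinearity of the degradation, however weak, is indispensable. The main obstacle I anticipate is establishing strong precompactness of $\nep$ (equivalently, the a.e.\ convergence) without any quantitative smoothing beyond the degenerate $\int|\nabla\nep|^2/\nep$: one must carefully choose the nonlinear test function and the negative Sobolev space for the time derivative so that Aubin--Lions applies despite the low integrability of the chemotaxis term $\nep\nabla\cep$ and the fluid transport term $\nep\uep$, the latter being new relative to \cite{W2019} and only controllable because $\uep\in L^2_{loc}(W^{1,2}_0)\hookrightarrow L^2_{loc}(L^q)$ for all $q<\infty$ in two dimensions.
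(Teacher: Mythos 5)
Your high-level strategy --- regularize, extract $\eps$-uniform bounds, apply Aubin--Lions and Vitali to obtain compactness, and pass to the limit in an inequality direction for the $n$-equation --- coincides with the paper's. However, two of the estimates you rely on do not hold under the hypotheses of Theorem~\ref{th 1}, and the paper's central technical device is absent from your proposal.

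First, you assert that the entropy bounds $\io\nep\ln\nep$ and $\int_0^T\io|\nabla\nep|^2/\nep$ ``come for free from the mass/entropy inequalities.'' For arbitrary $\gamma>1$ and arbitrary $r,\mu$ this is false: the cross term $\io\nep\Delta\cep$ appearing in $\f{d}{dt}\io\nep\ln\nep$ cannot be absorbed by the dissipative terms unless $\io\nep$ is small, which only occurs when $r_+/\mu$ is small (compare Lemma~\ref{lem5.1} and Lemma~\ref{lem5.4}, where precisely this smallness is assumed). The paper's existence argument therefore never invokes $\io\nep\ln\nep$; it derives only the much weaker $\int_0^T\io|\nabla\ln(\nep+1)|^2\le C$ (Lemma~\ref{lem3.10}), obtained by testing the identity of Lemma~\ref{lem2.2} with $\varphi\equiv1$, and this bound --- unlike $\int|\nabla\sqrt\nep|^2$ --- is uniform in $\eps$ regardless of $r$ and $\mu$.

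Second, and decisively: you propose to bound $\cep$ in $L^2_{loc}(W^{1,2})$ by testing the $c$-equation and then appeal to Aubin--Lions for ``compactness of $\cep$ in suitable strong topologies.'' But Aubin--Lions applied to the triple $W^{1,2}\hookrightarrow L^2\hookrightarrow(W^{1,3})^*$ gives strong compactness of $\cep$ in $L^2_{loc}(L^2)$ only, not of $\nabla\cep$ in $L^2_{loc}(L^2)$, since $W^{1,2}$ does not embed compactly into itself. Strong $L^2$-convergence of $\nabla\cep$ is indispensable when passing to the limit in the cross terms $(\nep+1)^{-2}\nep\nabla\nep\cdot\nabla\cep$ and $(\nep+1)^{-1}\nep\nabla\cep$ of the supersolution inequality in Definition~\ref{def3}, where the $\nep$-factor converges only weakly (see Lemma~\ref{lem4.3}). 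The paper obtains the missing compactness by testing the $c$-equation with $\A^\beta\cep$ for a carefully chosen exponent $\beta=\min\{2\sigma,\gamma-1\}>0$ (Lemma~\ref{lem3.7}), which yields a bound on $\cep$ in $L^2_{loc}(D(\A^{(\beta+1)/2}))$, a space compactly embedded into $L^2_{loc}(W^{1,2})$. Taking $\beta$ strictly between $0$ and $1$ (rather than $\beta=1$, i.e.\ testing with $\A\cep$) is precisely what makes the scheme work for all $\gamma>1$ rather than only $\gamma\ge2$; this is the innovation the paper singles out at the end of Section~2, and without it the passage to the limit in the cross-diffusive terms fails.

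Minor point: your approximation scheme regularizes the chemotaxis term itself and inserts a Yosida approximation of the Stokes operator. The paper instead adds a quadratic damping $-\eps\nep^2$ to the first equation and saturates the source $n\mapsto n/(1+\eps n)$ in the $c$-equation, leaving the chemotactic flux and the Navier--Stokes nonlinearity untouched (which in two dimensions require no Yosida regularization). Your variant would need a separate argument for global classical solvability of the approximate system, since bounding the chemotaxis term alone does not ensure boundedness of $\nep$ for fixed $\eps$.
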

%

Whereas Theorem~\ref{th 1} answers the question of existence rather completely, the regularity of solutions as guaranteed by \eqref{solnreg} is far from that of classical solutions. On the other hand, for $\gamma=2$ the system (at least without fluid) is known to regularize its solutions: It has been shown (\cite{lankeit_2020}) that even initial data in $L^1\times W^{1,2}$ result in solutions that are smooth in $\Omega\times(0,\infty)$ in two-dimensional domains $\Omega$, and in a three-dimensional setting solutions become smooth after some waiting time if $r$ is not too large, \cite{L}.
Also in related chemotaxis-fluid systems with logistic sources it has been demonstrated that for large times regularity or even convergence can be achieved, see e.g. \cite{L2016,yulan} for a related system with signal consumption and \cite{win_JFA} for a stabilization result concerning solutions of \eqref{q1} with $γ=2$ and small $r$.
Here we show that the same eventual smoothness already occurs in systems with much weaker degradation ($γ>1$), again under a smallness condition on $r$ (cf. \cite{win_JFA,L2016}):

\begin{theorem} \label{th 2}
Let $\Omega\subset \mathbb{R}^{2}$ be a bounded domain with smooth boundary. Let $\gamma>1$. Then there is $\mu_0=\mu_0(\gamma,\Omega)>0$ such that for every function $f$ fulfilling \eqref{f} with $r\in \mathbb{R}$ and $μ>μ_0r_+$ and for all 
initial data $(n_0,c_0,\mathbf{u}_0)$ as in \eqref{initial}, the global generalized solution $(n,c,\mathbf{u})$ of \eqref{q1} constructed in Theorem \ref{th 1} satisfies
\begin{align}
n\in C^{2,1}(\overline{\Omega}\times[T,\infty)), ~~c\in C^{2,1}(\overline{\Omega}\times[T,\infty)),~~
\mathbf{u}\in C^{2,1}(\overline{\Omega}\times[T,\infty))
\end{align}
with some $T>0$.
\end{theorem}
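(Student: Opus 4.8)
The plan is to combine the generalized-solution framework from Theorem \ref{th 1} with a quantitative "eventual regularization" argument that exploits the smallness of $r_+/\mu$. Since the generalized solution is obtained as a limit of solutions $(\nep,\cep,\uep)$ to suitably regularized problems, the natural strategy is to establish, uniformly in $\eps$, that after a waiting time the solutions enter and remain in a small-norm regime in which parabolic smoothing and standard bootstrap arguments apply. Concretely, I would first revisit the mass/energy functionals already used in the proof of Theorem \ref{th 1}: integrating the $n$-equation gives $\frac{d}{dt}\io\nep \le r\io\nep - \mu\io\nep^{\gamma}$, and by Jensen/Hölder on the bounded domain $\Omega$ this yields a differential inequality forcing $\limsup_{t\to\infty}\io\nep(\cdot,t) \le C(\gamma,\Omega)\,(r_+/\mu)^{1/(\gamma-1)}$, uniformly in $\eps$. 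Thus for $\mu > \mu_0 r_+$ the cell mass is eventually small; one then propagates this smallness to $\cep$ via the second equation (the zeroth-order term $-c$ helps) and controls $\uep$ in $L^2$ through a standard energy estimate for the (Navier--)Stokes equation with forcing $\nep\nabla\phi$, whose $L^2(0,\infty;L^2)$-type contribution is governed by the same smallness.

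Next I would run the "conditional smoothing" machinery in the style of \cite{lankeit_2020,L,win_JFA}: pick $T_0$ large enough that at time $T_0$ all three quantities are below a threshold (in the relevant Lebesgue/Sobolev norms), uniformly in $\eps$, and then show that a functional of the form $\io\nep^{p} + \io|\nabla\cep|^{2q} + \|\uep\|_{L^2}^2$ (with suitable $p,q$ — in two dimensions $p$ slightly above $1$ suffices to start the cascade) satisfies an autonomous differential inequality of the type $y' \le -\delta y + C y^{1+\theta}$ once $y$ is small, hence stays small for all $t\ge T_0$. The degradation exponent $\gamma>1$ enters only to guarantee the initial smallness of the mass and to provide the absorptive $-\mu\io\nep^{p+\gamma-1}$ term; crucially, no lower bound on $\gamma$ beyond $\gamma>1$ is needed because we are not asking for boundedness from arbitrary data, only persistence of an already-small regime. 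From the resulting uniform-in-$\eps$ bounds in $L^\infty((T_0,\infty);L^p(\Omega))$ etc., standard parabolic regularity (semigroup estimates for the Neumann heat semigroup and the Stokes semigroup, followed by Schauder theory) upgrades this, step by step, to uniform bounds in $C^{2+\alpha,1+\alpha/2}(\Ombar\times[T,\infty))$ for some $T>T_0$ and all $\eps$ small.

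Finally I would pass to the limit: the uniform Hölder bounds give, along the subsequence already selected in Theorem \ref{th 1}, local (hence global on $\Ombar\times[T,\infty)$) $C^{2,1}$-convergence of $(\nep,\cep,\uep)$ to some $(\tilde n,\tilde c,\tilde{\mathbf u})$ solving \eqref{q1} classically on $[T,\infty)$; a consistency argument (the generalized solution is in particular a weak/distributional solution, and classical solutions of \eqref{q1} with the given regularity are unique) identifies $(\tilde n,\tilde c,\tilde{\mathbf u})$ with $(n,c,\mathbf u)$ on $\Ombar\times[T,\infty)$, yielding the claimed $C^{2,1}$-regularity. I expect the main obstacle to be the $\eps$-uniformity of the smoothing step in the presence of the full Navier--Stokes nonlinearity: the convective term $\kappa(\uep\cdot\nabla)\uep$ destroys the simple energy cancellations available in the Stokes case, so one has to be careful that the $W^{1,2}$-bound on $\uep$ coming from \eqref{solnreg} together with the smallness of $\|\uep\|_{L^2}$ is genuinely enough to close the estimate for $\io\nep^p$ (where $\uep\cdot\nabla\nep$ appears) without circular dependence — in two dimensions this is exactly borderline and will require the Ladyzhenskaya inequality and an interpolation carefully balanced against the absorptive term $-\mu\io\nep^{p+\gamma-1}$.
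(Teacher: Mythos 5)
Your high-level architecture matches the paper's: use the logistic term to force eventual mass smallness $\limsup_{t\to\infty}\io\nep\le|\Omega|(r_+/\mu)^{1/(\gamma-1)}$, feed this into a quasi-energy functional that starts a bootstrap of higher integrability of $\nep$, and finally invoke Schauder estimates plus Arzel\`a--Ascoli to identify the smooth limit on $[T,\infty)$ with the generalized solution. That scaffolding is correct. However, there are two concrete gaps.

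First, you misidentify where the Navier--Stokes coupling actually bites. You worry about closing the estimate for $\io\nep^p$ because of the term $\uep\cdot\nabla\nep$. But when one tests the first equation with $\nep^{p-1}$, that term gives $\frac1p\io\uep\cdot\nabla\nep^p=-\frac1p\io(\nabla\cdot\uep)\nep^p=0$ by incompressibility, so it never enters the $n$-estimate. The genuine difficulty sits in the estimate for $\io|\nabla\cep|^2$: testing the second equation with $\Delta\cep$ produces $\io\uep\cdot\nabla\cep\,\Delta\cep$, which does not vanish and must be absorbed. The paper handles it by proving an $\eps$-independent bound $\int_1^T\io|\mathcal{A}^{(\delta+1)/2}\uep|^2\le C$ with $\delta=\min\{1/2,\gamma-1\}$ (Lemma~\ref{lem5.3}), via testing the projected momentum equation with $\mathcal{A}^\delta\uep$ and interpolation between fractional powers of the Stokes operator. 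In two dimensions, this yields in effect $\uep\in L^2((1,T);L^\infty(\Omega))$, so that $\io|\uep\cdot\nabla\cep|^2\le\|\uep\|^2_{L^\infty}\|\nabla\cep\|^2_{L^2}$ has an $L^1$-in-time coefficient. Without something of this nature your sketch cannot close the $\nabla\cep$-estimate: the energy bound $\uep\in L^2_t W^{1,2}$ from \eqref{solnreg} plus smallness of $\|\uep\|_{L^2}$ is not enough, since it gives only $\uep\in L^4_{t,x}$, which is too weak against $\|\nabla\cep\|^2_{L^2}$.

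Second, your proposed autonomous differential inequality $y'\le-\delta y+Cy^{1+\theta}$ is the wrong ODI structure here and, more importantly, would need a uniform-in-time bound on the coefficient $C$, which for you would essentially contain $\|\uep\|^2_{L^\infty}$ or $\|\Delta\cep\|^2_{L^2}$ and risk circularity. The paper instead works with the non-autonomous inequality $y'(t)+h(t)\le a(t)y(t)+b(t)$ with $a,b$ merely $L^1$ in time (Lemma~\ref{ODI}); this is exactly what the $L^2_t$ bound on $\mathcal{A}^{(\delta+1)/2}\uep$ and the $L^2_t L^{2/(3-\gamma)}_x$ bound on $\nep$ supply, and it completely sidesteps the need for a ``small stays small'' principle. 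The paper's quasi-energy is also $\io\nep\ln\nep+\tfrac12\io|\nabla\cep|^2$ rather than your $\io\nep^p+\io|\nabla\cep|^{2q}+\|\uep\|^2_{L^2}$; the $\nep\ln\nep$-functional pairs with the Gagliardo--Nirenberg inequality for $\|\sqrt{\nep}\|^4_{L^4}$ so that the coefficient of $\io|\nabla\sqrt{\nep}|^2$ involves $\io\nep$, which is precisely the quantity made small by the hypothesis $\mu>\mu_0 r_+$, while $\uep$ need not appear in the functional at all. Once the $L^p$ cascade for $\nep$ is launched, your endgame (semigroup smoothing, Schauder, Arzel\`a--Ascoli, identification with the generalized solution) coincides with the paper's.
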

\begin{remark}
For the fluid-free system it has recently been shown that for $γ>1$ (more generally, for $γ\ge 2-\f2d$) and sufficiently large $\f{μ^2}{r^{3-γ}}$, solutions $(n,c)$ essentially converge to the spatially homogeneous equilibrium in $\Lom1\times\Lom2$ as $t\to\infty$, \cite{win_ANS}. The study relies on a Lyapunov functional in which, when evaluated for solutions of \eqref{q1}, all fluid-terms vanish immediately. It is therefore most likely to be expected that solutions of \eqref{q1} have the same property.
\end{remark}

\section{The solution concept}
While the notion of solution for the first equation in \eqref{q1} is somewhat more involved, the second and third equation can be understood in a rather usual weak sense.
In order to indicate the space to which $\mathbf{u}$ belongs, we introduce the solenoidal subspace of $L^2(\Omega;ℝ^2)$ as
\[
L_{\sigma}^{2}(\Omega):=\left\{\varphi \in L^{2}(\Omega;\mathbb{R}^2) \mid \nabla \cdot \varphi=0\text { in}~\mathcal{D}'(\Omega)\right\}
\]
and abbreviate $W_{0,\sigma}^{1,2}(\Omega;ℝ^2)=W_0^{1,2}(\Omega;ℝ^2)\cap L^2_{\sigma}(\Omega)$.

\begin{definition} \label{def2}
A pair $(\mathbf{u},n)$ of functions
\begin{equation}
\left\{\begin{array}{l}{ \mathbf{u }\in L_{l o c}^{2}\big([0,\infty) ; W_{0,\sigma}^{1,2}(\Omega;\mathbb{R}^2)\big)\quad} \\
{n\in L_{loc}^{1}\big(\overline{\Omega} \times[0, \infty)\big)}\end{array}\right.
\end{equation}
satisfying $n\ge0$ is said to globally solve the equation
  \begin{eqnarray*}
 \left\{
    \begin{array}{llll}
        \displaystyle \mathbf{u}_{t}+\kappa(\mathbf{u}\cdot\nabla)\mathbf{u}=\Delta \mathbf{u} +\nabla P+ n\nabla\phi, &&x\in\Omega,\,t>0,\\
        \displaystyle \nabla\cdot\mathbf{u}=0, &&x\in\Omega,\,t>0,\\
        \displaystyle \mathbf{u}=0, &&x\in\partial\Omega,\,t>0,\\
        \displaystyle \mathbf{u}(0)=\mathbf{u}_0,&&x\in\Omega\\
    \end{array}
 \right.
\end{eqnarray*}
in the weak sense if
\begin{equation}\label{def22}
-\int_{\Omega}\mathbf{u}_0\varphi(0)
-\int_0^\infty\int_{\Omega}\mathbf{u}\varphi_t
-\kappa \int_{0}^{\infty} \int_{\Omega} \mathbf{u}\otimes\mathbf{u} \cdot \nabla \varphi
=\int_0^\infty\int_{\Omega}n\nabla\phi\cdot\varphi -\int_0^\infty\int_{\Omega}\nabla \mathbf{u}\cdot\nabla\varphi
\end{equation}
holds for every $\varphi\in C_{0}^{\infty}\left((\Omega; \mathbb{R}^{2}) \times[0, \infty)\right)$ with $\nabla \cdot \varphi \equiv 0$ in $\Omega \times(0, \infty)$.
\end{definition}

\begin{definition} \label{def1}
A triple $(n,c,\mathbf{u})$ of functions
\begin{equation}\label{def11}
\left\{\begin{array}{l}{n \in L_{loc}^{1}(\overline{\Omega} \times[0, \infty)),} \\
{c\in L_{l o c}^{2}\big([0, \infty) ; W^{1,2}(\Omega)\big),}\\
{\mathbf{u}\in  L_{l o c}^{2}\big([0, \infty) ; W_{0,\sigma}^{1,2}(\Omega;ℝ^2)\big)}\end{array}\right.
\end{equation}
satisfying $n\ge0$, $c\ge0$ is said to globally solve the problem
  \begin{eqnarray*}
 \left\{
    \begin{array}{llll}
        \displaystyle  c_{t}+\mathbf{u}\cdot\nabla c=\Delta c - c+ n, &&x\in\Omega,\,t>0,\\
        \displaystyle \partial_{\nu}c=0, &&x\in\partial\Omega,\,t>0,\\
        \displaystyle c(0)=c_0,&&x\in\Omega\\
    \end{array}
 \right.
\end{eqnarray*}
in the weak sense if 
\begin{align}\label{def12}
-\int_{\Omega}c_0\varphi(0)-\int_0^\infty\int_{\Omega}c\varphi_t &=-\int_0^\infty\int_{\Omega}\nabla c\cdot\nabla\varphi +\int_0^\infty\int_{\Omega}c\mathbf{u}\cdot\nabla \varphi
+\int_0^\infty\int_{\Omega}n\varphi
-\int_0^\infty\int_{\Omega}c\varphi
\end{align}
holds for every $\varphi\in
C^{\infty}_0(\overline{\Omega}\times[0,\infty))$.
\end{definition}
With the two latter components weakly solving the corresponding equations, the generalized solution  $(n,c,\mathbf{u})$ of \eqref{q1}
is exhibited as $n$ satisfies the first equation of \eqref{q1} in a certain form which was first established in \cite{W2015s}. It combines a logarithmic supersolution property of $n$ (cf. also \cite{LL}) with an upper estimate for its mass.
\begin{definition} \label{def3}
Let a triple $(n,c,\mathbf{u})$ of functions
\begin{equation}
\left\{\begin{array}{l}{n \in L_{loc}^{1}(\overline{\Omega} \times[0, \infty)) ,} \\
{c\in L_{l o c}^{1}\left([0, \infty) ; W^{1,2}(\Omega)\right),} \\
{\mathbf{u}\in L_{l o c}^{1}\big([0,\infty) ; W_0^{1,2}(\Omega;\mathbb{R}^2)\big)}\end{array}\right.
\end{equation}
satisfy $n\ge0$, $c\ge0$ and be such that
\begin{align}
f(n)\in L^1_{loc}(\overline{\Omega}\times[0,\infty)).
\end{align}
Then $(n,c,\mathbf{u})$ will be called a global generalized solution of \eqref{q1} if \eqref{def11}-\eqref{def22} are satisfied,
and if
\begin{align}
\int_{\Omega} n(\cdot, t) \leq \int_{\Omega} n_{0}+\int_{0}^{t} \int_{\Omega} f(n) \quad \text { for }~ a.e.~t>0,
\end{align}
and if
\begin{align}
(n+1)^{-2}|\nabla n|^2\in L^1_{loc}\big(\overline{\Omega}\times[0,\infty)\big),
\end{align}
and if
\begin{align}\label{def32}
&\quad\int_{0}^{\infty} \int_{\Omega}\ln(n+1)\varphi_{t}
+
\int_{\Omega} \ln(n_0+1)\varphi(0)\nonumber\\
 &\le-\int_{0}^{\infty}\int_{\Omega}\ln(n+1) \mathbf{u}\cdot\nabla \varphi
 -\int_{0}^{\infty}\int_{\Omega} |\nabla\ln(n+1)|^2\varphi
 \nonumber\\
 &\quad+\int_{0}^{\infty}\int_{0}^{\infty}\int_{\Omega} (n+1)^{-1}\nabla n\cdot\nabla\varphi
 +\int_{0}^{\infty}\int_{\Omega} (n+1)^{-2} n
 \nabla n\cdot\nabla c\varphi
 \nonumber\\
  &\quad-\int_{0}^{\infty}\int_{\Omega} (n+1)^{-1} n
  \nabla c\cdot\nabla\varphi
 -\int_{0}^{\infty}\int_{\Omega} (n+1)^{-1} f(n)\varphi
\end{align}
holds for each nonnegative function $\varphi\in C^{\infty}_0\big(\overline{\Omega}\times[0,\infty)\big)$.
\end{definition}

Like its famous precursor, the concept of renormalized solutions (see \cite{DiPernaLions}, transferred to the context of chemotaxis systems in \cite{W2015s}), this notion of solvability rests on the idea that it may be easier to derive a priori estimates for or pass to the limit in integrals involving not the solution itself, but a nonlinear transformation thereof, here $\ln(n+1)$ instead of $n$.

\newcommand{\A}{\mathcal{L}}

For the solutions in \cite{W2019,mengyao,WWX21} (dealing with \eqref{q1} in a fluid-free variant or for a Stokes fluid), a combined quantity of the form $\phi(n)\psi(c)$, with $\phi$ being a bounded, decreasing and convex function, was employed.
This additional step further away from classical solvability was not necessary for Definition~\ref{def3} or Theorem~\ref{th 1}. This is because in the two-dimensional case, we can investigate the regularity of solutions more precisely by applying fractional powers of the operator $\A=-\Delta+1$, which allows us to construct the generalized solution closer to a classical one.
Let us focus on the component $c$ and specifically discuss how the above idea is performed: 
When studying the energy development of $c$, the time-space estimates are established by testing
the equation by $c$ or $\A c$ in the existing literature.
But the resulting $L^\infty L^2$-boundedness of $c$ can not give all desired estimates and
the $L^\infty L^2$-boundedness of $\A^{1/2}c$ seems to require the assumption $\gamma\ge2$.
To handle this difficulty, we turn to the test function $\A^{\beta}c$ with a proper exponent
$\beta$ ensuring that the estimates of $\|\A^{(\beta+1)/2}c\|_{L^\infty L^2}$ can be built on the condition $\gamma>1$ and suffice to proceed to the compactness arguments.
Here, we remark that the aforementioned reasoning needs to be performed under the assumption of two-dimensionality due to the appearance of the convection term $\mathbf{u}\cdot \nabla c$.

\section{Solutions of an approximate system}\label{sec3}
In the following, we fix $\kappa=1$, $\lambda>1$, $\sigma\in(0,\frac14)$ (cf. \eqref{initial}), $r\inℝ$, $\mu>0$, $\gamma>1$, $f$ fulfilling \eqref{f} and let $n_0$, $c_0$, $\mathbf{u}_0$ be as in \eqref{initial}. We furthermore introduce a family of functions
\begin{equation}
\left\{\begin{array}{l}{n_{0\varepsilon} \in C^{1}(\overline{\Omega}) \text { with } n_{0\varepsilon} \geq 0 \text { in }\overline{\Omega} \text { and } n_{0\varepsilon} \not \equiv 0,} \\
{c_{0\varepsilon} \in C^{1}(\overline{\Omega}) \cap D((-\Delta+1)^{2\sigma}) ~ \text { with } c_{0\varepsilon} \geq 0 \text { in }\overline{ \Omega}, }\\
{\mathbf{u}_{0\varepsilon}\in C^{1}(\overline{\Omega};\mathbb{R}^2) ~\text { with } \nabla \cdot \mathbf{u}_{0\varepsilon}=0  ~\text {in }\Omega
 \text { and }  \mathbf{u}_{0\varepsilon} =0 \text { on }\partial\Omega}
\end{array}\right.
\end{equation}
satisfying
\begin{align}\label{initial2}
\int_{\Omega}n_{0\varepsilon} \le 2\int_{\Omega}n_0 ~ \text { for each } \varepsilon\in(0,1).
\end{align}
and
\begin{align}\label{initial1}
n_{0\varepsilon} \rightarrow n_0 ~ \text { in } L^{1}(\Omega),
~c_{0\varepsilon} \rightarrow c_0 ~\text { in } D\left((-\Delta+1)^{2\sigma}\right)~ \text { as well as }
~\mathbf{u}_{0\varepsilon} \rightarrow \mathbf{u}_{0}~ \text { in } L^{2}(\Omega)
\end{align}
as $\varepsilon\rightarrow0$, where $-\Delta$ stands for the Neumann-Laplacian in $L^2(\Omega)$, see \eqref{A1}.\\ 

We then intend to construct the generalized solution whose existence Theorem~\ref{th 1} claims as limit of a sequence of solutions of the approximate system
  \begin{eqnarray} \label{qapp}
 \left\{
    \begin{array}{llll}
       \displaystyle n_{\varepsilon t}+\mathbf{u}_\varepsilon\cdot\nabla n_\varepsilon=\Delta n_\varepsilon-\nabla \cdot(n_\varepsilon\nabla c_\varepsilon)+f(n_\varepsilon)-\varepsilon n_\varepsilon^{2}, &&x\in\Omega,\,t>0,\\
        \displaystyle  c_{\varepsilon t}+\mathbf{u}_\varepsilon\cdot\nabla c_\varepsilon=\Delta c_\varepsilon - c_\varepsilon+ \frac{n_\varepsilon}{1+\varepsilon n_\varepsilon}, &&x\in\Omega,\,t>0,\\
        \displaystyle \mathbf{u}_{\varepsilon t}
        +\kappa(\mathbf{u}_{\varepsilon }\cdot\nabla)\mathbf{u}_{\varepsilon }
        =\Delta \mathbf{u}_\varepsilon +\nabla P_\varepsilon+ n_\varepsilon\nabla\phi, &&x\in\Omega,\,t>0,\\
        \displaystyle \nabla\cdot\mathbf{u}_\varepsilon=0,&&x\in\Omega,\,t>0\\
        \displaystyle \partial_{\nu} n_{\varepsilon}=\partial_{\nu} c_{\varepsilon}=0,  \mathbf{u}_{\varepsilon}=0, & & x \in \partial \Omega, t>0,\\
        \displaystyle n_{\varepsilon}(x, 0)=n_{0\varepsilon}(x),c_{\varepsilon}(x, 0)=c_{0\varepsilon}(x),
         \mathbf{u}_{\varepsilon}(x, 0)=\mathbf{u}_{0\varepsilon}(x), && x \in \Omega,\\
    \end{array}
 \right.
\end{eqnarray}
The modifications in its first two equations (if compared to \eqref{q1}) ensure global classical solvability:
\begin{lemma}\label{lem2.1}
Let $\varepsilon>0$. Then there exist functions $(n_\varepsilon,c_\varepsilon,\mathbf{u}_\varepsilon)$ satisfying
$n_\varepsilon,c_\varepsilon\ge 0$ in $\overline{\Omega} \times[0, \infty)$ and
\begin{equation}
\left\{\begin{array}{l}
{n_{\varepsilon} \in C^{0}(\overline{\Omega} \times[0, \infty)) \cap C^{2,1}(\Ombar\times(0, \infty)), } \\
{c_{\varepsilon} \in C^{0}(\overline{\Omega} \times[0, \infty))\cap C([0,\infty);D((-\Delta+1)^{2\sigma})) \cap C^{2,1}(\Ombar\times(0, \infty)),} \\
{\mathbf{u}_{\varepsilon} \in C^{0}\big((\overline{\Omega};\mathbb{R}^2) \times[0, \infty)\big)
\cap C^{2,1}\big((\Ombar;\mathbb{R}^2)\times(0, \infty)\big),}\end{array}\right.
\end{equation}
together with some $P_\varepsilon\in C^{1,0}(\overline{\Omega} \times(0, \infty))$ such that $(n_\varepsilon,c_\varepsilon,\mathbf{u}_\varepsilon,P_\varepsilon)$ is a classical and global solution of \eqref{qapp}. This solution is unique within the indicated class, up to addition of spatially constant functions to $P_{\eps }$.
\end{lemma}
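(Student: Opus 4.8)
~The plan is to follow the standard scheme for chemotaxis--fluid systems (cf.\ \cite{TW2016} for a closely related problem), making essential use of the two $\eps$-regularizations built into \eqref{qapp}: the saturated production term $\f{\nep}{1+\eps\nep}\le\f{1}{\eps}$ and the super-quadratic absorption $-\eps\nep^2$. I would first rewrite the three equations in their Duhamel forms relative to the Neumann heat semigroup $(\e^{t\Delta})_{t\ge0}$ on $\Lom2$ and to the Stokes operator $A$ on $L^2_{\sigma}(\Omega)$, writing the convective terms $\uep\cdot\nabla\nep$ and $\uep\cdot\nabla\cep$ in divergence form thanks to $\nabla\cdot\uep=0$, and then run a contraction-mapping argument on a closed ball in $C([0,T];C^0(\Ombar))\times C([0,T];W^{1,q}(\Omega))\times C([0,T];D(A^\theta))$ with fixed $q>2$, $\theta\in(\frac12,1)$ and small $T>0$. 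A routine continuation procedure turns the resulting local solution into a maximal one on $[0,T_{\max})$ with $T_{\max}=T_{\max}(\eps)\in(0,\infty]$, subject to the dichotomy that either $T_{\max}=\infty$ or
\[
\|\nep(\cdot,t)\|_{\Lom\infty}+\|\cep(\cdot,t)\|_{W^{1,q}(\Omega)}+\|A^\theta\uep(\cdot,t)\|_{\Lom2}\longrightarrow\infty\quad\text{as } t\nearrow T_{\max}.
\]
Parabolic and Stokes regularity theory then produce the asserted $C^{2,1}$-smoothness on $\Ombar\times(0,\infty)$ together with the pressure $P_\eps\in C^{1,0}$, and continuity of $\cep$ in $D((-\Delta+1)^{2\sigma})$ down to $t=0$ follows from that of the corresponding semigroup terms; nonnegativity of $\nep$ and $\cep$ is a consequence of the parabolic comparison principle, once one writes $f(\nep)-\eps\nep^2=\nep\,g_\eps(\nep)$ with $g_\eps(s):=\f{f(s)}{s}-\eps s$ for $s>0$ and $g_\eps(0):=f'(0)$ — continuous precisely because $f(0)=0$ — which casts the first equation in a form amenable to comparison, the second equation being handled analogously since $\f{\nep}{1+\eps\nep}\ge0$ whenever $\nep\ge0$.

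To exclude the blow-up alternative I would establish, for every $T<T_{\max}$, a chain of a priori bounds on $[0,T)$. Integrating the first equation and using $\nabla\cdot\uep=0$, the homogeneous Neumann conditions and $f(s)\le rs$, the total mass $y(t):=\io\nep(\cdot,t)$ obeys $y'\le ry-\f{\eps}{|\Omega|}y^2$ by the Cauchy--Schwarz inequality, whence $y(t)\le\max\{\io n_{0\eps},\f{r_+|\Omega|}{\eps}\}$ and, in particular, $\eps\int_0^T\io\nep^2<\infty$; and since $0\le\f{\nep}{1+\eps\nep}\le\f{1}{\eps}$, the comparison principle applied to the second equation — the solenoidal drift $\uep$ being harmless — gives $\|\cep(\cdot,t)\|_{\Lom\infty}\le\max\{\|c_{0\eps}\|_{\Lom\infty},\f{1}{\eps}\}$. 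Starting from these two bounds: an $L^2$-energy estimate for $\uep$, in which the convective term vanishes because $\nabla\cdot\uep=0$ and which is fed by the space-time $L^2$-bound on $\nep$, yields $\uep\in L^\infty((0,T);\Lom2)\cap L^2((0,T);W_{0,\sigma}^{1,2}(\Omega;ℝ^2))$; a smoothing estimate for $(\e^{t(\Delta-1)})_{t\ge0}$ in the second equation, exploiting the $L^\infty$-bound on $\cep$ and hence the $L^2$-bound on $\cep\uep$, then gives $\nabla\cep\in L^\infty((0,T);\Lom4)$; testing the first equation by $\nep$ and using the two-dimensional Gagliardo--Nirenberg inequality together with a Gronwall argument produces $\nep\in L^\infty((0,T);\Lom2)$; an $L^p$-iteration, in which $-\eps\int_\Omega\nep^{p+1}$ absorbs via Young's inequality both the contribution of $f(\nep)$ and that of the cross-diffusive term, improves this to $\nep\in L^\infty((0,T);\Lom p)$ for every $p<\infty$ and then, through a final variation-of-constants estimate for the first equation, to $\nep\in\Lom\infty$ uniformly on $[0,T]$; and a concluding round of parabolic and Stokes regularity upgrades the bounds on $\cep$ and $\uep$ to the $W^{1,q}$- and $D(A^\theta)$-norms appearing in the extensibility criterion. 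Since all these bounds are finite for each $T<T_{\max}$, the dichotomy forces $T_{\max}=\infty$.

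For uniqueness within the indicated class, I would subtract two such solutions and test the equations for the differences of the $n$-, $c$- and $\mathbf{u}$-components against those differences — for the $c$-component rather against $(-\Delta+1)$ applied to the difference, in order to work in $W^{1,2}(\Omega)$ — the pressure gradient disappearing upon integration against the solenoidal velocity difference. Using the boundedness of each individual solution on compact subintervals of $(0,\infty)$, extended down to $t=0$ by continuity, together with the two-dimensional Gagliardo--Nirenberg inequality to control all coupling terms, one reaches a Gronwall inequality for the sum of the squared $\Lom2$-norms of the $n$- and $\mathbf{u}$-differences and the squared $W^{1,2}(\Omega)$-norm of the $c$-difference, with locally integrable coefficient and vanishing initial value; this enforces coincidence, and the pressures then agree up to addition of a spatially constant function. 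The principal effort lies in the bootstrap of the second step: because the fluid obeys the full Navier--Stokes rather than the Stokes equations, the $\uep$-estimates must be extracted from $L^2$-energy information and two-dimensional Sobolev embeddings alone, and the three a priori bounds have to be advanced in a mutually consistent order — which is routine for $d=2$ but demands careful bookkeeping of the $\eps$-dependent constants.
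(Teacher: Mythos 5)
The paper does not actually prove Lemma~\ref{lem2.1}; it delegates entirely to \cite{TW2016} and \cite[Lemma 2.1]{WinCPDE}. Your proposal reconstructs, in outline, precisely the scheme those references use — fixed-point local existence, an extensibility criterion, a chain of $\eps$-dependent a priori bounds exploiting the saturated production term $\f{\nep}{1+\eps\nep}\le\f1\eps$ and the extra damping $-\eps\nep^2$, followed by parabolic and Stokes regularity bootstraps — so at the level of strategy you are doing the same thing the paper relies on.

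One link of your bootstrap chain, however, does not hold as stated. You claim that "a smoothing estimate for $(\e^{t(\Delta-1)})_{t\ge0}$ in the second equation, exploiting the $L^\infty$-bound on $\cep$ and hence the $L^2$-bound on $\cep\uep$, then gives $\nabla\cep\in L^\infty((0,T);\Lom4)$." In the Duhamel formula the convective contribution is $\int_0^t\nabla\e^{(t-s)(\Delta-1)}\nabla\cdot\big(\cep\uep\big)(s)\,ds$, and with only $\cep\uep\in L^\infty((0,T);\Lom2)$ the semigroup estimate produces a kernel singularity of order $(t-s)^{-1-\frac12(\frac12-\frac14)}=(t-s)^{-9/8}$, which is not integrable; even after upgrading $\cep\uep$ to $L^4((0,T);\Lom4)$ via Ladyzhenskaya one is stuck with the borderline $(t-s)^{-1}$ coming from the two gradients. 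Because only $L^2$-energy information on $\uep$ is available at this stage and the fluid obeys the full Navier–Stokes equations, the step that actually works (as in \cite[Lemma 3.2]{TW2016}) is a testing argument: multiply the $\cep$-equation by $-\Delta\cep$, bound $\int(\uep\cdot\nabla\cep)\Delta\cep$ via the two-dimensional Gagliardo–Nirenberg inequality and absorb, then close by Gronwall using $\norm[\Lom2]{\nabla\uep}^2\norm[\Lom2]{\uep}^2\in L^1(0,T)$. That yields $\nabla\cep\in L^\infty((0,T);\Lom2)$ and $\Delta\cep\in L^2((0,T);\Lom2)$, which is enough to run your $L^2$- and then $L^p$-estimates for $\nep$. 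With this replacement your proposal is in line with the construction the paper invokes; the rest — the factorization $f(s)-\eps s^2=s\,g_\eps(s)$ for positivity (here $f(0)=0$ is genuinely used), the mass comparison $y'\le ry-\f{\eps}{|\Omega|}y^2$, the $L^\infty$-bound on $\cep$ by $\max\{\norm[\Lom\infty]{c_{0\eps}},\f1\eps\}$, and the Gronwall uniqueness argument — is correct.
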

\begin{proof}
 This is covered by the setting of \cite{TW2016}, with local existence and regularity proven along the lines of \cite[Lemma 2.1]{WinCPDE}.
\end{proof}

Given any $\eps >0$, by $(n_{\eps },c_{\eps },\mathbf{u}_{\eps },P_{\eps })$ we refer to this solution. In a first step we ensure that it satisfies an integral identity resembling that of Definition~\ref{def3}:

\begin{lemma}\label{lem2.2}
Let $\eps >0$. Then for any $\varphi\in C^{\infty}(\overline{\Omega}\times(0,\infty))$, we have
\begin{align}\label{a0}
\quad\int_{\Omega}\partial_{t}\ln\left(n_{\varepsilon}+1\right)\varphi
 &=\int_{\Omega}|\nabla \ln(n_{\varepsilon}+1)|^2\varphi
-\int_{\Omega}\left(n_{\varepsilon}+1\right)^{-1} \nabla n_\varepsilon \cdot\nabla\varphi
 \nonumber\\
 &\quad+\int_{\Omega}\ln\left(n_{\varepsilon}+1\right)\mathbf{u}_\varepsilon\cdot\nabla \varphi
 -\int_{\Omega}\left(n_{\varepsilon}+1\right)^{-2}n_\varepsilon
\nabla n_\varepsilon\cdot\nabla c_\varepsilon \varphi\nonumber\\
  &\quad+\int_{\Omega}\left(n_{\varepsilon}+1\right)^{-1} n_\varepsilon \nabla c_\varepsilon \cdot\nabla\varphi
 +\int_{\Omega}\left(n_{\varepsilon}+1\right)^{-1}
    f(n_\varepsilon)\varphi\nonumber\\
  &\quad-\varepsilon\int_{\Omega}\left(n_{\varepsilon}+1\right)^{-1}
  n^2_{\varepsilon}\varphi
  \qquad\text{ in } (0,\infty).
\end{align}
\end{lemma}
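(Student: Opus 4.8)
The plan is to derive \eqref{a0} by a straightforward computation starting from the first equation of the approximate system \eqref{qapp}, which by Lemma~\ref{lem2.1} holds classically for $(n_\eps,c_\eps,\uep)$ in $\Ombar\times(0,\infty)$. Since $n_\eps\ge0$ is continuous and $n_\eps\in C^{2,1}(\Ombar\times(0,\infty))$, the quantity $\ln(n_\eps+1)$ is well-defined and enjoys the same regularity, so the chain rule may be applied freely. First I would record that
\[
\partial_t\ln(n_\eps+1)=\f{n_{\eps t}}{n_\eps+1}
=\f{1}{n_\eps+1}\Big(\Delta n_\eps-\nabla\cdot(n_\eps\nabla c_\eps)+f(n_\eps)-\eps n_\eps^2-\uep\cdot\nabla n_\eps\Big)
\]
pointwise in $\Ombar\times(0,\infty)$, then multiply by a test function $\varphi\in C^\infty(\Ombar\times(0,\infty))$ and integrate over $\Omega$.

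The core of the argument is the treatment of the three divergence-type terms by integration by parts, using the homogeneous Neumann conditions $\partial_\nu n_\eps=\partial_\nu c_\eps=0$ and $\uep=0$ on $\partial\Omega$, which make all boundary contributions vanish. For the diffusion term I would write $\f{\Delta n_\eps}{n_\eps+1}=\nabla\cdot\big((n_\eps+1)^{-1}\nabla n_\eps\big)+(n_\eps+1)^{-2}|\nabla n_\eps|^2$ and note that $(n_\eps+1)^{-2}|\nabla n_\eps|^2=|\nabla\ln(n_\eps+1)|^2$; integrating the divergence against $\varphi$ produces $-\int_\Omega(n_\eps+1)^{-1}\nabla n_\eps\cdot\nabla\varphi$. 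For the chemotaxis term, $-\f{\nabla\cdot(n_\eps\nabla c_\eps)}{n_\eps+1}=-\nabla\cdot\big((n_\eps+1)^{-1}n_\eps\nabla c_\eps\big)-(n_\eps+1)^{-2}n_\eps\nabla n_\eps\cdot\nabla c_\eps$, so integration against $\varphi$ yields $\int_\Omega(n_\eps+1)^{-1}n_\eps\nabla c_\eps\cdot\nabla\varphi-\int_\Omega(n_\eps+1)^{-2}n_\eps\nabla n_\eps\cdot\nabla c_\eps\,\varphi$. For the convective term I would use $\nabla\cdot\uep=0$ to write $-\f{\uep\cdot\nabla n_\eps}{n_\eps+1}=-\uep\cdot\nabla\ln(n_\eps+1)=-\nabla\cdot\big(\ln(n_\eps+1)\uep\big)$, whose integral against $\varphi$ equals $\int_\Omega\ln(n_\eps+1)\uep\cdot\nabla\varphi$. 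The remaining two zeroth-order terms $\f{f(n_\eps)}{n_\eps+1}$ and $-\f{\eps n_\eps^2}{n_\eps+1}$ are simply multiplied by $\varphi$ and integrated, producing the last line of \eqref{a0}. Collecting all contributions gives exactly \eqref{a0}.

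I do not anticipate a genuine obstacle here; the only point requiring a word of care is the justification that each integrand is locally integrable so that the integrations by parts are legitimate --- but this is immediate from the stated regularity, since on any fixed time slice $t>0$ all of $n_\eps$, $\nabla n_\eps$, $c_\eps$, $\nabla c_\eps$, $\uep$, $f(n_\eps)$ are continuous on $\Ombar$, hence bounded, and $(n_\eps+1)^{-1}\le1$. Thus the identity \eqref{a0} is a pointwise-in-$t$ statement obtained by testing the classical PDE with $(n_\eps+1)^{-1}\varphi$ and rearranging via the chain rule and the divergence theorem.
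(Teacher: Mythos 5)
Your proof is correct and follows essentially the same route as the paper: apply the chain rule to $\ln(n_\eps+1)$, insert the classical first equation of \eqref{qapp}, and integrate the diffusion, chemotaxis and convection terms by parts using $\partial_\nu n_\eps=\partial_\nu c_\eps=0$, $\uep|_{\partial\Omega}=0$ and $\nabla\cdot\uep=0$. The only (cosmetic) difference is that the paper packages this computation into the more general Lemma~\ref{lem:testing} for arbitrary $F\in C^2((0,\infty))$ and then substitutes $F(s)=\ln s$, which lets the authors reuse the same identity later with $F(s)=1/s$ in Lemma~\ref{lem3.11}; your direct computation for the logarithm is an equally valid instance of that argument.
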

In lieu of proof we give the following more general lemma, from which Lemma~\ref{lem2.2} results upon inserting $F(s)=\ln s$, $F'(s)=\f1s$ and $-F''(s)=\f1{s^2}$.
\begin{lemma}\label{lem:testing}
 Let $F\in C^2((0,\infty))$. Then for every $\eps >0$ and $φ\in C^\infty(\Ombar\times(0,\infty))$,
 \begin{align*}
  \partial_t\io F(\nep+1)φ =& -\io F''(\nep+1)|\nabla\nep|^2 φ - \io F'(\nep +1)\nabla \nep\cdot\nablaφ +\io F(\nep+1)\uep\cdot\nablaφ \\
  &+\io F''(\nep+1)\nep\nabla\nep\cdot\nabla\cepφ +\io F'(\nep+1)\nep\nabla\cep\cdot\nablaφ\\
  &+\io F'(\nep+1)f(\nep)φ - \eps \io \nep^2F'(\nep+1)φ \qquad \text{in } (0,\infty).
 \end{align*}
\end{lemma}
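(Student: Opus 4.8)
The plan is to carry out a direct computation resting on the classical regularity of $(\nep,\cep,\uep)$ provided by Lemma~\ref{lem2.1}, on the first equation of \eqref{qapp}, and on integration by parts; I do not expect any substantial difficulty, and the case $F(s)=\ln s$ then immediately yields Lemma~\ref{lem2.2}.

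First I would fix $\eps>0$ and record that, since $\nep\ge0$ on $\Ombar\times[0,\infty)$, the argument $\nep+1$ takes values in $[1,\infty)$ and, on each set $\Ombar\times[\delta,T]$ with $0<\delta<T$, within a compact subinterval of $(0,\infty)$. Because $F\in C^2((0,\infty))$ and $\nep,\cep,\uep\in C^{2,1}(\Ombar\times(0,\infty))$, the composites $F(\nep+1)$, $F'(\nep+1)$, $F''(\nep+1)$ and all products occurring in the statement are continuous on $\Ombar\times(0,\infty)$, with continuous spatial gradients and continuous first time derivative there. This legitimizes differentiating $F(\nep+1)$ in time pointwise and integrating the result against $\varphi$ over $\Omega$, as well as the integrations by parts performed below, so that the asserted equality is an identity between continuous functions on $(0,\infty)$.

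Next, by the chain rule and the first equation of \eqref{qapp},
\[
\partial_t F(\nep+1)=F'(\nep+1)\,\partial_t\nep=F'(\nep+1)\Bigl(\Delta\nep-\nabla\cdot(\nep\nabla\cep)+f(\nep)-\eps\nep^2-\uep\cdot\nabla\nep\Bigr),
\]
so that after multiplying by $\varphi$ and integrating over $\Omega$ the contributions carrying $f(\nep)$ and $\eps\nep^2$ already appear in the desired form. For the diffusion term I would integrate by parts, drop the boundary integral by virtue of $\partial_\nu\nep=0$ on $\partial\Omega$, and use $\nabla\bigl(F'(\nep+1)\varphi\bigr)=F''(\nep+1)\varphi\,\nabla\nep+F'(\nep+1)\nabla\varphi$, obtaining $-\io F''(\nep+1)|\nabla\nep|^2\varphi-\io F'(\nep+1)\nabla\nep\cdot\nabla\varphi$. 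For the cross-diffusive term $-\io F'(\nep+1)\varphi\,\nabla\cdot(\nep\nabla\cep)$ the same manipulation, now using $\partial_\nu\cep=0$, gives $\io F''(\nep+1)\nep\,\nabla\nep\cdot\nabla\cep\,\varphi+\io F'(\nep+1)\nep\,\nabla\cep\cdot\nabla\varphi$. For the convective term I would rewrite $F'(\nep+1)\nabla\nep=\nabla F(\nep+1)$, so that $-\io F'(\nep+1)\varphi\,\uep\cdot\nabla\nep=-\io\varphi\,\uep\cdot\nabla F(\nep+1)$, and integrate by parts, using $\uep=0$ on $\partial\Omega$ to discard the boundary term and $\nabla\cdot\uep=0$ in $\Omega$ to reduce $\nabla\cdot(\varphi\uep)$ to $\uep\cdot\nabla\varphi$; this converts the expression into $\io F(\nep+1)\,\uep\cdot\nabla\varphi$. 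Summing all contributions reproduces the claimed identity.

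I do not anticipate a genuine obstacle here, the computation being entirely routine. The only points deserving (minor) care are the justification of the differentiation under the integral sign and of each integration by parts — which is precisely where the $C^{2,1}(\Ombar\times(0,\infty))$-regularity from Lemma~\ref{lem2.1} and the fact that $\nep+1$ stays within a compact subset of the domain $(0,\infty)$ of $F$ are used — and keeping track of which boundary condition, namely $\partial_\nu\nep=0$, $\partial_\nu\cep=0$, or $\uep|_{\partial\Omega}=0$ in combination with $\nabla\cdot\uep=0$, is responsible for the vanishing of each boundary integral.
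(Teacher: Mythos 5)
Your proof follows exactly the same route as the paper's: apply the chain rule together with the first equation of \eqref{qapp}, then integrate by parts term by term, using $\partial_\nu\nep=0$, $\partial_\nu\cep=0$, and $\uep|_{\partial\Omega}=0$ together with $\nabla\cdot\uep=0$ to discard the boundary contributions. Your computation is correct (and in fact written more carefully than \eqref{a2'}--\eqref{a4}, which contain a couple of typographical slips, e.g.\ $(\nep+1)^{-1}$ in place of $F'(\nep+1)$ in \eqref{a2'} and a spurious minus sign in \eqref{a4}, that do not affect the stated lemma).
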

\begin{proof}
Applying the chain rule and inserting $\eqref{qapp}_1$ gives that
\begin{align}\label{a1}
φ\partial_{t}F\left(n_{\varepsilon}+1\right)
=φF'\left(n_{\varepsilon}+1\right)\Big(\Delta n_\varepsilon-\mathbf{u}_\varepsilon\cdot\nabla n_\varepsilon-\nabla \cdot(n_\varepsilon\nabla c_\varepsilon)+f(n_\varepsilon)-\varepsilon n^2_\varepsilon\Big)
\end{align}
in $\Omega\times(0,\infty)$.
Integration by parts shows that
\begin{align}\label{a3}
\int_{\Omega}F'\left(n_{\varepsilon}+1\right) \Delta n_\varepsilon\varphi
&=-\int_{\Omega}F''\left(n_{\varepsilon}+1\right) |\nabla n_{\varepsilon}|^2\varphi-\int_{\Omega}F'\left(n_{\varepsilon}+1\right) \nabla n_\varepsilon \cdot\nabla\varphi
\end{align}
and, since $\nabla\cdot\mathbf{u}_\varepsilon=0$ in $\Omega\times(0,\infty)$,
\begin{align}\label{a2'}
-\int_{\Omega}\left(n_{\varepsilon}+1\right)^{-1} \mathbf{u}_\varepsilon\cdot\nabla n_\varepsilon\varphi
&=-\int_{\Omega}\mathbf{u}_\varepsilon
\cdot\nabla F\left(n_{\varepsilon}+1\right) \varphi
=\int_{\Omega} F\left(n_{\varepsilon}+1\right) \mathbf{u}_\varepsilon\cdot\nabla\varphi,
\end{align}
hold true in $(0,\infty)$, as does
\begin{align}\label{a4}
-\int_{\Omega}F'\left(n_{\varepsilon}+1\right)\nabla \cdot(n_\varepsilon\nabla c_\varepsilon) \varphi
&=-\int_{\Omega}F''\left(n_{\varepsilon}+1\right)n_\varepsilon
\nabla n_\varepsilon\cdot\nabla c_\varepsilon \varphi
+\int_{\Omega}F'\left(n_{\varepsilon}+1\right) n_\varepsilon \nabla c_\varepsilon \cdot\nabla\varphi.
\end{align}
Integrating \eqref{a1} over $\Omega$ and inserting \eqref{a2'}, \eqref{a3} and \eqref{a4}, we conclude the proof.
\end{proof}

\section{Uniform estimates}\label{sec:estimates}

In this first step of the proof we derive the following integrability properties of $n_\varepsilon,f(n_\varepsilon)$.
This result will serve as fundamental ingredient for the forthcoming estimates.

\begin{lemma}\label{lem3.1}
Let $T>0$. Then there exists $C = C(T) > 0$ ensuring
\begin{align}\label{c1}
\int_{0}^{T} \int_{\Omega}|f(n_{\varepsilon})|\leq C
 \quad for~all~\varepsilon\in(0,1).
\end{align}
Moreover, we have
\begin{equation}\label{c2}
\sup_{\varepsilon\in(0,1)}\sup_{t\ge0}\int_{\Omega} n_\varepsilon(\cdot, t) <\infty
\end{equation}
and
\begin{equation}\label{c2'}
\int_{\Omega} n_\varepsilon(\cdot, t) \leq \int_{\Omega} n_{0,\varepsilon}+\int_{0}^{t} \int_{\Omega} f(n_\varepsilon)\quad for~all~t>0~\text{and}~\varepsilon\in(0,1).
\end{equation}
\end{lemma}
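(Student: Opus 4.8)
The plan is to reduce everything to a differential (in)equality for the total mass $\io\nep(\cdot,t)$, exactly as in the standard ``logistic damping controls the mass'' argument. First I would integrate the first equation of \eqref{qapp} over $\Omega$: since $\partial_\nu\nep=0$ on $\partial\Omega$, one has $\io\Delta\nep=0$ and $\io\nabla\cdot(\nep\nabla\cep)=0$, and because $\nabla\cdot\uep=0$ with $\uep=0$ on $\partial\Omega$ we get $\io\uep\cdot\nabla\nep=\io\nabla\cdot(\nep\uep)=0$; hence all diffusion, chemotaxis and transport contributions vanish and
\[
\frac{d}{dt}\io\nep(\cdot,t)=\io f(\nep)-\eps\io\nep^2\qquad\text{for all }t>0.
\]
By Lemma~\ref{lem2.1}, $t\mapsto\io\nep(\cdot,t)$ is continuous on $[0,\infty)$ and $C^1$ on $(0,\infty)$, so integrating this identity and discarding the nonnegative term $\eps\int_0^t\io\nep^2$ yields \eqref{c2'} at once. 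Rearranged, the same identity gives $\int_0^t\io f(\nep)=\io\nep(\cdot,t)-\io n_{0\eps}+\eps\int_0^t\io\nep^2\ge-\io n_{0\eps}$, a lower bound on $\int_0^t\io f(\nep)$ that I will use later to control the negative part of $f(\nep)$ — note that \eqref{f} provides no pointwise lower bound for $f$, so this is where it has to come from.

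Next I would insert the hypothesis \eqref{f}. Young's inequality furnishes $c_1=c_1(r,\mu,\gamma)>0$ with $rs\le\frac{\mu}{2}s^\gamma+c_1$ for all $s\ge0$, so $f(\nep)\le rn_\eps-\mu\nep^\gamma\le-\frac{\mu}{2}\nep^\gamma+c_1$ pointwise and therefore
\[
\frac{d}{dt}\io\nep(\cdot,t)\le-\frac{\mu}{2}\io\nep^\gamma+c_1|\Omega|\qquad\text{for all }t>0.
\]
Integrating over $(0,T)$ and using \eqref{initial2} gives the uniform spacetime bound $\frac{\mu}{2}\int_0^T\io\nep^\gamma\le\io n_{0\eps}+c_1|\Omega|T\le 2\io n_0+c_1|\Omega|T=:C_2(T)$. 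For \eqref{c2}, Jensen's inequality gives $\io\nep^\gamma\ge|\Omega|^{1-\gamma}\big(\io\nep\big)^\gamma$, so $y_\eps(t):=\io\nep(\cdot,t)$ satisfies $y_\eps'\le-ay_\eps^\gamma+b$ with $a=\frac{\mu}{2}|\Omega|^{1-\gamma}$ and $b=c_1|\Omega|$ both independent of $\eps$; an elementary comparison argument for this scalar ODE (if $y_\eps$ ever exceeds $(b/a)^{1/\gamma}$ then $y_\eps'<0$ there) shows $y_\eps(t)\le\max\{y_\eps(0),(b/a)^{1/\gamma}\}$ for all $t\ge0$, and since $y_\eps(0)=\io n_{0\eps}\le 2\io n_0$ by \eqref{initial2}, this is \eqref{c2}.

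Finally, for \eqref{c1} I would split $|f(\nep)|=f_+(\nep)+f_-(\nep)$ into positive and negative parts. For the positive part, \eqref{f} gives $f_+(s)\le(rs-\mu s^\gamma)_+\le|r|s\le\frac{|r|}{\gamma}s^\gamma+|r|$ (Young again), whence $\int_0^T\io f_+(\nep)\le\frac{|r|}{\gamma}\cdot\frac{2}{\mu}C_2(T)+|r||\Omega|T=:C_3(T)$ by the spacetime $L^\gamma$-bound from the previous step. For the negative part I would use $f_-=f_+-f$ together with the lower bound on $\int_0^T\io f(\nep)$ recorded above, obtaining $\int_0^T\io f_-(\nep)=\int_0^T\io f_+(\nep)-\int_0^T\io f(\nep)\le C_3(T)+\io n_{0\eps}\le C_3(T)+2\io n_0$. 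Adding the two estimates proves \eqref{c1}. I do not anticipate a genuine obstacle here; the only points requiring a little care are that the regularizing term $-\eps\nep^2$ carries the favourable sign (so it is dropped in the upper estimates but must be retained when extracting the lower bound for $\int f(\nep)$), and that, \eqref{f} bounding $f$ only from above, the negative part of $f(\nep)$ must be controlled via the mass identity rather than pointwise — uniformity in $\eps$ then being automatic from \eqref{initial2}.
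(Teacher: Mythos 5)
Your proposal is correct and takes essentially the same route as the paper, which simply delegates to an integration of $\eqref{qapp}_1$ over $\Omega$ as in \cite[Lemma 4.1]{W2019}, noting that the fluid convection term vanishes since $\nabla\cdot\uep=0$ and $\uep=0$ on $\partial\Omega$; your version merely fills in the details of the logistic-mass argument with the specific power $\gamma$. One small slip in attribution: the vanishing of $\io\nabla\cdot(\nep\nabla\cep)$ rests on $\partial_\nu\cep=0$ on $\partial\Omega$, not on $\partial_\nu\nep=0$, though this has no bearing on the validity of the argument.
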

\begin{proof}
This results from an integration of \eqref{qapp}$_1$ in exactly the same way as in \cite[Lemma 4.1]{W2019}, since $\nabla\cdot\mathbf{u}_\varepsilon=0$ in $\Omega\times(0, T)$ and $\mathbf{u}_\varepsilon=0$ on $\partial\Omega\times(0, T)$ make the only integral containing the fluid term vanish.
\end{proof}

Based on the structural assumption on $f$, we use an interpolation inequality to state the following result as a direct application of Lemma \ref{lem3.1}.

\begin{lemma}\label{lem3.2}
Let $T>0$. There exists $C = C(T) > 0$ such that
\begin{align}\label{c3}
\| n_{\varepsilon}\|_{L^\gamma((0,T);L^\gamma(\Omega))}\leq C
 \quad for~all~\varepsilon\in(0,1).
\end{align}
Moreover, for any $p,q>1$ satisfying $p\le\gamma$ and $\frac{1}{p}+\frac{\gamma-1}{q}\ge1$, we can find a positive constant $C=C(p,q,T)$ such that
\begin{equation}\label{c4}
\| n_{\varepsilon}\|_{L^q((0,T);L^p(\Omega))}\le C\quad for~all~\varepsilon\in(0,1).
\end{equation}
\end{lemma}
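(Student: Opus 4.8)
The plan is to combine the two pieces of information already at hand from Lemma~\ref{lem3.1}: the uniform $L^1$-in-space bound on $n_\eps$ (that is, \eqref{c2}) and the uniform control on $\int_0^T\int_\Omega |f(n_\eps)|$ (that is, \eqref{c1}). The key observation is that the structural assumption $f(s)\le rs-\mu s^\gamma$ together with $f(0)=0$ and $f\in C^1$ forces a two-sided estimate of the form $\mu s^\gamma \le |f(s)| + rs \le |f(s)| + c_1 s$ wherever $f(s)\le 0$, and a complementary crude bound $\mu s^\gamma \le r s$ (hence $s\le (r/\mu)^{1/(\gamma-1)}$) on the set where $f(s)>0$; in either case one arrives at a pointwise inequality $s^\gamma \le c_2(|f(s)| + s + 1)$ valid for all $s\ge 0$ with a constant $c_2$ depending only on $r,\mu,\gamma$. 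Integrating this over $\Omega\times(0,T)$ and invoking \eqref{c1} and \eqref{c2} yields $\int_0^T\int_\Omega n_\eps^\gamma \le c_2\big(\int_0^T\int_\Omega |f(n_\eps)| + \int_0^T\int_\Omega n_\eps + T|\Omega|\big) \le C(T)$, which is exactly \eqref{c3}.

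For the second assertion I would interpolate the space $L^p(\Omega)$ between $L^1(\Omega)$ and $L^\gamma(\Omega)$: for $1\le p\le\gamma$ one has $\norm{n_\eps(\cdot,t)}_{L^p(\Omega)} \le \norm{n_\eps(\cdot,t)}_{L^1(\Omega)}^{1-\theta}\norm{n_\eps(\cdot,t)}_{L^\gamma(\Omega)}^{\theta}$ with $\theta\in[0,1]$ determined by $\frac1p = (1-\theta) + \frac{\theta}{\gamma}$, i.e. $\theta = \frac{\gamma}{\gamma-1}\cdot\frac{p-1}{p}$. Using \eqref{c2} to bound the $L^1$-factor uniformly, raising to the power $q$, integrating in time, and applying Hölder's inequality in $t$ with the exponent $\frac{\gamma}{q\theta}$ (which is admissible precisely when $q\theta\le\gamma$), I would estimate $\int_0^T \norm{n_\eps(\cdot,t)}_{L^p(\Omega)}^q\,dt \le C\,\big(\int_0^T\norm{n_\eps(\cdot,t)}_{L^\gamma(\Omega)}^\gamma\,dt\big)^{q\theta/\gamma}\cdot T^{1-q\theta/\gamma}$, and then \eqref{c3} finishes the argument. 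A direct computation shows that the condition $q\theta\le\gamma$ rewrites exactly as $\frac1p+\frac{\gamma-1}{q}\ge 1$, which is the hypothesis imposed in the statement.

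The only mild obstacle is bookkeeping rather than substance: one must check that the interpolation exponent $\theta$ lies in $[0,1]$ under $1<p\le\gamma$ (it does, since then $0\le \frac{p-1}{p}<\frac{\gamma-1}{\gamma}$), and that $q\theta\le\gamma$ is equivalent to the stated inequality on $p,q$ — both are elementary. I would also note the degenerate cases ($\theta=0$, i.e. $p=1$, where the claim is immediate from \eqref{c2}, and $q\theta=\gamma$, where Hölder in time is not needed) to make the argument watertight. No compactness or fluid-specific input is required here; the lemma is purely a consequence of Lemma~\ref{lem3.1} and elementary interpolation.
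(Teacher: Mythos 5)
Your argument is correct and follows essentially the same route as the paper: for \eqref{c3} you rearrange $f(s)\le rs-\mu s^\gamma$ into $\mu s^\gamma\le |f(s)|+|r|s$ and invoke \eqref{c1}, \eqref{c2}; for \eqref{c4} you interpolate $L^p(\Omega)$ between $L^1(\Omega)$ and $L^\gamma(\Omega)$ and note that $\frac1p+\frac{\gamma-1}{q}\ge1$ is exactly $q\theta\le\gamma$, which is also the spatial Hölder splitting the paper performs (there encoded as $n_\varepsilon^p=n_\varepsilon^{\frac{\gamma-p}{\gamma-1}}n_\varepsilon^{\frac{\gamma(p-1)}{\gamma-1}}$). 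The only cosmetic difference is in the final time integral: you apply Hölder in $t$ with exponent $\gamma/(q\theta)$, whereas the paper instead uses the elementary pointwise bound $x^{a}\le x+1$ for $a\in[0,1]$, $x\ge 0$, applied to $x=\int_\Omega n_\varepsilon^\gamma$; both dispose of the exponent $q(p-1)/(p(\gamma-1))\le 1$ equally well.
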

\begin{proof}The claim \eqref{c3} can be directly achieved by combining the assumption \eqref{f} on the form of $f$ with \eqref{c1}.
Let $p,q>1$ be such that $p\le\gamma$ and $\frac{1}{p}+\frac{\gamma-1}{q}\ge1$.
By using the H\"{o}lder inequality, we have
\begin{align}\label{p1}
\int_0^T \Big(\int_{\Omega} n_\varepsilon^p \Big)^\frac{q}{p}
&= \int_0^T \Big(\int_{\Omega} n_\varepsilon^{p(\frac{\gamma-p}{p(\gamma-1)}+1-\frac{\gamma-p}{p(\gamma-1)})} \Big)^\frac{q}{p}\nonumber\\
&\le \int_0^T \Big(\int_{\Omega} n_\varepsilon \Big)^\frac{q(\gamma-p)}{p(\gamma-1)}
 \Big(\int_{\Omega} n_\varepsilon^{\gamma} \Big)^\frac{q(p-1)}{p(\gamma-1)}
 \nonumber\\
&\le\Big(\sup_{t\in(0,T)}\|n_\varepsilon(\cdot,t)\|_{L^1(\Omega)}\Big)^\frac{q(\gamma-p)}{p(\gamma-1)}
 \int_0^{T}\Big(\int_{\Omega} n_\varepsilon^{\gamma} \Big)^\frac{q(p-1)}{p(\gamma-1)}
\end{align}
for all $\varepsilon\in(0,1)$.
Since the condition $\frac{1}{p}+\frac{\gamma-1}{q}\ge1$ ensures $\frac{q(p-1)}{p(\gamma-1)}\le 1$, it  holds that
\begin{align}\label{p2}
\Big(\int_{\Omega} n_\varepsilon^{\gamma}(x,t) dx\Big)^\frac{q(p-1)}{p(\gamma-1)}
\le \int_{\Omega} n_\varepsilon^{\gamma}(x,t) dx+1\quad for~all~t>0~\text{and}~\varepsilon\in(0,1).
\end{align}
Thus, we can derive \eqref{c4} from \eqref{c2}, \eqref{c3}, \eqref{p1} and \eqref{p2}.
\end{proof}

Taking $q=2$ and $p=\frac2{3-\gamma}$ in the above lemma, we immediately have the following corollary.
\begin{corollary}\label{cor1}
Let $T>0$. If $\gamma\in(1,2)$, then there exists $C = C(T) > 0$ such that
\begin{equation}\label{c4'}
\| n_{\varepsilon}\|_{L^2((0,T);L^{\frac{2}{3-\gamma}}(\Omega))}\le C\quad for~all~\varepsilon\in(0,1).
\end{equation}
\end{corollary}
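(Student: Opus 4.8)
The plan is simply to specialize Lemma~\ref{lem3.2} to the exponents $q=2$ and $p=\frac{2}{3-\gamma}$; the entire content of the argument is the verification that this pair is admissible, after which \eqref{c4'} is nothing but \eqref{c4}.

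First I would record that $p,q>1$: indeed $q=2>1$, and since $\gamma\in(1,2)$ forces $3-\gamma\in(1,2)$, we get $p=\frac{2}{3-\gamma}\in(1,2)$, so in particular $p>1$. Next I would check the constraint $p\le\gamma$, which is equivalent to $2\le\gamma(3-\gamma)$, i.e.\ to $(\gamma-1)(2-\gamma)\ge0$; this holds throughout $[1,2]$ and hence for our $\gamma$. Finally I would verify the balance condition $\frac1p+\frac{\gamma-1}{q}\ge1$: with the chosen exponents, $\frac1p+\frac{\gamma-1}{q}=\frac{3-\gamma}{2}+\frac{\gamma-1}{2}=1$, so it is satisfied, in fact with equality, meaning $\frac{2}{3-\gamma}$ is exactly the borderline space exponent reachable from an $L^2$-in-time bound.

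Since all hypotheses of Lemma~\ref{lem3.2} are met, \eqref{c4} yields $\|n_\varepsilon\|_{L^2((0,T);L^{2/(3-\gamma)}(\Omega))}\le C(T)$ uniformly in $\varepsilon\in(0,1)$, which is precisely \eqref{c4'}. There is no genuine obstacle in this step: all the work resides in Lemma~\ref{lem3.2}, and ultimately in the logistic-type a priori bound \eqref{c1} of Lemma~\ref{lem3.1}; the corollary merely isolates the particular space--time integrability of $n_\varepsilon$ that will be convenient later in the sub-quadratic regime, e.g.\ when handling terms such as $\mathbf{u}_\varepsilon\cdot\nabla c_\varepsilon$ or the coupling $n_\varepsilon\nabla\phi$.
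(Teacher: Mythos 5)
Your proposal is correct and follows exactly the paper's own route: the paper simply says ``Taking $q=2$ and $p=\frac{2}{3-\gamma}$ in the above lemma, we immediately have the following corollary,'' and your explicit verifications that $p,q>1$, $p\le\gamma$, and $\frac1p+\frac{\gamma-1}{q}=1\ge1$ are all accurate. No discrepancies.
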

We note that the derivation of Lemma~\ref{lem3.2} relies on $f_-$ growing at least as fast as a superlinear power, as opposed to growing superlinearly only. The result in the form of \eqref{c4'} will be a crucial ingredient in the following proofs.
In order to apply this corollary, in several of the upcoming lemmata we will assume $\gamma<2$. They will still allow to conclude Theorem~\ref{th 1} for $\gamma\ge 2$, too (see proof of Theorem~\ref{th 1} at the end of Section~\ref{pfthm1}).
\subsection{Estimates for \texorpdfstring{$\{\mathbf{u}_{\varepsilon}\}_{\varepsilon\in(0,1)}$}{\{u\_ε \}}}
The main difference between \eqref{q1} and \eqref{q2} is the additional presence of $\mathbf{u}$.  Fortunately, \eqref{c2}, \eqref{c3}, \eqref{c4'} imply some $\eps $-independent boundedness of $\mathbf{u}_\varepsilon$ and $\nabla\mathbf{u}_\varepsilon$.


\begin{lemma}\label{lem3.3}
Let $T>0$. If $\gamma\in(1,2)$, we can find a positive constant $C=C(T)$ such that
\begin{align}\label{b0}
&\int_{\Omega}|\mathbf{u}_\varepsilon(\cdot, t)|^{2} \leq C\quad \text { for all } t\in(0,T)\text { and } \varepsilon \in(0,1)
\end{align}
and
\begin{align}\label{b0'}
&\int_{0}^{T}\int_{\Omega}|\nabla\mathbf{u}_\varepsilon(x, t)|^{2}dxdt \leq C\quad \text { for all } \varepsilon \in(0,1).
\end{align}
\end{lemma}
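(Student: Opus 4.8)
The plan is to obtain the standard energy inequality for the (approximate) Navier–Stokes component by testing the third equation of \eqref{qapp} with $\mathbf{u}_\varepsilon$ and controlling the forcing term $\int_\Omega n_\varepsilon\nabla\phi\cdot\mathbf{u}_\varepsilon$ via the a priori bound on $n_\varepsilon$ in $L^2((0,T);L^{\frac{2}{3-\gamma}}(\Omega))$ from Corollary~\ref{cor1}. First I would multiply $\eqref{qapp}_3$ by $\mathbf{u}_\varepsilon$, integrate over $\Omega$, and use $\nabla\cdot\mathbf{u}_\varepsilon=0$ together with $\mathbf{u}_\varepsilon=0$ on $\partial\Omega$ to make both the pressure term $\int_\Omega\nabla P_\varepsilon\cdot\mathbf{u}_\varepsilon$ and the convective term $\kappa\int_\Omega(\mathbf{u}_\varepsilon\cdot\nabla)\mathbf{u}_\varepsilon\cdot\mathbf{u}_\varepsilon$ vanish; this yields
\[
\frac12\frac{d}{dt}\int_\Omega|\mathbf{u}_\varepsilon|^2 + \int_\Omega|\nabla\mathbf{u}_\varepsilon|^2 = \int_\Omega n_\varepsilon\nabla\phi\cdot\mathbf{u}_\varepsilon \le \|\nabla\phi\|_{L^\infty(\Omega)}\int_\Omega n_\varepsilon|\mathbf{u}_\varepsilon|.
\]

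Next I would estimate the right-hand side. Since $\Omega\subset\mathbb{R}^2$ and $\gamma\in(1,2)$, the exponent $\frac{2}{3-\gamma}$ lies in $(1,2)$, and its H\"older conjugate $\frac{2}{\gamma-1}$ exceeds $2$, so the Sobolev (or Gagliardo–Nirenberg) embedding $W_{0,\sigma}^{1,2}(\Omega;\mathbb{R}^2)\hookrightarrow L^{\frac{2}{\gamma-1}}(\Omega)$ is available in two dimensions. Applying H\"older's inequality with exponents $\frac{2}{3-\gamma}$ and $\frac{2}{\gamma-1}$, then this embedding and Young's inequality,
\[
\|\nabla\phi\|_{L^\infty(\Omega)}\int_\Omega n_\varepsilon|\mathbf{u}_\varepsilon| \le C\|n_\varepsilon\|_{L^{\frac{2}{3-\gamma}}(\Omega)}\|\mathbf{u}_\varepsilon\|_{L^{\frac{2}{\gamma-1}}(\Omega)} \le \tfrac12\|\nabla\mathbf{u}_\varepsilon\|_{L^2(\Omega)}^2 + C\|n_\varepsilon\|_{L^{\frac{2}{3-\gamma}}(\Omega)}^2.
\]
Absorbing the gradient term into the left-hand side gives $\frac{d}{dt}\int_\Omega|\mathbf{u}_\varepsilon|^2 + \int_\Omega|\nabla\mathbf{u}_\varepsilon|^2 \le C\|n_\varepsilon\|_{L^{\frac{2}{3-\gamma}}(\Omega)}^2$, whose time integral over $(0,T)$ is bounded by \eqref{c4'} and $\|\mathbf{u}_{0\varepsilon}\|_{L^2(\Omega)}^2$, which is bounded by \eqref{initial1}. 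An integration in time then yields \eqref{b0}, and integrating the differential inequality once more (or simply reading off the already-obtained bound on $\int_0^T\int_\Omega|\nabla\mathbf{u}_\varepsilon|^2$) yields \eqref{b0'}; the Poincar\'e inequality for $W_0^{1,2}$ could alternatively be used to close the Gronwall loop, but with the gradient absorbed it is not even needed.

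One technical point worth noting is that the formal testing should really be justified by a Galerkin or mollification argument, or by invoking the regularity $\mathbf{u}_\varepsilon\in C^{2,1}$ asserted in Lemma~\ref{lem2.1} on $(0,\infty)$ so that the manipulation is licit away from $t=0$ and then extended by continuity; this is routine and I would only remark on it. The only genuinely substantive input is the interpolation/embedding step, and the reason the argument needs $\gamma<2$ (hence Corollary~\ref{cor1}) rather than $\gamma\ge 2$: for $\gamma\ge 2$ the exponent $\frac{2}{3-\gamma}$ is no longer in the admissible range and one must instead fall back on the simpler $L^1$–bound \eqref{c2} combined with the two-dimensional embedding $W_0^{1,2}\hookrightarrow L^\infty$ being \emph{false}, so a separate (easier, since better $n_\varepsilon$ bounds are available for larger $\gamma$) treatment is deferred, consistent with the remark following Corollary~\ref{cor1}. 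I do not anticipate any real obstacle here — the main care is simply in choosing the H\"older pair so that the cell-density norm that appears is exactly the one controlled by \eqref{c4'}.
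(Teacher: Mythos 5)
Your proposal is correct and follows essentially the same route as the paper: test $\eqref{qapp}_3$ with $\mathbf{u}_\varepsilon$, drop the pressure and convective terms by solenoidality and the no-slip condition, pair $n_\varepsilon$ and $\mathbf{u}_\varepsilon$ via H\"older with the conjugate exponents $\frac{2}{3-\gamma}$ and $\frac{2}{\gamma-1}$, use the embedding $W_0^{1,2}(\Omega)\hookrightarrow L^{\frac{2}{\gamma-1}}(\Omega)$ with Poincar\'e to absorb the gradient, and integrate in time invoking \eqref{c4'} and \eqref{initial1}. The only minor difference is that the paper states the Poincar\'e step explicitly while you observe it is dispensable after absorption, which is an acceptable simplification; your side remarks on $\gamma\ge 2$ are tangential but consistent with how the paper defers that case.
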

\begin{proof}
By using the imbedding $W^{1,2}_0(\Omega) \hookrightarrow L^{\frac{2}{\gamma-1}}(\Omega)$ and the Poincar\'{e} inequality,
we can find $c_{1}=c_1(\gamma,\Omega)>0$ such that
$$
\|\mathbf{u}_\varepsilon\|_{L^{\frac{2}{\gamma-1}}(\Omega)}
 \leq c_{1}\|\nabla \mathbf{u}_\varepsilon\|_{L^{2}(\Omega)}
 ~~\text { for all } \varepsilon \in(0,1).
$$
Testing $\eqref{qapp}_{3}$ by $\mathbf{u}_\varepsilon$, noticing $\nabla \cdot\mathbf{u}_\varepsilon=0$ in $\Omega\times(0,T)$ and defining $c_2:=\|\phi\|_{L^\infty(\Omega)}$,
we use the H\"{o}lder inequality and Young's inequality
to find
$$
\begin{aligned}
\frac{1}{2} \frac{d}{d t} \int_{\Omega}|\mathbf{u}_\varepsilon|^{2}
+\int_{\Omega}|\nabla\mathbf{u}_\varepsilon|^{2}
&=\int_{\Omega} n_\varepsilon \mathbf{u}_\varepsilon\cdot \nabla \phi  \\
& \leq c_2 \|\mathbf{u}_\varepsilon\|_{L^{\frac{2}{\gamma-1}}(\Omega)}
\|n_\varepsilon\|_{L^{\frac{2}{3-\gamma}}(\Omega)} \\
& \leq c_{1}c_2\|\nabla \mathbf{u}_\varepsilon\|_{L^{2}(\Omega)}
\|n_\varepsilon\|_{L^{\frac{2}{3-\gamma}}(\Omega)} \\
& \leq \frac{1}{2} \int_{\Omega}|\nabla\mathbf{u}_\varepsilon|^{2}
+\frac{c_{1}^{2}c_{2}^{2}  }{2}\|n_\varepsilon\|^2_{L^{\frac{2}{3-\gamma}}(\Omega)}~~
\text{ in } (0,T) \text{ for all } \varepsilon \in(0,1),
\end{aligned}
$$
which directly shows that
\begin{equation}\label{j1}
\frac{d}{d t} \int_{\Omega}|\mathbf{u}_\varepsilon|^{2}
+\int_{\Omega}|\nabla \mathbf{u}_\varepsilon|^{2}
\leq
c^2_1c_{2}^{2}\|n_\varepsilon\|^2_{L^{\frac{2}{3-\gamma}}(\Omega)}
~~\text{ in } (0,T) \text{ for all } \varepsilon \in(0,1).
\end{equation}
 An integration of \eqref{j1} over $(0,\tau)$ with
any $\tau\in(0,T]$ gives that
\begin{align}\label{j2-1}
\int_{\Omega}|\mathbf{u}_\varepsilon(\cdot,\tau)|^{2}
+\int_{0}^{\tau}\int_{\Omega}|\nabla \mathbf{u}_\varepsilon|^{2}
\leq
&c^2_1c_{2}^{2}\int_{0}^{\tau}\|n_\varepsilon(\cdot,t)\|^2_{L^{\frac{2}{3-\gamma}}(\Omega)}dt
+\int_{\Omega}|\mathbf{u}_{0\varepsilon}|^{2}\nonumber\\
\leq&
c^2_1c_{2}^{2}\int_{0}^{T}\|n_\varepsilon(\cdot,t)\|^2_{L^{\frac{2}{3-\gamma}}(\Omega)} dt+\int_{\Omega}|\mathbf{u}_{0\varepsilon}|^{2}
\quad \text { for all }\varepsilon \in(0,1).
\end{align}
Thus, by utilizing \eqref{c4'} and \eqref{initial1}, we can deduce the existence of
$c_3=c_3(\gamma,T,\Omega)>0$ such that
\begin{align}\label{j2-2}
\sup_{\tau\in(0,T)}\int_{\Omega}|\mathbf{u}_\varepsilon(\cdot,\tau)|^{2}
+\int_{0}^{T}\int_{\Omega}|\nabla \mathbf{u}_\varepsilon|^{2}
\leq
&c_3
\quad \text { for all }\varepsilon \in(0,1),
\end{align}
as desired.
\end{proof}

\begin{corollary}\label{cor2}
If $\gamma\in(1,2)$, letting $T>0$, we can find $C = C(T) > 0$ satisfying
\begin{align}\label{w0}
\int_{0}^{T} \int_{\Omega}|\mathbf{u}_\varepsilon|^4\leq C
 \quad for~all~\varepsilon\in(0,1).
\end{align}
\end{corollary}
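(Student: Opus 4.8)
The plan is to combine the two-dimensional Gagliardo--Nirenberg inequality with the bounds from Lemma~\ref{lem3.3}. First I would recall that in the planar setting the embedding $W^{1,2}_0(\Omega)\hookrightarrow L^4(\Omega)$ comes with the interpolation estimate
\[
\|\mathbf{u}_\varepsilon\|_{L^4(\Omega)}^2 \le c_1\|\nabla\mathbf{u}_\varepsilon\|_{L^2(\Omega)}\|\mathbf{u}_\varepsilon\|_{L^2(\Omega)}
\]
valid for all $\mathbf{u}_\varepsilon\in W^{1,2}_0(\Omega;\mathbb{R}^2)$ with some $c_1=c_1(\Omega)>0$; here the Poincar\'e inequality is used to absorb the lower-order term that otherwise appears in the Gagliardo--Nirenberg inequality. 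Squaring this gives, pointwise in $t\in(0,T)$,
\[
\|\mathbf{u}_\varepsilon(\cdot,t)\|_{L^4(\Omega)}^4 \le c_1^2\,\|\nabla\mathbf{u}_\varepsilon(\cdot,t)\|_{L^2(\Omega)}^2\,\|\mathbf{u}_\varepsilon(\cdot,t)\|_{L^2(\Omega)}^2.
\]

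Next I would integrate over $(0,T)$ and pull out the supremum in time of $\|\mathbf{u}_\varepsilon(\cdot,t)\|_{L^2(\Omega)}^2$, obtaining
\[
\int_0^T\int_\Omega|\mathbf{u}_\varepsilon|^4 \le c_1^2\Big(\sup_{t\in(0,T)}\int_\Omega|\mathbf{u}_\varepsilon(\cdot,t)|^2\Big)\int_0^T\int_\Omega|\nabla\mathbf{u}_\varepsilon|^2.
\]
At this point both factors on the right-hand side are controlled uniformly in $\varepsilon\in(0,1)$: the bracketed quantity by the constant in \eqref{b0}, and the space-time integral of $|\nabla\mathbf{u}_\varepsilon|^2$ by that in \eqref{b0'} (both of which rely on $\gamma\in(1,2)$ via Corollary~\ref{cor1}). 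Multiplying these constants produces the desired $C=C(T)>0$.

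I do not expect any genuine obstacle here; the only point requiring a little care is to apply the Gagliardo--Nirenberg inequality in its correct two-dimensional form and to use the homogeneous Dirichlet boundary condition (through Poincar\'e) so that no additional lower-order term survives. Everything else is a one-line estimate building directly on Lemma~\ref{lem3.3}.
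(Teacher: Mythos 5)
Your argument is essentially identical to the paper's own proof: both invoke the two-dimensional Gagliardo--Nirenberg inequality (with Poincar\'e removing the lower-order term), raise it to the fourth power, integrate in time, and then pull out $\sup_t\|\mathbf{u}_\varepsilon\|_{L^2}^2$ before applying \eqref{b0} and \eqref{b0'} from Lemma~\ref{lem3.3}. The proposal is correct and takes the same route as the paper.
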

\begin{proof}
An application of the Gagliardo-Nirenberg inequality allows us to find $c_1>0$ satisfying
\begin{align}\label{w1:1}
\|\mathbf{u}_\varepsilon\|_{L^4(\Omega)}
&\le  c_1\|\nabla\mathbf{u}_\varepsilon(\cdot,t)\|^\frac{1}{2}_{L^2(\Omega)}
\|\mathbf{u}_\varepsilon(\cdot,t)\|^\frac{1}{2}_{L^2(\Omega)} \text{  for all } \varepsilon\in(0,1),
\end{align}
which together with Lemma \ref{lem3.3} implies that for some $c_2=c_2(T)>0$
\begin{align}\label{w1:2}
\int_0^T \int_{\Omega}|\mathbf{u}_\varepsilon(x,t)|^4dxdt
&\le c_1^4 \int_0^T \|\nabla\mathbf{u}_\varepsilon(\cdot,t)\|^2_{L^2(\Omega)}
\|\mathbf{u}_\varepsilon(\cdot,t)\|^{2}_{L^{2}(\Omega)}dt
 \nonumber\\
&\le c_1^4\Big(\sup_{t\in(0,T)}\|\mathbf{u}_\varepsilon(\cdot,t)\|_{L^{2}(\Omega)}\Big)^{2}
 \int_0^{T}\|\nabla\mathbf{u}_\varepsilon(\cdot,t)\|^2_{L^2(\Omega)}dt
 \le c_2
\end{align}
for all $\varepsilon\in(0,1)$.
\end{proof}
\begin{lemma}\label{lemL2}
Let $T>0$. If $\gamma\in(1,2)$, then
$\{\mathbf{u}_\varepsilon\}_{\varepsilon\in(0,1)}$ is relatively compact with respect to the strong topology in $L^2\big((0, T);L^{2}(\Omega;\mathbb{R}^2)\big)$.
\end{lemma}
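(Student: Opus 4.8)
The plan is to deduce the claim from an Aubin--Lions--Simon compactness lemma. Lemma~\ref{lem3.3} already furnishes $\eps$-independent bounds for $\{\uep\}_{\eps\in(0,1)}$ in $L^2\big((0,T);W_{0,\sigma}^{1,2}(\Omega;\mathbb{R}^2)\big)$ and in $L^\infty\big((0,T);L^2(\Omega;\mathbb{R}^2)\big)$, and Corollary~\ref{cor2} provides a bound in $L^4\big((0,T);L^4(\Omega;\mathbb{R}^2)\big)$. Since $W_{0,\sigma}^{1,2}(\Omega;\mathbb{R}^2)\hookrightarrow\hookrightarrow L^2_\sigma(\Omega)\hookrightarrow \big(W_{0,\sigma}^{1,2}(\Omega;\mathbb{R}^2)\big)^*$, with the first embedding compact by the Rellich--Kondrachov theorem, it remains only to establish an $\eps$-uniform bound for the time derivatives $\partial_t\uep$ in $L^2\big((0,T);(W_{0,\sigma}^{1,2}(\Omega;\mathbb{R}^2))^*\big)$; the assertion then follows by a standard application of the Aubin--Lions--Simon lemma.

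To obtain this bound I would test the third equation of \eqref{qapp} against an arbitrary $\varphi\in W_{0,\sigma}^{1,2}(\Omega;\mathbb{R}^2)$. Since $\varphi$ is solenoidal and vanishes on $\partial\Omega$, the pressure term disappears upon integration by parts, and rewriting $\kappa(\uep\cdot\nabla)\uep=\kappa\nabla\cdot(\uep\otimes\uep)$ by means of $\nabla\cdot\uep=0$ one obtains, for every $t\in(0,T)$,
\begin{align*}
 \Big|\io\partial_t\uep\cdot\varphi\Big|
 &=\Big|-\io\nabla\uep\cdot\nabla\varphi+\kappa\io\uep\otimes\uep\cdot\nabla\varphi+\io\nep\nabla\phi\cdot\varphi\Big|\\
 &\le\Big(\|\nabla\uep\|_{L^2(\Omega)}+\kappa\|\uep\|_{L^4(\Omega)}^2+c_\ast\|\nabla\phi\|_{L^\infty(\Omega)}\|\nep\|_{L^{\frac{2}{3-\gamma}}(\Omega)}\Big)\|\nabla\varphi\|_{L^2(\Omega)},
\end{align*}
where $c_\ast>0$ denotes the constant in $\|\psi\|_{L^{2/(\gamma-1)}(\Omega)}\le c_\ast\|\nabla\psi\|_{L^2(\Omega)}$ for $\psi\in W_0^{1,2}(\Omega)$ (available in the two-dimensional setting and already exploited in Lemma~\ref{lem3.3}), and where H\"older's inequality was used after noting that $\frac{2}{3-\gamma}$ and $\frac{2}{\gamma-1}$ are conjugate exponents. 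Taking the supremum over $\varphi$ with $\|\nabla\varphi\|_{L^2(\Omega)}\le1$ and invoking the Poincar\'e inequality, $\|\partial_t\uep(\cdot,t)\|_{(W_{0,\sigma}^{1,2})^*}$ is thereby controlled by $\|\nabla\uep(\cdot,t)\|_{L^2(\Omega)}+\|\uep(\cdot,t)\|_{L^4(\Omega)}^2+\|\nep(\cdot,t)\|_{L^{2/(3-\gamma)}(\Omega)}$, and by Lemma~\ref{lem3.3}, Corollary~\ref{cor2} and Corollary~\ref{cor1} this last expression lies in $L^2((0,T))$ with a bound independent of $\eps\in(0,1)$.

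Putting these ingredients together, the Aubin--Lions--Simon lemma yields that $\{\uep\}_{\eps\in(0,1)}$ is relatively compact in $L^2\big((0,T);L^2_\sigma(\Omega)\big)$, and hence in $L^2\big((0,T);L^2(\Omega;\mathbb{R}^2)\big)$. The only place where the restriction $\gamma<2$ is genuinely used is the estimate of the buoyancy term $\nep\nabla\phi$, which needs $\nep$ to be bounded in $L^2\big((0,T);L^p(\Omega)\big)$ for some $p>1$, precisely the content of Corollary~\ref{cor1}; I expect this to be the main (though rather mild) point of the argument, the remaining terms following directly from the $\eps$-independent energy estimates for $\uep$ recorded above.
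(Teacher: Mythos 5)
Your proof is correct and uses essentially the same strategy as the paper: derive an $\eps$-uniform bound on $\partial_t\uep$ in a negative Sobolev space by testing the Navier--Stokes equation against solenoidal test functions, then apply Aubin--Lions compactness. The two versions differ only in the choice of test space and of H\"older exponents. The paper tests against $\xi\in C_0^\infty(\Omega;\mathbb{R}^2)$ with $\nabla\cdot\xi=0$ and works in $\big(W_0^{1,\infty}(\Omega)\cap L^2_\sigma(\Omega)\big)^*$, estimating the convection term by $\|\uep\|_{L^2}\|\nabla\uep\|_{L^2}\|\xi\|_{L^\infty}$ and the buoyancy term by $\|\nep\|_{L^1}\|\nabla\phi\|_{L^\infty}\|\xi\|_{L^\infty}$; this way it only needs the $L^\infty L^1$ mass bound on $\nep$ and the bounds from Lemma~\ref{lem3.3}. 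You instead work in the Gelfand triple $W^{1,2}_{0,\sigma}\hookrightarrow L^2_\sigma\hookrightarrow (W^{1,2}_{0,\sigma})^*$ and pay for the smaller test space by invoking the $L^4L^4$ bound on $\uep$ from Corollary~\ref{cor2} and the $L^2L^{2/(3-\gamma)}$ bound on $\nep$ from Corollary~\ref{cor1} -- both of which are indeed at your disposal. Your setup is arguably the more standard Navier--Stokes Gelfand triple, while the paper's variant has the (here immaterial) advantage that none of the H\"older exponents in the $\partial_t\uep$ estimate depend on $\gamma$.
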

\begin{proof}
We multiply \eqref{qapp}$_3$ by $\xi \in C_0^\infty(\Omega;\mathbb{R}^2)$ with $\nabla\cdot\xi=0$
and integrate by parts to get
\begin{align}\label{u6}
\left|\int_{\Omega} \mathbf{u}_{\varepsilon t}(\cdot, t)\xi\right| &=\left|\int_{\Omega}-\nabla\mathbf{u}_\varepsilon\cdot \nabla \xi
-
\kappa\int_{\Omega}(\mathbf{u}_{\varepsilon }\cdot\nabla)\mathbf{u}_{\varepsilon }\xi
+\int_{\Omega} n_{\varepsilon}\nabla\phi\cdot\xi \right|\nonumber\\
 & \leq\left\|\nabla\mathbf{u}_\varepsilon(\cdot, t)\right\|_{L^{2}(\Omega)}\| \nabla \xi\|_{L^{2}(\Omega)}
 +\kappa\left\|\mathbf{u}_\varepsilon(\cdot, t)\right\|_{L^{2}(\Omega)}
 \left\|\nabla\mathbf{u}_\varepsilon(\cdot, t)\right\|_{L^{2}(\Omega)}
 \|\xi\|_{L^{\infty}(\Omega)}\nonumber\\
 &
\quad+\left\|n_{\varepsilon}(\cdot, t)\right\|_{L^{1}(\Omega)}\|\nabla\phi\|_{L^{\infty}(\Omega)}
\|\xi\|_{L^{\infty}(\Omega)}
~~\text{for~all~}t\in(0,T)~\text{and}~\varepsilon\in(0,1).
\end{align}
Thus, there exists $c_1>0$ such that
\begin{align*}
\left\|\mathbf{u}_{\varepsilon t}(\cdot, t)\right\|_{\big(W_0^{1,\infty}(\Omega)\cap L^{2}_\sigma(\Omega)\big)^{*}}
&\leq c_1\Big(\left\|\nabla\mathbf{u}_\varepsilon(\cdot, t)\right\|_{L^{2}(\Omega)}
 +\left\|\mathbf{u}_\varepsilon(\cdot, t)\right\|_{L^{2}(\Omega)}
 \left\|\nabla\mathbf{u}_\varepsilon(\cdot, t)\right\|_{L^{2}(\Omega)}\nonumber\\
&\quad+\left\|n_{\varepsilon}(\cdot, t)\right\|_{L^{1}(\Omega)}\Big)
\quad\text{for~all~}t\in(0,T)~\text{and}~\varepsilon\in(0,1).
\end{align*}
This along with \eqref{c2'}, \eqref{b0} and \eqref{b0'} indicates that
\begin{align}\label{u7}
\left\{\mathbf{u}_{\varepsilon t}\right\}_{\varepsilon \in(0,1)} \text { is  bounded in } L^{2}\Big((0, T);\big(W_0^{1,\infty}(\Omega)\cap L^{2}_\sigma(\Omega)\big)^{*}\Big).
\end{align}
 Since $W_0^{1,2}(\Omega)\cap L^{2}_\sigma(\Omega)$ is compactly embedded into $L^2(\Omega)$, and $L^2(\Omega)$ is continuously embedded in $\big(W_0^{1,\infty}(\Omega)\cap L^{2}_\sigma(\Omega)\big)^{*}$,
 we obtain the desired result by \eqref{b0'} and \eqref{u7} due to the Aubin-Lions lemma.\end{proof}

\subsection{Estimates for \texorpdfstring{$\{c_{\varepsilon}\}_{\varepsilon\in(0,1)}$}{\{c\_ε \}}}
\begin{lemma}\label{lem3.5}
Let $T>0$. If $\gamma\in(1,2)$, we can find a positive constant $C=C(T)$ such that
\begin{align}\label{u0}
&\int_{\Omega}c_\varepsilon^2(\cdot, t) \leq C\quad \text { for all } t\in(0,T)\text { and } \varepsilon \in(0,1)
\end{align}
and
\begin{align}\label{u0'}
&\int_{0}^{T}\int_{\Omega}|\nabla c_\varepsilon(x, t)|^{2}dxdt \leq C\quad \text { for all } \varepsilon \in(0,1).
\end{align}
\end{lemma}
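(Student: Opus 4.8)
The plan is to test the second equation of \eqref{qapp} by $c_\varepsilon$ and derive an energy inequality. Multiplying $\eqref{qapp}_2$ by $c_\varepsilon$ and integrating over $\Omega$, the convection term drops out since $\nabla\cdot\mathbf{u}_\varepsilon=0$ and $\mathbf{u}_\varepsilon=0$ on $\partial\Omega$ (indeed $\int_\Omega (\mathbf{u}_\varepsilon\cdot\nabla c_\varepsilon)c_\varepsilon = \frac12\int_\Omega \mathbf{u}_\varepsilon\cdot\nabla c_\varepsilon^2 = 0$), so that
\begin{align*}
\frac12\frac{d}{dt}\int_\Omega c_\varepsilon^2 + \int_\Omega|\nabla c_\varepsilon|^2 + \int_\Omega c_\varepsilon^2 = \int_\Omega \frac{n_\varepsilon}{1+\varepsilon n_\varepsilon}c_\varepsilon \le \int_\Omega n_\varepsilon c_\varepsilon.
\end{align*}
The right-hand side is then estimated by H\"older's and Young's inequalities: writing $\int_\Omega n_\varepsilon c_\varepsilon \le \|n_\varepsilon\|_{L^{\frac{2}{3-\gamma}}(\Omega)}\|c_\varepsilon\|_{L^{\frac{2}{\gamma-1}}(\Omega)}$ (note $\frac{3-\gamma}{2}+\frac{\gamma-1}{2}=1$), and invoking the Gagliardo--Nirenberg (or Sobolev) inequality $\|c_\varepsilon\|_{L^{\frac{2}{\gamma-1}}(\Omega)}\le c_1(\|\nabla c_\varepsilon\|_{L^2(\Omega)}+\|c_\varepsilon\|_{L^2(\Omega)})$, which is available in two dimensions for any finite exponent. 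Absorbing $\frac12\|\nabla c_\varepsilon\|_{L^2(\Omega)}^2 + \frac12\|c_\varepsilon\|_{L^2(\Omega)}^2$ into the left-hand side via Young's inequality, one arrives at
\begin{align*}
\frac{d}{dt}\int_\Omega c_\varepsilon^2 + \int_\Omega|\nabla c_\varepsilon|^2 + \int_\Omega c_\varepsilon^2 \le c_2\|n_\varepsilon\|_{L^{\frac{2}{3-\gamma}}(\Omega)}^2 \qquad\text{in }(0,T).
\end{align*}

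Next I would integrate this differential inequality. An ODE comparison (or simply integrating the inequality $\frac{d}{dt}\int_\Omega c_\varepsilon^2 + \int_\Omega c_\varepsilon^2 \le c_2\|n_\varepsilon\|_{L^{2/(3-\gamma)}}^2$ with the integrating factor $e^t$) yields
\begin{align*}
\int_\Omega c_\varepsilon^2(\cdot,t) \le e^{-t}\int_\Omega c_{0\varepsilon}^2 + c_2\int_0^t e^{-(t-s)}\|n_\varepsilon(\cdot,s)\|_{L^{2/(3-\gamma)}(\Omega)}^2\,ds \le \int_\Omega c_{0\varepsilon}^2 + c_2\int_0^T \|n_\varepsilon(\cdot,s)\|_{L^{2/(3-\gamma)}(\Omega)}^2\,ds
\end{align*}
for all $t\in(0,T)$. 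By \eqref{initial1} the data term $\int_\Omega c_{0\varepsilon}^2$ is bounded uniformly in $\varepsilon$ (here one uses that $D((-\Delta+1)^{2\sigma})\hookrightarrow L^2(\Omega)$), and by Corollary~\ref{cor1} the last integral is bounded by $c_2 C(T)$. This proves \eqref{u0}. For \eqref{u0'}, I would instead integrate the full inequality $\frac{d}{dt}\int_\Omega c_\varepsilon^2 + \int_\Omega|\nabla c_\varepsilon|^2 \le c_2\|n_\varepsilon\|_{L^{2/(3-\gamma)}}^2$ directly over $(0,T)$, giving $\int_0^T\int_\Omega|\nabla c_\varepsilon|^2 \le \int_\Omega c_{0\varepsilon}^2 + c_2\int_0^T\|n_\varepsilon\|_{L^{2/(3-\gamma)}}^2$, which is again bounded by the above ingredients.

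There is no genuine obstacle here; the lemma is a routine energy estimate, and the role of the hypothesis $\gamma\in(1,2)$ is precisely to make the exponent $\frac{2}{3-\gamma}$ finite and thereby bring Corollary~\ref{cor1} to bear, while two-dimensionality guarantees the Sobolev embedding $W^{1,2}(\Omega)\hookrightarrow L^{2/(\gamma-1)}(\Omega)$ for the conjugate exponent. The only point requiring a little care is the choice of H\"older pairing so that the $n_\varepsilon$-norm lands exactly on the exponent $\frac{2}{3-\gamma}$ controlled by Corollary~\ref{cor1}, and making sure the Sobolev constant is $\varepsilon$-independent (which it is, depending only on $\Omega$ and $\gamma$).
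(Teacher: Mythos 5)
Your proposal is correct and follows essentially the same approach as the paper: testing $\eqref{qapp}_2$ by $c_\varepsilon$, splitting the source term by H\"older with exponents $\frac{2}{3-\gamma}$ and $\frac{2}{\gamma-1}$, invoking the two-dimensional Sobolev embedding $W^{1,2}(\Omega)\hookrightarrow L^{2/(\gamma-1)}(\Omega)$, absorbing via Young, and then integrating the resulting differential inequality (the paper uses the integrating factor $e^t$ for both conclusions, whereas you treat \eqref{u0'} by a direct integration, but this is an inconsequential variation).
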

\begin{proof}
Testing $\eqref{qapp}_{2}$ by $c_\varepsilon$ and using the H\"{o}lder inequality, we have that
\begin{align}\label{u1}
\frac{1}{2} \frac{d}{d t} \int_{\Omega}c_\varepsilon^{2}
& +\int_{\Omega}|\nabla c_\varepsilon|^{2} +\int_{\Omega}c_\varepsilon^{2}
=\int_{\Omega} \frac{n_\varepsilon c_\varepsilon}{1+\varepsilon n_\varepsilon} \nonumber\\
& \leq\|c_\varepsilon\|_{L^{\frac{2}{\gamma-1}}(\Omega)}
\|n_\varepsilon\|_{L^{\frac{2}{3-\gamma}}(\Omega)} \quad \text { in } (0, T)\text { and for all } \varepsilon \in(0,1),
\end{align}
as long as $\gamma\in(1,3)$.
The Sobolev imbedding $W^{1,2}(\Omega) \hookrightarrow L^{\frac{2}{\gamma-1}}(\Omega)$ allows us to find $c_{1}=c_1(\Omega)>0$ such that
$$
\|c_\varepsilon(\cdot,t)\|_{L^{\frac{2}{\gamma-1}}(\Omega)}
 \leq c_{1}\left(\|\nabla \cep\|_{\Lom2} + \|\cep\|_{\Lom2} \right)
  \quad \text { for all } t \in(0, T)\text { and } \varepsilon \in(0,1).
$$
Substituting this in \eqref{u1} and using Young's inequality yields
$$
\begin{aligned}
&\quad\frac{1}{2} \frac{d}{d t} \int_{\Omega}c_\varepsilon^{2}+\int_{\Omega}c_\varepsilon^{2}
+\int_{\Omega}|\nabla c_\varepsilon|^{2} \\
& \leq c_{1}\Big(\|\nabla c_\varepsilon\|_{L^{2}(\Omega)}
+\| c_\varepsilon\|_{L^{2}(\Omega)}\Big)
\|n_\varepsilon\|_{L^{\frac{2}{3-\gamma}}(\Omega)} \\
& \leq \frac{1}{2} \int_{\Omega}|\nabla c_\varepsilon|^{2}
+\frac{1}{2} \int_{\Omega}c_\varepsilon^{2}
+c_1^2\|n_\varepsilon\|^2_{L^{\frac{2}{3-\gamma}}(\Omega)} \quad \text { in } (0, T) \text{ for all } \varepsilon \in(0,1),
\end{aligned}
$$
which directly tells us that
\begin{equation}
\frac{d}{d t} \int_{\Omega}c_\varepsilon^{2}
+\int_{\Omega}c_\varepsilon^{2}
+\int_{\Omega}|\nabla c_\varepsilon|^{2}
\leq 2c^2_1\|n_\varepsilon\|^2_{L^{\frac{2}{3-\gamma}}(\Omega)}
\quad \text { in } (0, T) \text { for all } \varepsilon \in(0,1).
\end{equation}
It can be deduced from the variation-of-constants formula that
\begin{equation}\label{u2}
\frac{d}{d t}\Big( {\e}^{t} \int_{\Omega}c_\varepsilon^{2}(x,t)dx\Big)
+{\e}^{t}\int_{\Omega}|\nabla c_\varepsilon|^{2}
\leq
2c^2_1{\e}^{t}\|n_\varepsilon\|^2_{L^{\frac{2}{3-\gamma}}(\Omega)}
\quad \text { in } (0, T) \text { for all } \varepsilon \in(0,1).
\end{equation}
For any $\tau\in(0,T]$, integrating \eqref{u2} with respect to the time-variable over the interval $(0,\tau)$ shows that
\begin{align*}
\int_{\Omega}c_\varepsilon^{2}(\cdot,\tau)
+{\e}^{-\tau}\int_{0}^{\tau}\int_{\Omega}{\e}^{ t}|\nabla c_\varepsilon(x,t)|^{2} dxdt
\leq
&2c^2_1\int_{0}^{\tau}\|n_\varepsilon(\cdot,t)\|^2_{L^{\frac{2}{3-\gamma}}(\Omega)} dt
+\|c_{0\varepsilon}\|^2_{L^2(\Omega)}\nonumber\\
\leq&
2c^2_1\int_{0}^{T}\|n_\varepsilon(\cdot,t)\|^2_{L^{\frac{2}{3-\gamma}}(\Omega)} dt
+\|c_{0\varepsilon}\|^2_{L^2(\Omega)}\quad \text { for all }\varepsilon \in(0,1).
\end{align*}
As $\gamma<2$, this combined with \eqref{c4'} and \eqref{initial1} guarantees our claims.
\end{proof}

\label{operator}
For the treatment of $c_{\eps }$, we need slightly more regularity than could be achieved from testing \eqref{qapp}$_2$ by $c_{\eps }$ as in the previous lemma. For the next testing procedure, we let $\A$ denote the operator $-\Delta +1 $ with Neumann boundary conditions, that is the sectorial operator defined by
\begin{align}\label{A1}
\A u:=-\Delta u+u \quad \text{ for}~u\in D(\A):=\{u\in
W^{2,2}(\Omega):\partial_{\nu}u=0~\text{on}~\partial\Omega\}
\end{align}
and briefly recall some of its properties:
\begin{lemma}\label{lem3.6}
\textrm{(i)} $\A$ has closed fractional
powers $\A^{\alpha}$ ($\alpha>0$) 
. They are self-adjoint.\\
\textrm{(ii)} For any $\alpha\in(0,1)$, the domain $D\left(\A^{\alpha}\right)$ is continuously embedded in $W^{2\alpha,2}(\Omega)$.\\
\textrm{(iii)} For all $\alpha,\beta,\delta\in\mathbb{R}$ satisfying $\beta<\alpha<\delta$, there is $C>0$ such that
\begin{align*}
\|\A^\alpha\varphi\|_{L^2(\Omega)}\le C\|\A^\delta\varphi\|^{\frac{\alpha-\beta}{\delta-\beta}}_{L^2(\Omega)}
\|\A^\beta\varphi\|^{\frac{\delta-\alpha}{\delta-\beta}}_{L^2(\Omega)}\quad\text{for all } \varphi\in D(\A^\delta).
\end{align*}
\end{lemma}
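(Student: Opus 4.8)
All three assertions are classical facts from the theory of sectorial operators and interpolation spaces, so the plan is to reduce each of them to a standard result, using throughout that $\A=-\Delta+1$ is positive and self-adjoint on $\Lom2$.

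For part (i), I would begin by recalling that the Neumann-Laplacian $-\Delta$ is the non-negative self-adjoint operator on $\Lom2$ associated with the closed symmetric form $(u,v)\mapsto\io\nabla u\cdot\nabla v$ defined on $W^{1,2}(\Omega)$. Consequently $\A=-\Delta+1$ is self-adjoint with spectrum contained in $[1,\infty)$, hence positive and, in particular, sectorial, and its fractional powers are given by the functional calculus $\A^\alpha=\int_{[1,\infty)}\lambda^\alpha\,dE_\lambda$, where $E$ denotes the spectral resolution of $\A$. As real powers of a positive self-adjoint operator these are themselves self-adjoint and, being self-adjoint, closed (see, e.g., \cite{Henry}).

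For part (ii), the key step is to identify the domain with a complex interpolation space, $D(\A^\alpha)=[\Lom2,D(\A)]_\alpha$ for $\alpha\in(0,1)$; this is available because $\A$, being positive and self-adjoint, has bounded imaginary powers — indeed $\A^{is}$ is unitary for every real $s$. Combining this identity with the elliptic regularity characterization $D(\A)=\{u\in W^{2,2}(\Omega):\partial_\nu u=0\}\hookrightarrow W^{2,2}(\Omega)$, and using that complex interpolation preserves continuous embeddings and reproduces the Bessel potential scale $[\Lom2,W^{2,2}(\Omega)]_\alpha=W^{2\alpha,2}(\Omega)$, one arrives at $D(\A^\alpha)\hookrightarrow W^{2\alpha,2}(\Omega)$ for every $\alpha\in(0,1)$ (cf.\ also \cite{Triebel}). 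Here it is worth underlining that only this one-sided embedding is asserted: the equality $D(\A^\alpha)=W^{2\alpha,2}(\Omega)$ breaks down as soon as $\alpha$ is large enough for the homogeneous Neumann condition hidden in $D(\A^\alpha)$ to become a genuine constraint, whereas the embedding survives on the whole range $(0,1)$.

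For part (iii), I would argue directly from the spectral theorem. Setting $\theta:=\frac{\alpha-\beta}{\delta-\beta}\in(0,1)$ and $\psi:=\A^\beta\varphi$, which belongs to $D(\A^{\delta-\beta})$ whenever $\varphi\in D(\A^\delta)$, the inequality to be shown is equivalent to $\|\A^{\alpha-\beta}\psi\|_{\Lom2}\le\|\A^{\delta-\beta}\psi\|_{\Lom2}^{\theta}\,\|\psi\|_{\Lom2}^{1-\theta}$. Writing $\nu$ for the nonnegative measure $\|E_{(\cdot)}\psi\|_{\Lom2}^2$ and exploiting $\alpha-\beta=\theta(\delta-\beta)$, this reduces to $\int_{[1,\infty)}\lambda^{2\theta(\delta-\beta)}\,d\nu\le\bigl(\int_{[1,\infty)}\lambda^{2(\delta-\beta)}\,d\nu\bigr)^{\theta}\bigl(\int_{[1,\infty)}d\nu\bigr)^{1-\theta}$, which is precisely Hölder's inequality with exponents $\tfrac1\theta$ and $\tfrac1{1-\theta}$; since $\lambda\ge1$ throughout, the signs of $\alpha,\beta,\delta$ play no role, and one even obtains the constant $C=1$. (Alternatively, (iii) is the moment inequality for sectorial operators and could simply be quoted.) None of the three parts constitutes a genuine obstacle; the only point that deserves the small comment made above is the one in (ii), namely that — unlike the corresponding identity of spaces — the embedding $D(\A^\alpha)\hookrightarrow W^{2\alpha,2}(\Omega)$ remains valid all the way up to $\alpha<1$.
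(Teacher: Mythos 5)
Your proposal is correct. The paper does not actually prove Lemma~\ref{lem3.6}; it simply records it as a summary of standard facts and cites \cite{He,Fr}, \cite[Theorem 6.7]{Fu1969} and \cite[Theorem 14.1]{Fr}, so there is nothing in the text for your argument to diverge from. What you have written out is, in effect, the content of those citations: (i) is the functional calculus for a positive self-adjoint operator (Henry), (ii) is the identification of $D(\A^{\alpha})$ with the complex interpolation space $[\Lom2,D(\A)]_{\alpha}$ — valid here because $\A^{is}$ is unitary — followed by interpolation of the embedding $D(\A)\hookrightarrow W^{2,2}(\Omega)$ and the identity $[\Lom2,W^{2,2}(\Omega)]_{\alpha}=W^{2\alpha,2}(\Omega)$ in the Hilbert scale $p=2$ (this is essentially Fujiwara's theorem), and (iii) is the moment inequality from Friedman. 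Your spectral-theorem/Hölder derivation of (iii) is clean and even yields the sharp constant $C=1$, which is slightly stronger than the general sectorial version being invoked; that is a harmless and correct strengthening. Your remark at the end of (ii), that only the one-sided embedding is claimed and that the equality $D(\A^{\alpha})=W^{2\alpha,2}(\Omega)$ would fail once boundary conditions kick in (around $\alpha=3/4$), is an accurate and useful clarification of why the lemma is stated as an embedding rather than an identity. In short: no gap; you have simply unpacked what the authors delegated to the literature.
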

\begin{proof}
 This lemma is a summary of well-known properties of $\A$, which can be found in, e.g., \cite{He,Fr}, \cite[Theorem 6.7]{Fu1969} and \cite[Theorem 14.1]{Fr}.
\end{proof}

Before giving Lemma \ref{lem3.7}, we state the following elementary result allowing to derive boundedness by means of an argument of ordinary differential inequalites which we will employ in several places while studying the development of certain energy-like functionals.

\begin{lemma}\label{ODI}
Let $M_1,M_2>0$. Then there is $C=C(M_1,M_2)>0$ with the property that
whenever for some $T_*\in(0,∞]$ and $\tau\in[0,T_*)$ the function $y\in C^0([\tau,T_*))\cap C^1((\tau,T_*))$ is nonnegative and
satisfies
\begin{align}\label{ODI1}
y'(t)+h(t)\leq a(t) y(t)+b(t), \quad t\in(\tau,T_*),
\end{align}
where $h$, $a$, $b$ are nonnegative integrable functions with $a$ and $b$ satisfying
\begin{align}\label{ODIcondition}
\int_\tau^{T_*}a(t)dt\le M_1~~ and ~~\int_\tau^{T_*}b(t)dt\le M_2,
\end{align}
then it holds that
\begin{align}\label{ODIresult}
\sup_{t\in(\tau,T_*)}y(t)\le Cy(\tau)+C~~ and ~~\int_\tau^{T_*}h(t)dt\le Cy(\tau)+C.
\end{align}

\end{lemma}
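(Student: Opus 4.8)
The plan is to prove Lemma~\ref{ODI} by the standard Gronwall-type argument, but carried out in a way that yields the two stated conclusions simultaneously with a constant depending only on $M_1$ and $M_2$. First I would introduce the integrating factor
\[
E(t):=\exp\Big(-\int_\tau^t a(s)\,ds\Big),\qquad t\in[\tau,T_*),
\]
which by \eqref{ODIcondition} satisfies $e^{-M_1}\le E(t)\le 1$ for all such $t$. Multiplying \eqref{ODI1} by $E(t)>0$ and using $E'(t)=-a(t)E(t)$ gives
\[
\frac{d}{dt}\big(E(t)y(t)\big)+E(t)h(t)\le E(t)b(t)\le b(t)\qquad\text{in }(\tau,T_*).
\]
Dropping the nonnegative term $E(t)h(t)$ and integrating over $(\tau,t)$ yields $E(t)y(t)\le y(\tau)+\int_\tau^{T_*}b(s)\,ds\le y(\tau)+M_2$, so that
\[
y(t)\le E(t)^{-1}\big(y(\tau)+M_2\big)\le e^{M_1}\big(y(\tau)+M_2\big)\qquad\text{for all }t\in(\tau,T_*),
\]
which already establishes the first half of \eqref{ODIresult} with $C:=e^{M_1}(1+M_2)$.

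For the second conclusion I would go back to the differential inequality in the form $\frac{d}{dt}(E y)+Eh\le b$ and integrate again over $(\tau,T_*)$, this time keeping the $h$-term: since $E(t)\ge e^{-M_1}$ and $E(t)y(t)\ge 0$, we obtain
\[
e^{-M_1}\int_\tau^{T_*}h(t)\,dt\le\int_\tau^{T_*}E(t)h(t)\,dt\le E(\tau)y(\tau)+\int_\tau^{T_*}b(t)\,dt= y(\tau)+M_2,
\]
hence $\int_\tau^{T_*}h(t)\,dt\le e^{M_1}\big(y(\tau)+M_2\big)$. Enlarging $C$ to $C:=e^{M_1}(1+M_2)$ (which depends only on $M_1,M_2$) covers both inequalities in \eqref{ODIresult} at once. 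One should note here that the integrability hypotheses on $a,b,h$ together with the assumed $C^1$-regularity of $y$ make all the integrals above well defined and finite, so no additional care is needed when $T_*=\infty$; the bounds are uniform precisely because $M_1,M_2$ control $E$ from below and the accumulated forcing from above.

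There is no serious obstacle here — the statement is an elementary comparison result. The only point deserving mild attention is that one must keep the constant $C$ independent of $\tau$, $T_*$, $y(\tau)$ and of the particular functions $a,b,h$; this is automatic because $E$ is bounded between $e^{-M_1}$ and $1$ uniformly in $\tau$ and $T_*$ thanks to \eqref{ODIcondition}, and the dropped term $E(t)h(t)$ in the first step (respectively the dropped boundary term $E(T_*^-)y(T_*^-)$, or rather its nonnegativity in the limit, in the second step) is used with the correct sign. If one prefers to avoid taking a limit as $t\uparrow T_*$ in the case $T_*=\infty$, it suffices to prove the bounds on each $(\tau,T)$ with $T<T_*$ with a constant not depending on $T$ and then let $T\uparrow T_*$ by monotone convergence.
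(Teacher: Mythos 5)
Your proof is correct and essentially the same as the paper's: both are the standard Gronwall argument, dropping the nonnegative $h$-term first to bound $y$, then integrating the original inequality to bound $\int h$. The only cosmetic difference is in the second step: you multiply through by the integrating factor $E$ before integrating, so the $h$-bound comes out as $e^{-M_1}\int h\le y(\tau)+M_2$ directly, whereas the paper integrates \eqref{ODI1} as is, picks up the term $\int_\tau^{T_*}a\,y$, and controls it by $M_1\sup y$ using the bound on $y$ already established. Your variant is marginally slicker since it avoids reusing the sup bound, but the two derivations are interchangeable and yield constants of the same flavour.
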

\begin{proof}
Nonnegativity of $h$ on $(\tau,T_*)$ combined with \eqref{ODI1} ensures
\begin{align*}
y'(t)\leq a(t) y(t)+b(t), \quad t\in(\tau,T_*).
\end{align*}
An integration of this differential inequality gives that
for any $t\in(\tau, T_*)$,
\begin{align}\label{ODI2}
y(t)  \leq y\left(\tau\right) {\mathrm e}^{\int_{\tau}^{t} a(s) d s}
+\int_{\tau}^{t} \mathrm{e}^{\int_{s}^{t} a(\sigma) d \sigma} b(s) d s 
 \leq y(\tau) \mathrm{e}^{M_1}+\int_{ \tau }^{t} \mathrm{e}^{M_1} b(s) d s 
 \leq y(\tau) \mathrm{e}^{M_1}+M_2\mathrm{e}^{M_1}.
\end{align}
Integrating \eqref{ODI1} over $(\tau, T_*)$ and applying \eqref{ODI2} we infer that
\begin{align}\label{ODI3}
\int_{\tau}^{T_*}h(t)dt & \leq
\int_{\tau}^{T_*}a(t)y(t)dt+\int_{\tau}^{T_*}b(t)dt+ y\left(\tau\right)
 \nonumber \\
& \leq \big(y(\tau) \mathrm{e}^{M_1}+M_2\mathrm{e}^{M_1}\big)\int_{\tau}^{T_*}a(t)dt
+M_2 +y\left(\tau\right) \nonumber \\
& \leq y(\tau) \big( M_1\mathrm{e}^{M_1}+1)+M_1M_2\mathrm{e}^{M_1}+M_2.
\end{align}
With taking $C=(M_1+1)\max\{\mathrm{e}^{M_1},M_2\mathrm{e}^{M_1}\}$ in the claim of this lemma, we can end the proof by virtue of \eqref{ODI2} and \eqref{ODI3}.
\end{proof}

Lemma~\ref{ODI} will find its first application during the derivation of the following estimates for the second solution component.

\begin{lemma}\label{lem3.7}
Let $T>0$. If $\gamma\in(1,2)$, there exist positive constants $\beta>0$ and $C=C(T)>0$ such that
\begin{align}\label{v0}
&\int_{0}^{T}\int_{\Omega}|\A^{\frac{\beta+1}{2}} c_\varepsilon(x, t)|^{2}dxdt \leq C\quad \text { for all } \varepsilon \in(0,1).
\end{align}
\end{lemma}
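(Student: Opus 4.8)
The plan is to prove \eqref{v0} by testing the $\cep$-equation in \eqref{qapp} against $\A^{\beta}\cep$ for a suitably small exponent $\beta>0$ and then converting the resulting differential identity into the asserted space-time bound by means of Lemma~\ref{ODI}. I would fix $\beta\in(0,\tfrac12)$ small enough that, in addition, $\beta\le\gamma-1$ and $\beta\le4\sigma$; all three constraints leave room because $\gamma>1$ and $\sigma\in(0,\tfrac14)$. Self-adjointness of $\A$ (Lemma~\ref{lem3.6}(i)) then gives, on $(0,\infty)$,
\begin{align*}
\tfrac12\tfrac{d}{dt}\|\A^{\beta/2}\cep\|_{\Lom2}^2+\|\A^{(\beta+1)/2}\cep\|_{\Lom2}^2
=\io\tfrac{\nep}{1+\eps\nep}\,\A^{\beta}\cep-\io(\uep\cdot\nabla\cep)\,\A^{\beta}\cep.
\end{align*}
Writing $y(t):=\|\A^{\beta/2}\cep(\cdot,t)\|_{\Lom2}^2$ and $h(t):=\|\A^{(\beta+1)/2}\cep(\cdot,t)\|_{\Lom2}^2$, it remains to estimate the two right-hand terms by $\tfrac12 h(t)+a(t)y(t)+b(t)$ with nonnegative $a,b$ whose integrals over $(0,T)$ are finite uniformly in $\eps$. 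Because $\cep\in C([0,\infty);D(\A^{2\sigma}))$ and $\beta/2\le2\sigma$ make $y$ continuous on $[0,T]$ with $y(0)=\|\A^{\beta/2}c_{0\eps}\|_{\Lom2}^2$ bounded uniformly in $\eps$ by \eqref{initial1}, Lemma~\ref{ODI} will then produce $\sup_{t\in(0,T)}y(t)\le C$ and, in particular, $\int_0^T h\le C$, i.e.\ \eqref{v0}.

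The production term is harmless: writing $\A^{\beta}\cep=\A^{-(1-\beta)/2}\big(\A^{(\beta+1)/2}\cep\big)$ and combining Lemma~\ref{lem3.6}(ii) with the two-dimensional embedding $W^{1-\beta,2}(\Omega)\hookrightarrow L^{2/\beta}(\Omega)$ shows $\|\A^{\beta}\cep\|_{L^{2/\beta}(\Omega)}\le C\|\A^{(\beta+1)/2}\cep\|_{\Lom2}$, so that $\tfrac{\nep}{1+\eps\nep}\le\nep$ together with Hölder's and Young's inequalities yields $\io\tfrac{\nep}{1+\eps\nep}\A^{\beta}\cep\le\tfrac14 h+C\|\nep\|_{L^{2/(2-\beta)}(\Omega)}^2$, whose time integral is controlled by \eqref{c4'} of Corollary~\ref{cor1} since $\beta\le\gamma-1$ forces $\tfrac{2}{2-\beta}\le\tfrac{2}{3-\gamma}$. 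The convective term is the crux, and here the two-dimensional bound \eqref{w0} of Corollary~\ref{cor2} is essential. By Hölder with exponents $4,2,4$,
\begin{align*}
\Big|\io(\uep\cdot\nabla\cep)\,\A^{\beta}\cep\Big|\le C\,\|\uep\|_{\Lom4}\,\|\nabla\cep\|_{\Lom2}\,\|\A^{\beta}\cep\|_{\Lom4},
\end{align*}
and I would then interpolate \emph{both} remaining $\cep$-factors between $y^{1/2}=\|\A^{\beta/2}\cep\|_{\Lom2}$ and $h^{1/2}=\|\A^{(\beta+1)/2}\cep\|_{\Lom2}$: from $\|\nabla\cep\|_{\Lom2}\le\|\A^{1/2}\cep\|_{\Lom2}$ and Lemma~\ref{lem3.6}(iii) one gets $\|\nabla\cep\|_{\Lom2}\le C\,y^{\beta/2}h^{(1-\beta)/2}$, while $\|\A^{\beta}\cep\|_{\Lom4}\le C\|\A^{\beta+1/4}\cep\|_{\Lom2}$ (by Lemma~\ref{lem3.6}(ii) and $W^{1/2,2}(\Omega)\hookrightarrow L^4(\Omega)$) combined with Lemma~\ref{lem3.6}(iii) and $\beta\le\tfrac12$ gives $\|\A^{\beta}\cep\|_{\Lom4}\le C\,y^{(1/2-\beta)/2}h^{(\beta+1/2)/2}$. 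Multiplying, the powers of $y$ and $h$ add up to $\tfrac14$ and $\tfrac34$, so Young's inequality gives $|\io(\uep\cdot\nabla\cep)\A^{\beta}\cep|\le C\|\uep\|_{\Lom4}y^{1/4}h^{3/4}\le\tfrac14 h+C\|\uep\|_{\Lom4}^4 y$, which is of the desired form with $a(t)=C\|\uep(\cdot,t)\|_{\Lom4}^4$ and $\int_0^T a\le C$ by \eqref{w0}.

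I expect the convective estimate to be the only real obstacle. The a priori information available at this stage gives $\cep$ merely in $L^\infty((0,T);\Lom2)\cap L^2((0,T);W^{1,2}(\Omega))$ and $\uep$ merely in $L^4(\Omega\times(0,T))\cap L^\infty((0,T);\Lom2)\cap L^2((0,T);W^{1,2}_0(\Omega;ℝ^2))$, so the interpolation must be arranged so that the exponent of $h$ stays strictly below $1$ (to absorb into the dissipation) while the accompanying power of $\|\uep\|_{\Lom4}$ does not exceed $4$ (to be integrable in time by Corollary~\ref{cor2}); distributing both $\cep$-factors symmetrically between $y$ and $h$, and choosing $\beta$ small, is exactly what makes these two requirements compatible. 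The remaining points --- justifying the testing for the smooth approximate solutions, the continuity of $y$ up to $t=0$, and the uniform bound on $y(0)$ --- are routine given Lemma~\ref{lem2.1} and \eqref{initial1}.
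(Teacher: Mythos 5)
Your proposal is correct and follows essentially the same route as the paper: testing \eqref{qapp}$_2$ against $\A^{\beta}\cep$, interpolating via Lemma~\ref{lem3.6}(ii)--(iii) to obtain the factorization $y^{1/4}h^{3/4}$ for the convective term (with the same exponents as in the paper's \eqref{v4}--\eqref{v5}), absorbing via Young's inequality with weight $\|\uep\|_{\Lom4}^4$, and closing by Lemma~\ref{ODI} using Corollaries~\ref{cor1} and \ref{cor2}. The only cosmetic difference is in the production term, where you pair $L^{2/(2-\beta)}$ with $L^{2/\beta}$ instead of the paper's $L^{2/(3-\gamma)}$ with $L^{2/(\gamma-1)}$; since $\beta\le\gamma-1$ makes the former the weaker requirement on $\nep$, this is equivalent.
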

\begin{proof}
With $\gamma>1$ and $\sigma>0$ (fixed at the beginning of Section~\ref{sec3}), we take $\beta=\min\{2\sigma,\gamma-1\}$ and obtain from \eqref{initial1} that
\begin{align}\label{v1}
&\int_{\Omega}|\A^{\frac{\beta}{2}}c_{0\varepsilon}|^{2} \leq c_1\quad \text { for all } \varepsilon \in(0,1)
\end{align}
with some $c_1=c_1(\sigma,\gamma,\Omega)>0$.
By testing $\eqref{qapp}_{2}$ by $\A^{\beta}c_\varepsilon$ and utilizing the self-adjointness
of $\A$ and its powers, it follows that for all $\varepsilon\in(0,1)$
\begin{equation}\label{v2}
\frac{1}{2}\frac{d}{d t} \int_{\Omega}|\A^{\frac{\beta}{2}}c_\varepsilon|^{2}
+\int_{\Omega}|\A^{\frac{\beta+1}{2}}c_\varepsilon|^{2}
=-\int_{\Omega} (\A^{\beta}c_\varepsilon) \nabla c_\varepsilon\cdot\mathbf{u}_\varepsilon
+\int_{\Omega} \frac{n_\varepsilon \A^{\beta}c_\varepsilon}{1+\varepsilon n_\varepsilon}
\quad \text { in } (0, T).
\end{equation}
An application of H\"{o}lder's inequality implies that
\begin{equation}\label{v3}
-\int_{\Omega} \A^{\beta}c_\varepsilon \nabla c_\varepsilon\cdot\mathbf{u}_\varepsilon
\le \| \A^{\beta}c_\varepsilon\|_{L^{4}(\Omega)}
 \| \nabla c_\varepsilon\|_{L^{2}(\Omega)}
  \| \mathbf{u}_\varepsilon\|_{L^{4}(\Omega)}
\quad \text { in } (0, T)\text { for all } \varepsilon \in(0,1).
\end{equation}
Utilizing the Sobolev imbedding inequality (of \cite[Theorem 6.7]{NPV}) and Lemma \ref{lem3.6}, we can see
\begin{align}\label{v4}
\| \A^{\beta}c_\varepsilon\|_{L^{4}(\Omega)}
&\le c_2\|\A^{\beta}c_\varepsilon\|_{W^{\frac{1}{2}, 2}(\Omega)}\nonumber\\
&\le c_3\|\A^{\beta+\frac{1}{4}}c_\varepsilon\|_{L^{2}(\Omega)}\nonumber\\
&\le c_4\|\A^{\frac{\beta+1}{2}}c_\varepsilon\|^{\beta+\frac{1}{2}}_{L^{2}(\Omega)}
\|\A^{\frac{\beta}{2}}c_\varepsilon\|^{1-\beta-\frac{1}{2}}_{L^{2}(\Omega)}
\quad \text { in }(0, T)\text { for all } \varepsilon \in(0,1)
\end{align}
with positive constants $c_2,c_3,c_4$ only depending on $\sigma,\gamma$ and $\Omega$.
Similarly, we also have $c_5,c_6>0$ such that
\begin{align}\label{v5}
\| \nabla c_\varepsilon\|_{L^{2}(\Omega)}
&\le c_5\|\A^{\frac{1}{2}}c_\varepsilon\|_{L^{2}(\Omega)}\nonumber\\
&\le c_6\|\A^{\frac{\beta+1}{2}}c_\varepsilon\|^{1-\beta}_{L^{2}(\Omega)}
\|\A^{\frac{\beta}{2}}c_\varepsilon\|^{\beta}_{L^{2}(\Omega)}
\quad \text { in }(0, T)\text { for all } \varepsilon \in(0,1).
\end{align}
Inserting \eqref{v4} and \eqref{v5} into \eqref{v3} and applying Young's inequality give that
\begin{align}
-\int_{\Omega} \A^{\beta}c_\varepsilon \nabla c_\varepsilon\cdot\mathbf{u}_\varepsilon
&\le c_4c_6\|\A^{\frac{\beta+1}{2}}c_\varepsilon\|^{\frac{3}{2}}_{L^{2}(\Omega)}
\|\A^{\frac{\beta}{2}}c_\varepsilon\|^{\frac{1}{2}}_{L^{2}(\Omega)}
 \| \mathbf{u}_\varepsilon\|_{L^{4}(\Omega)}\nonumber\\
&\le\frac{1}{4}\|\A^{\frac{\beta+1}{2}}c_\varepsilon\|^2_{L^{2}(\Omega)}
+\frac{27}{4}c^4_4c^4_6\|\A^{\frac{\beta}{2}}c_\varepsilon\|^{2}_{L^{2}(\Omega)}
 \| \mathbf{u}_\varepsilon\|^4_{L^{4}(\Omega)}
\end{align}
in $(0, T)$ for all $\varepsilon \in(0,1)$.
Since $\beta\le\gamma-1$, Lemma \ref{lem3.6} (ii) ensures the existence of $c_7,c_8>0$ satisfying
\begin{align}\label{v6}
\|\A^{\beta}c_\varepsilon\|_{L^{\frac{2}{\gamma-1}}(\Omega)}
\le c_7\|\A^{\beta}c_\varepsilon\|_{W^{2-\gamma,2}(\Omega)}
\le
c_8\|\A^{\frac{\beta+1}{2}}c_\varepsilon\|_{L^{2}(\Omega)} \text{ in } (0,T) \text{ for all } \eps \in(0,1).
\end{align}
Thus, it can be obtained by the H\"{o}lder inequality (due to $γ\in(1,3)$ ensuring $\frac{2}{3-γ}>1$ and $\frac{2}{γ-1}>1$)
and Young's inequality that
\begin{align*}
-\int_{\Omega}n_\varepsilon \A^{\beta}c_{\varepsilon }
 &\le
 \| n_\varepsilon\|_{L^{\frac{2}{3-\gamma}}(\Omega)}
 \|\A^{\beta}c_\varepsilon\|_{L^{\frac{2}{\gamma-1}}(\Omega)}\nonumber\\
   &\le
   c_8^2\| n_\varepsilon\|^2_{L^{\frac{2}{3-\gamma}}(\Omega)}+
 \frac{1}{4c_8^2}
 \|\A^{\beta}c_\varepsilon\|^2_{L^{\frac{2}{\gamma-1}}(\Omega)}\nonumber\\
   &\le
   c_8^2\| n_\varepsilon\|^2_{L^{\frac{2}{3-\gamma}}(\Omega)}+
 \frac{1}{4}
 \|\A^{\frac{\beta+1}{2}}c_\varepsilon\|^2_{L^{2}(\Omega)}
 ~~\text { in }(0, T)\text { for all } \varepsilon \in(0,1).
\end{align*}
Together with \eqref{v2}, \eqref{v5} and \eqref{v6}, this shows that
\begin{align}\label{v7}
&\quad\frac{d}{d t} \int_{\Omega}|\A^{\frac{\beta}{2}}c_\varepsilon|^{2}
+\int_{\Omega}|\A^{\frac{\beta+1}{2}}c_\varepsilon|^{2} \nonumber\\
&\le c_{9}\| n_\varepsilon\|^2_{L^{\frac{2}{3-\gamma}}(\Omega)}
+c_{9}\|\A^{\frac{\beta}{2}}c_\varepsilon\|^{2}_{L^{2}(\Omega)}
 \| \mathbf{u}_\varepsilon\|^4_{L^{4}(\Omega)}
~~\text { in } (0, T)\text { for all } \varepsilon \in(0,1)
\end{align}
with $c_{9}=\max\{2c_8^2,\frac{27}2c_4^4c_6^4\}>0$. In order to apply Lemma~\ref{ODI} with
\begin{align*}
y(t)&=\norm[\Lom2]{\A^{\frac{\beta}2}\cep(\cdot,t)}, \qquad& h(t)&=\norm[\Lom2]{\A^{\frac{\beta+1}2}\cep(\cdot,t)},\quad\\ a(t)&=c_9\norm[\Lom4]{\uep(\cdot,t)}^4,& b(t)&=c_9\norm[\Lom{\frac{2}{3-γ}}]{\nep(\cdot,t)},&&\qquad t\in(0,T),
\end{align*}
and $\tau=0$, we note that Corollary~\ref{cor2} and Corollary~\ref{cor1} ensure \eqref{ODIcondition}, as long as $γ\in(1,2)$, and by \eqref{v1}, we may conclude \eqref{v0} from \eqref{ODIresult}.
\end{proof}
The final outcome of the previous bounds on $\cep$ is summarized in the following compactness statement, which will be directly applicable in the convergence proofs in Section~\ref{sec:convergence}.
\begin{lemma}\label{lem3.8}
Assume $\gamma\in(1,2)$. Let $T>0$. Then
$\{c_\varepsilon\}_{\varepsilon\in(0,1)}$ is relatively compact with respect to the strong topology in $L^2\big((0, T);W^{1,2}(\Omega)\big)$.
\end{lemma}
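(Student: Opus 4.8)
The plan is to obtain the asserted compactness from an Aubin--Lions-type lemma, combining the spatial regularity of $\cep$ already at our disposal with a uniform-in-$\eps$ bound on $\partial_t\cep$ in a suitable dual space. First I would record the relevant a priori estimates. By Lemma~\ref{lem3.7} there is $\beta>0$ --- which, in view of the choice $\beta=\min\{2\sigma,\gamma-1\}$ made there together with $\sigma\in(0,\frac14)$ and $\gamma<2$, may be assumed to lie in $(0,1)$ --- such that $\{\cep\}_{\eps\in(0,1)}$ is bounded in $L^2((0,T);D(\A^{\frac{\beta+1}2}))$, and hence, by Lemma~\ref{lem3.6}(ii), also bounded in $L^2((0,T);W^{\beta+1,2}(\Omega))$. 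Since $\Omega$ is bounded and $\beta>0$, the embedding $W^{\beta+1,2}(\Omega)\hookrightarrow W^{1,2}(\Omega)$ is compact, while $W^{1,2}(\Omega)\hookrightarrow L^2(\Omega)\hookrightarrow(W^{1,2}(\Omega))^*$ holds continuously. By the Aubin--Lions lemma it therefore suffices to show that $\{\partial_t\cep\}_{\eps\in(0,1)}$ is bounded in $L^1((0,T);(W^{1,2}(\Omega))^*)$; I will in fact produce an $L^{4/3}$-in-time bound.

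To estimate $\partial_t\cep$, I would test $\eqref{qapp}_2$ against an arbitrary $\varphi\in W^{1,2}(\Omega)$ and integrate by parts in the diffusion term, using the homogeneous Neumann condition for $\cep$, so that $\io\partial_t\cep\,\varphi$ equals $-\io\nabla\cep\cdot\nabla\varphi-\io\cep\varphi+\io\frac{\nep}{1+\eps\nep}\varphi-\io(\uep\cdot\nabla\cep)\varphi$. The first two contributions are bounded by $(\norm[\Lom2]{\nabla\cep}+\norm[\Lom2]{\cep})\norm[W^{1,2}(\Omega)]{\varphi}$; the third, using $\frac{\nep}{1+\eps\nep}\le\nep$, Hölder's inequality and the two-dimensional embedding $W^{1,2}(\Omega)\hookrightarrow L^{\frac2{\gamma-1}}(\Omega)$ (valid since $\gamma\in(1,2)$, as already exploited in Lemma~\ref{lem3.5}), by $c\norm[\Lom{\f2{3-\gamma}}]{\nep}\norm[W^{1,2}(\Omega)]{\varphi}$; and the fourth, by Hölder's inequality and $W^{1,2}(\Omega)\hookrightarrow L^4(\Omega)$, by $c\norm[\Lom4]{\uep}\norm[\Lom2]{\nabla\cep}\norm[W^{1,2}(\Omega)]{\varphi}$. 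Hence $\norm[(W^{1,2}(\Omega))^*]{\partial_t\cep(\cdot,t)}$ is controlled by a constant multiple of $\norm[\Lom2]{\nabla\cep(\cdot,t)}+\norm[\Lom2]{\cep(\cdot,t)}+\norm[\Lom{\f2{3-\gamma}}]{\nep(\cdot,t)}+\norm[\Lom4]{\uep(\cdot,t)}\norm[\Lom2]{\nabla\cep(\cdot,t)}$, and each summand is bounded in $L^{4/3}((0,T))$ uniformly in $\eps$: the first three even in $L^2((0,T))$ by \eqref{u0}, \eqref{u0'} and Corollary~\ref{cor1}, and the last in $L^{4/3}((0,T))$ by one further Hölder step ($\f14+\f12=\f34$) combined with \eqref{u0'} and Corollary~\ref{cor2}. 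This yields the required bound on $\{\partial_t\cep\}_{\eps\in(0,1)}$ and, via Aubin--Lions, the claim.

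I expect the only genuinely delicate point to be the handling of the convective term $\uep\cdot\nabla\cep$ in the estimate for $\partial_t\cep$: this is where two-dimensionality enters in an essential way, through the $\eps$-uniform bound $\uep\in L^4((0,T);L^4(\Omega))$ of Corollary~\ref{cor2} (which itself rests on Lemma~\ref{lem3.3} and Corollary~\ref{cor1}); in a higher-dimensional setting this term would not be controllable with the regularity at hand. Everything else --- fixing the Aubin--Lions triple $W^{\beta+1,2}(\Omega)\hookrightarrow W^{1,2}(\Omega)\hookrightarrow(W^{1,2}(\Omega))^*$ (with the first embedding compact) and assembling the individual bounds --- is routine once Lemmata~\ref{lem3.5}, \ref{lem3.7} and Corollaries~\ref{cor1}, \ref{cor2} are in place.
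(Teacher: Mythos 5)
Your proof is correct and rests on the same two pillars as the paper's: the $L^2((0,T);D(\A^{(\beta+1)/2}))$-bound from Lemma~\ref{lem3.7} to supply compactness into $W^{1,2}(\Omega)$, and a uniform time-integrable bound on $\partial_t\cep$ in a dual space, combined through Aubin--Lions. The one place where you diverge from the paper is in the choice of the weak endpoint of the Aubin--Lions triple and, as a consequence, in how the convective term is absorbed. The paper tests against $\xi\in C_0^\infty(\Omega)$ and measures $\partial_t\cep$ in $(W_0^{1,3}(\Omega))^*$; since $W_0^{1,3}(\Omega)\hookrightarrow L^\infty(\Omega)$ in two dimensions, the term $\io(\uep\cdot\nabla\cep)\xi$ can be bounded by $\norm[\Lom2]{\uep}\norm[\Lom2]{\nabla\cep}\norm[\Lom\infty]{\xi}$ and then the $L^\infty_tL^2_x$-bound \eqref{b0} on $\uep$ yields an $L^2$-in-time bound on $\partial_t\cep$. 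You instead test against $\varphi\in W^{1,2}(\Omega)$, which only lands in $L^4(\Omega)$, so you are forced onto the pairing $\norm[\Lom4]{\uep}\norm[\Lom2]{\nabla\cep}$, invoke the $L^4_tL^4_x$-bound of Corollary~\ref{cor2}, and settle for $L^{4/3}$ in time. Both routes are perfectly adequate for Aubin--Lions (only $L^1$ in time is needed), so the tradeoff is purely cosmetic: your dual space $(W^{1,2}(\Omega))^*$ is smaller, at the cost of weaker time integrability and one extra Hölder step. Your identification of the convective term as the only genuinely delicate point, and of two-dimensionality (through $\uep\in L^4_tL^4_x$) as what makes it manageable, matches the structure of the paper's argument.
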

\begin{proof}
Multiplying the second equation in \eqref{qapp} by $\xi \in C_0^\infty(\Omega)$
and integrating by parts show that
\begin{align}\label{e1}
\left|\int_{\Omega} c_{\varepsilon t}(\cdot, t)\xi\right| &=\left|-
\int_{\Omega}\nabla c_\varepsilon\cdot\mathbf{u}_\varepsilon \xi
-\int_{\Omega}\nabla c_\varepsilon\cdot\nabla \xi-\int_{\Omega} c_\varepsilon\xi
+\int_{\Omega} \frac{n_{\varepsilon}}{1+\varepsilon n_{\varepsilon}}\xi \right|\nonumber \\
 & \leq
\left\|\nabla c_\varepsilon(\cdot, t)\right\|_{L^{2}(\Omega)}\|\mathbf{u}_\varepsilon\|_{L^{2}(\Omega)}\|\xi\|_{L^\infty(\Omega)}
\quad+\left\| c_\varepsilon(\cdot, t)\right\|_{L^{2}(\Omega)}\|\xi\|_{L^{2}(\Omega)}\nonumber
\\&\quad+ \left\|\nabla c_\varepsilon(\cdot, t)\right\|_{L^{2}(\Omega)}\|\nabla \xi\|_{L^{2}(\Omega)}
 +\left\|n_{\varepsilon}(\cdot, t)\right\|_{L^{1}(\Omega)}
\|\xi\|_{L^{\infty}(\Omega)}
\end{align}
We deduce from \eqref{e1} and the embedding $W_0^{1,3}(\Omega)\hookrightarrow L^\infty(\Omega)\cap W^{1,2}_0(\Omega)$ that there is $c_1>0$ satisfying
$$
\left\|c_{\varepsilon t}(\cdot, t)\right\|_{\big(W_0^{1,3}(\Omega)\big)^{*}}
\leq c_1\Big(\left\|c_{\varepsilon}(\cdot, t)\right\|_{W^{1,2}(\Omega)}
+\|n_{\varepsilon}(\cdot, t)\|_{L^{1}(\Omega)}\Big)\quad \text { for all } t\in(0,T)\text { and } \varepsilon \in(0,1),
$$
which along with Lemma \ref{lem3.7} and \eqref{c2} leads to the observation that
\begin{align}\label{e2}
\left\{c_{\varepsilon t}\right\}_{\varepsilon \in(0,1)} \text { is uniformly bounded in } L^{2}\Big((0, T);\big(W_0^{1,3}(\Omega)\big)^{*}\Big).
\end{align}
Since $D(\A^{\frac{\beta+1}{2}})$ is compactly embedded into $W^{1,2}(\Omega)$ with $\beta>0$ determined in Lemma \ref{lem3.7},
the claim results from \eqref{v0}, \eqref{e2} and the Aubin-Lions lemma.\end{proof}
\subsection{Estimates for \texorpdfstring{$\{n_{\varepsilon}\}_{\varepsilon\in(0,1)}$}{\{n\_ε \}}}
We begin this subsection with the uniform integrability involving $n_{\varepsilon}$.
\begin{lemma}\label{lem3.9}
Let $T>0$. Then
\begin{align}\label{k0}
 \{n_{\varepsilon}\}_{\varepsilon \in(0,1)}~\text{is~ uniformly ~integrable~ over~ }\Omega \times(0, T).
\end{align}
Moreover, we have that both
\begin{align}\label{k0'}
\left\{\left(n_{\varepsilon}+1\right)^{-1}n^2_{\varepsilon}\right\}_{\varepsilon \in(0,1)}
~~\text{and }~~\left\{\left(n_{\varepsilon}+1\right)^{-1} f\left(n_{\varepsilon}\right) \right\}_{\varepsilon \in(0,1)}\text{are uniformly integrable over $\Omega \times(0, T)$.}
\end{align}

\end{lemma}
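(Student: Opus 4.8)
The plan is to combine the $L^\gamma$-bound of Lemma~\ref{lem3.2} with a de~la~Vall\'ee~Poussin-type argument. For \eqref{k0}, since $\gamma>1$, the bound \eqref{c3} already gives $\int_0^T\int_\Omega n_\varepsilon^\gamma\le C$ uniformly in $\varepsilon$; because $s\mapsto s^\gamma$ is superlinear, this uniform $L^\gamma$-bound on the bounded set $\Omega\times(0,T)$ immediately yields uniform integrability of $\{n_\varepsilon\}$ via the standard criterion (for any $\delta>0$ and any measurable $E$, $\int_E n_\varepsilon\le |E|^{1-1/\gamma}(\int_E n_\varepsilon^\gamma)^{1/\gamma}\le C|E|^{1-1/\gamma}$). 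So \eqref{k0} is essentially a one-line consequence of Lemma~\ref{lem3.2}.

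For \eqref{k0'} the first family is easy: $(n_\varepsilon+1)^{-1}n_\varepsilon^2\le n_\varepsilon$ pointwise, so uniform integrability of $\{(n_\varepsilon+1)^{-1}n_\varepsilon^2\}$ follows from \eqref{k0} by domination. The second family requires using the structural hypothesis \eqref{f}. First I would split $f(n_\varepsilon)=f_+(n_\varepsilon)-f_-(n_\varepsilon)$. On the positive part, $f(s)\le rs-\mu s^\gamma$ together with $f(0)=0$ and $f\in C^1$ forces $f_+(s)\le r_+s$ (indeed $f(s)\le rs$ for $s\ge0$, so $(n_\varepsilon+1)^{-1}f_+(n_\varepsilon)\le r_+ n_\varepsilon$), hence this part is uniformly integrable again by \eqref{k0}. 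For the negative part one uses $f_-(s)\ge \mu s^\gamma - rs$ for $s$ large, i.e. $f_-(s)\le |f(s)|$ and, more usefully, $f_-$ is controlled from above by a multiple of $s^\gamma+1$ near infinity while being bounded on compact sets (again by $f\in C^1$); thus $(n_\varepsilon+1)^{-1}f_-(n_\varepsilon)\le C(n_\varepsilon^{\gamma-1}+1)$, and since $\gamma-1<\gamma$ this is uniformly integrable by the same de~la~Vall\'ee~Poussin argument applied to the $L^\gamma$-bound \eqref{c3} (or more directly, $(n_\varepsilon+1)^{-1}|f(n_\varepsilon)|\le |f(n_\varepsilon)|$ together with \eqref{c1} gives a uniform $L^1$-bound, but for uniform \emph{integrability} one genuinely needs the extra power coming from $\gamma>1$).

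The main technical point — and the only place where real care is needed — is turning the asymptotic inequality $f(s)\le rs-\mu s^\gamma$ into a genuine pointwise upper bound for $(n_\varepsilon+1)^{-1}|f(n_\varepsilon)|$ of the form $C(n_\varepsilon^{\gamma-1}+1)$ valid for \emph{all} values of $n_\varepsilon$, not just large ones; this is where $f\in C^1([0,\infty))$ with $f(0)=0$ is used to bound $f$ on compact sets. Once that pointwise bound is in hand, uniform integrability of $\{(n_\varepsilon+1)^{-1}f(n_\varepsilon)\}$ reduces to uniform integrability of $\{n_\varepsilon^{\gamma-1}\}$ over the bounded set $\Omega\times(0,T)$, which follows from \eqref{c3} because $\gamma/(\gamma-1)>1$. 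I would therefore organize the proof as: (i) record the $L^\gamma$-bound and deduce \eqref{k0}; (ii) dominate $(n_\varepsilon+1)^{-1}n_\varepsilon^2$ by $n_\varepsilon$; (iii) establish the pointwise bound $(n_\varepsilon+1)^{-1}|f(n_\varepsilon)|\le C(n_\varepsilon^{\gamma-1}+1)$ from \eqref{f} and continuity of $f$; (iv) conclude uniform integrability of the $f$-family from the $L^\gamma$-bound, finishing \eqref{k0'}.
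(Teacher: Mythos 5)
Your proposal follows the same strategy as the paper: extract a pointwise bound of the form $\tfrac{|f(z)|}{1+z}\le c(1+z^{\alpha})$, raise to the power $\gamma/\alpha>1$, and invoke Lemma~\ref{lem3.2} with the de la Vall\'ee Poussin criterion. The paper unifies all three families by working with $g(z)\in\{z,\ z^2/(1+z),\ |f(z)|/(1+z)\}$ and the single exponent $\alpha=\max\{1,\gamma-1\}$; your exponent $\gamma-1$ is strictly tighter for $\gamma<2$ (though it would still interface correctly with \eqref{c3}).

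The soft spot — and you explicitly flag it as the only delicate step — is the pointwise bound in (iii). That bound does \emph{not} follow from \eqref{f} and $f\in C^1$: assumption \eqref{f} only constrains $f$ from \emph{above}, so $f$ may decay to $-\infty$ arbitrarily fast. For instance $f(s)=-s^{10}$ is admissible with $\gamma=3/2$, yet then $|f(z)|/(1+z)\sim z^{9}$, far beyond $z^{\gamma-1}$ or $z^{\max\{1,\gamma-1\}}$, and the $L^\gamma$-based argument collapses. Your justification that ``$f\in C^1$ with $f(0)=0$ is used to bound $f$ on compact sets'' addresses the harmless regime; the danger is at infinity. To be clear, the paper's own proof makes the identical tacit assumption on $f_-$. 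A bound-free route exists and is cleaner: use the $L^1$-estimate $\int_0^T\int_\Omega f_-(n_\varepsilon)\le C$ from Lemma~\ref{lem3.1} directly in the definition of uniform integrability, splitting any measurable $E\subset\Omega\times(0,T)$ into $E\cap\{n_\varepsilon>M\}$ (where $f_-(n_\varepsilon)/(1+n_\varepsilon)\le f_-(n_\varepsilon)/M$, contributing at most $C/M$) and $E\cap\{n_\varepsilon\le M\}$ (where $f_-\le\sup_{[0,M]}f_-<\infty$ by continuity, contributing at most $|E|\sup_{[0,M]}f_-$), then choosing $M$ and $|E|$ small in that order.
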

\begin{proof}
We let $g(z)=z$, $g(z)=\f{z^2}{1+z}$ or $g(z)=\f{|f(z)|}{1+z}$ for the proofs of \eqref{k0} or the first or second part of \eqref{k0'}, respectively. In each of these cases, there is $c_1>0$ such that
\[
 |g(z)| = g(z) \le c_1+c_1z^{\max\{1,\gamma-1\}} \qquad \text{for every } z\ge 0
\]
and according to Lemma~\ref{lem3.2}, we therefore can find $c_2>0$ satisfying
\[
 \int_0^T\io |g(\nep)|^{\frac{\gamma}{\max\{1,\gamma-1\}}} \le c_2 \quad\text{ for every }\eps \in(0,1),
\]
by the de la Vallée-Poussin theorem proving uniform integrability of $\{g(\nep)\}_{\eps \in(0,1)}$.
\end{proof}

In order to conclude $L^1$-convergence from uniform integrability by means of Vitali's convergence theorem, we will additionally require some convergence in measure. Aiming to obtain this along a subsequence obtained from application of an Aubin-Lions lemma to $\ln(\nep+1)$, we thus prepare the following estimates of derivatives of the latter.

\begin{lemma}\label{lem3.10}
Let $T>0$ and $γ\in(1,2)$. Then we can find $C=C(T)>0$ such that
\begin{align}\label{y0}
\int_{0}^{T} \int_{\Omega}\left|\nabla\ln\left(n_{\varepsilon}+1\right)\right|^{2} \leq C \quad \text { for all } \varepsilon \in(0,1)
\end{align}
and
\begin{align}\label{y0'}
\int_{0}^{T}
\big\|\partial_{t}\ln\left(n_{\varepsilon}(\cdot, t)+1\right)\big\|_{\left(W^{1, 3}(\Omega)\right)^{*}} d t \leq C \quad \text { for all } \varepsilon \in(0,1).
\end{align}
\end{lemma}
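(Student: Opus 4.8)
The natural starting point is the identity of Lemma~\ref{lem:testing} specialized to $F(s)=\ln s$, i.e. Lemma~\ref{lem2.2}, because the left-hand side there is precisely $\partial_t\io\ln(\nep+1)\varphi$, whose negative gradient term $\io|\nabla\ln(\nep+1)|^2\varphi$ gives us exactly the quantity we want to control in \eqref{y0}. To obtain \eqref{y0} I would test \eqref{qapp}$_1$ directly against $(\nep+1)^{-1}$ (equivalently, use Lemma~\ref{lem2.2} with $\varphi\equiv1$): this yields
\[
\tfrac{d}{dt}\io\ln(\nep+1)+\io|\nabla\ln(\nep+1)|^2
=-\io(\nep+1)^{-2}\nep\,\nabla\nep\cdot\nabla\cep+\io(\nep+1)^{-1}f(\nep)-\eps\io(\nep+1)^{-1}\nep^2.
\]
The convection term has vanished because $\nabla\cdot\uep=0$ and $\uep=0$ on $\partial\Omega$. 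Now $-\io(\nep+1)^{-2}\nep\,\nabla\nep\cdot\nabla\cep=-\io\nabla\ln(\nep+1)\cdot\nabla\cep\cdot\tfrac{\nep}{\nep+1}$ can be estimated by Young's inequality as $\tfrac12\io|\nabla\ln(\nep+1)|^2+\tfrac12\io|\nabla\cep|^2$ (using $\tfrac{\nep}{\nep+1}\le1$), the second dropped term is nonpositive, and $\io(\nep+1)^{-1}f(\nep)\le\io|f(\nep)|$. Integrating over $(0,T)$ and invoking \eqref{c1} of Lemma~\ref{lem3.1}, \eqref{u0'} of Lemma~\ref{lem3.5}, and the fact that $\io\ln(\nep+1)\le\io\nep$ is bounded by \eqref{c2}, one obtains \eqref{y0}; the left endpoint term $\io\ln(n_{0\eps}+1)$ is controlled via \eqref{initial2}.

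For \eqref{y0'} I would return to the full identity of Lemma~\ref{lem2.2} with a test function $\varphi=\xi\in C^\infty(\overline\Omega)$ (no time dependence), so that the left-hand side is $\io\partial_t\ln(\nep+1)\,\xi$, and estimate each of the six resulting spatial integrals by $\|\xi\|_{W^{1,3}(\Omega)}$ times an $\eps$-independent $L^1_t$ quantity. Concretely: the terms $\io|\nabla\ln(\nep+1)|^2\xi$ and $\io(\nep+1)^{-1}\nabla\nep\cdot\nabla\xi=\io\nabla\ln(\nep+1)\cdot\nabla\xi$ are bounded using \eqref{y0} and Hölder; $\io\ln(\nep+1)\,\uep\cdot\nabla\xi$ uses $\ln(\nep+1)\le\nep$, Hölder with exponents making use of the $L^2_tL^2_x$ bound on $\uep$ from \eqref{b0}–\eqref{b0'} (or the $L^4$ bound from Corollary~\ref{cor2}) together with \eqref{c2}; the two chemotaxis contributions $\io(\nep+1)^{-2}\nep\,\nabla\nep\cdot\nabla\cep\,\xi=\io\nabla\ln(\nep+1)\cdot\nabla\cep\,\tfrac{\nep}{\nep+1}\xi$ and $\io(\nep+1)^{-1}\nep\,\nabla\cep\cdot\nabla\xi$ are controlled by $\io|\nabla\ln(\nep+1)|^2+\io|\nabla\cep|^2$ times $\|\xi\|_{L^\infty}$, hence by \eqref{y0} and \eqref{u0'}; finally $\io(\nep+1)^{-1}f(\nep)\xi$ is bounded by $\io|f(\nep)|\cdot\|\xi\|_{L^\infty}$ via \eqref{c1}, and the regularizing term $\eps\io(\nep+1)^{-1}\nep^2\xi\le\eps\io\nep^2\|\xi\|_{L^\infty}$ is bounded since $\{\eps\nep^2\}$ sits inside the uniformly integrable family by \eqref{k0'} (or simply by \eqref{c4'}). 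Collecting, $\|\partial_t\ln(\nep(\cdot,t)+1)\|_{(W^{1,3}(\Omega))^*}$ is dominated by a function in $L^1(0,T)$ uniformly in $\eps$, which is \eqref{y0'}. Here I use $W^{1,3}(\Omega)\hookrightarrow L^\infty(\Omega)$ in the two-dimensional setting, so that $\|\xi\|_{L^\infty}$ and $\|\nabla\xi\|_{L^2}\le|\Omega|^{1/6}\|\nabla\xi\|_{L^3}$ are both majorized by $\|\xi\|_{W^{1,3}(\Omega)}$.

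The main obstacle is keeping every integrand genuinely $\eps$-uniform: the delicate points are (i) the coupling term $\io\nabla\ln(\nep+1)\cdot\nabla\cep\,\tfrac{\nep}{\nep+1}\,(\cdot)$, which must be handled by pairing $\nabla\ln(\nep+1)\in L^2_tL^2_x$ (from \eqref{y0}) with $\nabla\cep\in L^2_tL^2_x$ (from \eqref{u0'}) and bounding $\tfrac{\nep}{\nep+1}\le1$, so that no bound on $\nep$ in a high norm is needed; and (ii) ensuring the convective term $\io\ln(\nep+1)\uep\cdot\nabla\xi$ stays in $L^1_t$ — this is where two-dimensionality enters through $W^{1,3}\hookrightarrow L^\infty$ and the bounds $\uep\in L^2_tL^2_x$, $\nep\in L^\infty_tL^1_x$. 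Once these are in place the remaining estimates are routine Hölder/Young manipulations, and the bound \eqref{y0} feeds directly into \eqref{y0'}.
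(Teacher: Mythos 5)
Your overall strategy matches the paper's: set $\varphi\equiv 1$ in Lemma~\ref{lem2.2} to obtain \eqref{y0}, then test against spatial $\xi$ for \eqref{y0'}. However, there are two points that need attention, one cosmetic and one a genuine gap.

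First, a sign slip: testing $\partial_t n_\varepsilon=\Delta n_\varepsilon+\ldots$ against $(n_\varepsilon+1)^{-1}$ produces, after integration by parts, $+\int_\Omega|\nabla\ln(n_\varepsilon+1)|^2$ on the right-hand side, not a dissipative $-\int_\Omega|\nabla\ln(n_\varepsilon+1)|^2$. So the correct identity is $\frac{d}{dt}\int_\Omega\ln(n_\varepsilon+1)-\int_\Omega|\nabla\ln(n_\varepsilon+1)|^2=-\int_\Omega(n_\varepsilon+1)^{-2}n_\varepsilon\nabla n_\varepsilon\cdot\nabla c_\varepsilon+\ldots$, rather than the version with a $+$ sign as you wrote. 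This does not break the argument — after Young absorption both boundary terms $\int_\Omega\ln(n_\varepsilon(\cdot,T)+1)\le\int_\Omega n_\varepsilon(\cdot,T)$ and $\int_\Omega\ln(n_{0\varepsilon}+1)\le\int_\Omega n_{0\varepsilon}$ are controlled — but the sign should be fixed.

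The genuine gap is in your treatment of the convective term $\int_\Omega\ln(n_\varepsilon+1)\,\mathbf{u}_\varepsilon\cdot\nabla\xi$ for \eqref{y0'}. You propose to bound $\ln(n_\varepsilon+1)\le n_\varepsilon$ and then Hölder. This discards exactly the integrability gain that makes the estimate close: with only $n_\varepsilon\in L^\infty_tL^1_x$ (and $L^\gamma_{t,x}$ with $\gamma$ possibly just above $1$), $\mathbf{u}_\varepsilon\in L^4_{t,x}$ and $\nabla\xi\in L^3$, no admissible Hölder split in space satisfies $\frac1p+\frac14+\frac13\le1$ with $n_\varepsilon\in L^p_x$ available uniformly, and the $L^2_{t,x}$ bound on $\mathbf{u}_\varepsilon$ fares no better. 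The paper instead exploits that $(\ln(1+s))^2\le Cs$, so that $\|\ln(n_\varepsilon+1)\|_{L^2(\Omega)}$ is bounded uniformly in $t$ and $\varepsilon$ via \eqref{c2}; combined with \eqref{y0} and the two-dimensional Gagliardo–Nirenberg inequality, this yields $\|\ln(n_\varepsilon+1)(\cdot,t)\|_{L^4(\Omega)}^2\lesssim 1+\|\nabla\ln(n_\varepsilon+1)(\cdot,t)\|_{L^2(\Omega)}^2\in L^1(0,T)$, which pairs with $\|\mathbf{u}_\varepsilon\|_{L^4(\Omega)}^2\in L^1(0,T)$ from Corollary~\ref{cor2} after Young's inequality. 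The fix is to keep $\ln(n_\varepsilon+1)$ intact in this term and apply Gagliardo–Nirenberg to it directly — your reference to $W^{1,3}\hookrightarrow L^\infty$ is not what is needed here since this factor carries $\nabla\xi$, not $\xi$.

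Your estimates for the remaining terms are fine, with the small caveat that the $\varepsilon$-term should be bounded as $\varepsilon\int_\Omega(n_\varepsilon+1)^{-1}n_\varepsilon^2\,\xi\le\|n_\varepsilon\|_{L^1(\Omega)}\|\xi\|_{L^\infty(\Omega)}$ rather than via $\varepsilon\int_\Omega n_\varepsilon^2$, which is not controlled for $\gamma<2$.
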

\begin{proof}
We choose $ \varphi\equiv1 $ in Lemma \ref{lem2.2} and integrate \eqref{a0} with respect to $t\in(0, T)$ to get
\begin{align}\label{y1}
\quad\int_{0}^{T} \int_{\Omega}\left|\nabla\ln\left(n_{\varepsilon}+1\right)\right|^{2}
&=\int_{\Omega}\ln\big(n_{\varepsilon}(\cdot, T)+1\big)-\int_{\Omega}\ln\left(n_{0\varepsilon}+1\right)
+\int_{0}^{T} \int_{\Omega}n_\varepsilon\left(n_{\varepsilon}+1\right)^{-2}
\nabla n_\varepsilon \cdot\nabla c_{\varepsilon}\nonumber\\
&\quad-\int_{0}^{T} \int_{\Omega}\left(n_{\varepsilon}+1\right)^{-1}f(n_\varepsilon)
+\varepsilon \int_{0}^{T} \int_{\Omega}\left(n_{\varepsilon}+1\right)^{-1}
n^2_{\varepsilon}
\quad\text{for~all~}\varepsilon\in(0,1),
\end{align}
where the term involving $\nabla \cep$ can be estimated due to $\f{\nep}{1+\nep}\le 1$ and by Young's inequality according to
\begin{align}\label{y2}
 &\int_{0}^{T} \int_{\Omega}n_\varepsilon\left(n_{\varepsilon}+1\right)^{-2}
\nabla n_\varepsilon \cdot\nabla c_{\varepsilon}
\le& \frac{1}{2}\int_{0}^{T} \int_{\Omega}\left|\nabla\ln\left(n_{\varepsilon}+1\right)\right|^{2}
+\frac{1}{2}\int_{0}^{T} \int_{\Omega}\left|\nabla c_{\varepsilon}\right|^{2}
\quad\text{for~all~}\varepsilon\in(0,1).
\end{align}
Lemma \ref{lem3.1} yields a constant $c_1>0$ such that
\begin{align}\label{y3}
-\int_{0}^{T} \int_{\Omega}\left(n_{\varepsilon}+1\right)^{-1} f\left(n_{\varepsilon}\right)
\le \int_{0}^{T} \int_{\Omega}|f( n_{\varepsilon})|\le c_1
\quad\text{for~all~}\varepsilon\in(0,1)
\end{align}
and
\begin{align}\label{y3'}
\varepsilon\int_{0}^{T} \int_{\Omega}\left(n_{\varepsilon}+1\right)^{-1} n^2_{\varepsilon}
\le \int_{0}^{T} \int_{\Omega} n_{\varepsilon}\le c_1
\quad\text{for~all~}\varepsilon\in(0,1).
\end{align}
Inserting \eqref{y2}-\eqref{y3'} into \eqref{y1} and invoking Lemma \ref{lem3.5},
we arrive at the claim \eqref{y0}.

We now turn to the assertion \eqref{y0'}.
Taking $\xi\in C^{\infty}(\Ombar)$, we may invoke Lemma~\ref{lem2.2} for the function $\varphi$ defined by $\varphi(\cdot,t)=\xi$ for all $t>0$, obtaining
\begin{align}
&\quad\left|\int_{\Omega}\partial_{t}\ln\left(n_{\varepsilon}(\cdot, t)+1\right)\xi\right|\nonumber\\
 &=\Big|\int_{\Omega} (n_\varepsilon+1)^{-2}|\nabla n_\varepsilon|^2\xi
 -\int_{\Omega} (n_\varepsilon+1)^{-1}\nabla n_\varepsilon\cdot\nabla\xi
 \nonumber\\
 &\quad+\int_{\Omega}\ln(n_\varepsilon+1) \mathbf{u}_\varepsilon\cdot\nabla \xi
 -\int_{\Omega} (n_\varepsilon+1)^{-2} n_\varepsilon
 \nabla n_\varepsilon\cdot\nabla c_\varepsilon\xi
 \nonumber\\
  &\quad+\int_{\Omega} (n_\varepsilon+1)^{-1} n_\varepsilon
  \nabla c_\varepsilon\cdot\nabla\xi
 +\int_{\Omega} (n_\varepsilon+1)^{-1} f(n_\varepsilon)\xi
 -\varepsilon\int_{\Omega} (n_\varepsilon+1)^{-1} n_\varepsilon^2\xi
 \Big|\nonumber\\
&\le \left\|\nabla\ln\left(n_{\varepsilon}+1\right)(\cdot, t)\right\|^2_{L^{2}(\Omega)}\|\xi\|_{L^{\infty}(\Omega)}\nonumber
+\left\|\nabla\ln\left(n_{\varepsilon}+1\right)(\cdot, t)\right\|_{L^{2}(\Omega)}
\|\nabla\xi\|_{L^{2}(\Omega)}\\
&\quad+
\left\|\ln\left(n_{\varepsilon}+1\right)(\cdot, t)\right\|_{L^{4}(\Omega)}\left\|\mathbf{u}_\varepsilon(\cdot, t)\right\|_{L^{4}(\Omega)}\|\nabla \xi\|_{L^{2}(\Omega)}\\
&\quad +\left\|\nabla\ln\left(n_{\varepsilon}+1\right)(\cdot, t)\right\|_{L^{2}(\Omega)}
\left\|\nabla c_\varepsilon(\cdot, t)\right\|_{L^{2}(\Omega)}\|\xi\|_{L^{\infty}(\Omega)}\nonumber\\
&\quad+\left\|\nabla c_\varepsilon(\cdot, t)\right\|_{L^{2}(\Omega)}\|\nabla\xi\|_{L^{2}(\Omega)}\nonumber
+\left\|f(n_\varepsilon(\cdot,t))\right\|_{L^{1}(\Omega)}
\|\xi\|_{L^{\infty}(\Omega)}
+\left\|n_\varepsilon(\cdot, t)\right\|_{L^{1}(\Omega)}\|\xi\|_{L^{\infty}(\Omega)}
\end{align}
for all $t\in(0,T)$ and $\varepsilon\in(0,1)$, where the second inequality relies on $\frac{n_{\eps }}{1+n_{\eps }}\le1$. This in conjunction with Young's inequality,
Poincar\'{e}'s inequality and the Sobolev embedding inequality ensures the existence of
$c_3=c_3(\Omega)>0$ such that
\begin{align*}
\left|\int_{\Omega}\partial_{t}\ln\left(n_{\varepsilon}(\cdot, t)+1\right)\xi\right|
 \leq c_3
 \Big(1+\left\|\mathbf{u}_\varepsilon(\cdot, t)\right\|^2_{L^{4}(\Omega)}
& +\left\|\nabla\ln\left(n_{\varepsilon}+1\right)(\cdot, t)\right\|^2_{L^{2}(\Omega)}
+\left\|\nabla c_\varepsilon(\cdot, t)\right\|^2_{L^{2}(\Omega)}\nonumber\\
&\quad
+\left\|f(n_\varepsilon(\cdot, t))\right\|_{L^{1}(\Omega)}
+\left\|n_\varepsilon(\cdot, t)\right\|^2_{L^{1}(\Omega)}\Big)\|\xi\|_{ W^{1,3}(\Omega) }
\end{align*}
for all $t\in(0,T)$ and $\varepsilon\in(0,1)$, which directly gives that
\begin{align}\label{y4}
&\hspace*{-1cm}\left\|\partial_{t}\ln\left(n_{\varepsilon}(\cdot, t)+1\right)\right\|_{\left(W^{1, 3}(\Omega)\right)^{*}}\nonumber\\
 &  \leq c_3
 \Big(1+\left\|\mathbf{u}_\varepsilon(\cdot, t)\right\|^2_{L^{4}(\Omega)}
+\left\|\nabla\ln\left(n_{\varepsilon}+1\right)(\cdot, t)\right\|^2_{L^{2}(\Omega)}
+\left\|\nabla c_\varepsilon(\cdot, t)\right\|^2_{L^{2}(\Omega)}\nonumber\\
&\quad
+\left\|f(n_\varepsilon(\cdot, t))\right\|_{L^{1}(\Omega)}
+\left\|n_\varepsilon(\cdot, t)\right\|_{L^{1}(\Omega)}\Big)
\quad\text{for~all~}t\in(0,T)~\text{and}~\varepsilon\in(0,1).
\end{align}
By integrating \eqref{y4} in time,
we can derive the desired estimate from \eqref{y0} and Lemma \ref{lem3.1} together with Lemmata \ref{lem3.3} and \ref{lem3.7}.
\end{proof}
\begin{lemma}\label{lem3.11}
Let $T>0$ and assume $γ\in(1,2)$. Then there is $C=C(T)>0$ such that
\begin{align}\label{s0}
\int_{0}^{T} \int_{\Omega}\left|\nabla\left(n_{\varepsilon}+1\right)^{-1}\right|^{2} \leq C \quad \text { for all } \varepsilon \in(0,1).
\end{align}
\end{lemma}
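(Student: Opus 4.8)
The plan is to deduce \eqref{s0} from the gradient bound for $\ln(\nep+1)$ already established in Lemma~\ref{lem3.10}, by means of a purely pointwise comparison between the two nonlinear transformations of $\nep$. Since $(n_\varepsilon,c_\varepsilon,\mathbf u_\varepsilon)$ is a classical solution by Lemma~\ref{lem2.1}, the chain rule applies in $\Omega\times(0,T)$ and gives
\[
\nabla(\nep+1)^{-1}=-(\nep+1)^{-2}\nabla\nep,\qquad \nabla\ln(\nep+1)=(\nep+1)^{-1}\nabla\nep.
\]
First I would use these identities together with $\nep\ge0$ (so that $(\nep+1)^{-1}\le 1$) to obtain the pointwise estimate
\[
\bigl|\nabla(\nep+1)^{-1}\bigr|=(\nep+1)^{-2}|\nabla\nep|\le (\nep+1)^{-1}|\nabla\nep|=\bigl|\nabla\ln(\nep+1)\bigr|\qquad\text{in }\Omega\times(0,T).
\]

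Squaring this inequality and integrating over $\Omega\times(0,T)$ then yields
\[
\int_0^T\int_\Omega\bigl|\nabla(\nep+1)^{-1}\bigr|^2\le \int_0^T\int_\Omega\bigl|\nabla\ln(\nep+1)\bigr|^2,
\]
and the right-hand side is bounded by a constant $C=C(T)$ uniformly in $\varepsilon\in(0,1)$ thanks to \eqref{y0} in Lemma~\ref{lem3.10}, which is valid precisely because $\gamma\in(1,2)$ is assumed here as well. This establishes \eqref{s0}.

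I do not expect any genuine obstacle: the argument is an elementary algebraic domination followed by a citation of the already-proven estimate \eqref{y0}. The only minor point to keep in mind is that the dependence on the hypothesis $\gamma\in(1,2)$ enters only indirectly, through the invocation of Lemma~\ref{lem3.10} (which in turn rests on the bounds of Lemma~\ref{lem3.5} and hence Corollary~\ref{cor1}); no further smallness or structural assumption on $f$ beyond \eqref{f} is needed. An alternative route would be to test the $\nep$-equation directly with a suitable power of $(\nep+1)^{-1}$, but the pointwise comparison above is cleaner and reuses existing estimates without any new computation.
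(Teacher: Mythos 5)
Your proof is correct, and it takes a genuinely different and considerably more economical route than the paper's. You observe the pointwise domination
\[
\bigl|\nabla(\nep+1)^{-1}\bigr|=(\nep+1)^{-2}|\nabla\nep|\le (\nep+1)^{-1}|\nabla\nep|=\bigl|\nabla\ln(\nep+1)\bigr|
\]
valid because $\nep\ge0$ forces $(\nep+1)^{-1}\le1$, and then simply cite \eqref{y0} from Lemma~\ref{lem3.10}; this makes the lemma an immediate corollary of the preceding one. The paper instead re-runs the whole testing machinery: it invokes Lemma~\ref{lem:testing} with $F(s)=1/s$ and $\varphi\equiv 1$, integrates the resulting identity in time, controls the cross term $2\io (\nep+1)^{-3}\nep\nabla\nep\cdot\nabla\cep$ by Young's inequality, and reabsorbs the remaining terms using the bounds \eqref{u0'} and \eqref{c1}; the end product is a bound on $\int_0^T\io(\nep+1)^{-2}|\nabla\nep|^2$, which is precisely $\int_0^T\io|\nabla\ln(\nep+1)|^2$ and thus must still be combined (implicitly) with the same pointwise inequality you use to deduce \eqref{s0}. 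So your argument not only avoids a redundant energy computation but also makes explicit the final comparison that the paper's exposition leaves unstated. The only thing worth flagging is that, as you rightly note, the hypothesis $\gamma\in(1,2)$ enters only through the dependency chain behind \eqref{y0}; your proof adds nothing new on that front, which is exactly what one wants.
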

\begin{proof}
According to Lemma~\ref{lem:testing} applied to $F(s)=\f1s$,
\begin{align}\label{s5}
\int_{\Omega}\partial_{t}\left(n_{\varepsilon}(\cdot, t)+1\right)^{-1}\varphi
 =& -2\int_{\Omega} (n_\varepsilon+1)^{-2}|\nabla n_\varepsilon|^2\varphi
 +\int_{\Omega} (n_\varepsilon+1)^{-2}\nabla n_\varepsilon\cdot\nabla\varphi\nonumber\\
&\quad+ \int_{\Omega}(n_\varepsilon+1)^{-1} \mathbf{u}_\varepsilon\cdot\nabla \varphi +2\int_{\Omega} (n_\varepsilon+1)^{-3} n_\varepsilon
 \nabla n_\varepsilon\cdot\nabla c_\varepsilon\varphi \nonumber\\
  &\quad-\int_{\Omega} (n_\varepsilon+1)^{-2} n_\varepsilon
  \nabla c_\varepsilon\cdot\nabla\varphi
 -\int_{\Omega} (n_\varepsilon+1)^{-2} f(n_\varepsilon)\varphi\nonumber\\
  &\quad+\varepsilon\int_{\Omega}\left(n_{\varepsilon}+1\right)^{-2}
  n^2_{\varepsilon}\varphi
  \quad\text{ in }(0,T)~\text{and}~\varepsilon\in(0,1).
\end{align}
We first take $\varphi\equiv1$,
integrate \eqref{s5} with respect to the time-variable and use Young's inequality to obtain that
\begin{align*}
2\int_0^T\int_{\Omega}(n_\varepsilon+1)^{-2} |\nabla n_\varepsilon|^2
&\le\int_{\Omega}\left(n_{0\varepsilon}(\cdot)+1\right)^{-1}
 +2\int_0^T\int_{\Omega} (n_\varepsilon+1)^{-3} n_\varepsilon
 \nabla n_\varepsilon\cdot\nabla c_\varepsilon \nonumber\\
  &\quad
 -\int_0^T\int_{\Omega} (n_\varepsilon+1)^{-2} f(n_\varepsilon)
 +\varepsilon\int_0^T\int_{\Omega}\left(n_{\varepsilon}+1\right)^{-2}
  n^2_{\varepsilon}\nonumber\\
  &\le\int_{\Omega}\left(n_{0\varepsilon}(\cdot)+1\right)^{-1}
 +\int_0^T\int_{\Omega} (n_\varepsilon+1)^{-2} |\nabla n_\varepsilon|^2
 +\int_0^T\int_{\Omega} |\nabla c_\varepsilon|^2 \nonumber\\
  &\quad
 -\int_0^T\int_{\Omega} (n_\varepsilon+1)^{-2} f(n_\varepsilon)
 +\varepsilon\int_0^T\int_{\Omega}\left(n_{\varepsilon}+1\right)^{-2}
  n^2_{\varepsilon}
  ~~\text{for~all~} \varepsilon\in(0,1).
\end{align*}
This in conjunction with \eqref{u0'} and \eqref{c1} shows that
\begin{align}\label{s5'}
\int_0^T\int_{\Omega}(n_\varepsilon+1)^{-2} |\nabla n_\varepsilon|^2
  &\le\int_{\Omega}\left(n_{0\varepsilon}(\cdot)+1\right)^{-1}
 +\int_0^T\int_{\Omega} |\nabla c_\varepsilon|^2
  +\int_0^T\int_{\Omega} |f(n_\varepsilon)|\nonumber\\
  &\quad
+\varepsilon\int_0^T\int_{\Omega}\left(n_{\varepsilon}+1\right)^{-2}
  n^2_{\varepsilon}
  \nonumber\\
  &\le c_1
~~\text{for~all~} \varepsilon\in(0,1)
\end{align}
with some $c_1=c_1(T)>0$.
\end{proof}

\section{Global existence of generalized solution. Proof of Theorem~\ref{th 1}}
\label{sec:convergence}
This section is devoted to proving the global existence of a generalized solution to (\ref{q1}).
To achieve this goal, we shall provide necessary
convergence properties for all components in the two succeeding lemmata.
 First, the next result concerned with $\{c_\varepsilon\}_{\varepsilon\in(0,1)}$ and $\{\mathbf{u}_\varepsilon\}_{\varepsilon\in(0,1)}$ directly follows from lemmata of the previous section:

\begin{lemma}\label{lem4.1}
If $\gamma\in(1,2)$, there exist $\left\{\varepsilon_{j}\right\}_{j \in \mathbb{N}} \subset(0,1)$ as well as functions
\begin{equation}
\left\{\begin{array}{l}{\mathbf{u} \in L_{loc}^{2}\left([0, \infty) ; W^{1,2}(\Omega;\mathbb{R}^2)\right)\quad} \\
{c\in L_{l o c}^{2}\left([0, \infty) ; W^{1,2}(\Omega)\right)}\end{array}\right.
\end{equation}
with $c\ge 0$ a.e. in $\Omega\times(0,\infty)$
such that $\varepsilon_{j} \searrow 0$ as $j \rightarrow \infty$, and
\begin{align}
 c_{\varepsilon} &\rightarrow c &&\text { in } L_{l o c}^{2}\big(\overline{\Omega}\times[0, \infty)\big) \text { and a.e. in } \Omega \times(0, \infty),\label{ae1}\\
\nabla c_{\varepsilon}& \rightarrow \nabla c && \text { in } L^{2}_{l o c}\big(\overline{\Omega}\times[0, \infty)\big) \text { and a.e. in } \Omega \times(0, \infty),\label{ae2}\\
 \mathbf{u}_{\varepsilon} &\rightarrow \mathbf{u} &&\text { in } L_{l o c}^{2}\big(\overline{\Omega}\times[0, \infty)\big) \text { and a.e. in } \Omega \times(0, \infty),\label{ae3}\\
\nabla \mathbf{u}_{\varepsilon}& \rightharpoonup \nabla \mathbf{u} && \text { in } L^{2}_{l o c}\big(\overline{\Omega}\times[0, \infty)\big) \text { and a.e. in } \Omega \times(0, \infty)\label{ae4}
\end{align}
as $\varepsilon=\varepsilon_{j} \searrow 0$.
\end{lemma}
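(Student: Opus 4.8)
This lemma is essentially a bookkeeping step collecting the compactness information already assembled in Section~\ref{sec:estimates}; no genuinely new estimate is required. The plan is to first extract, along a single diagonal sequence $\eps_j\searrow0$, all of the $L^2_{loc}$-convergences appearing in \eqref{ae1}--\eqref{ae4}, and only afterwards to pass to a further subsequence along which the strong limits are also attained pointwise a.e.

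I would fix the exhaustion $T_k:=k$, $k\in\mathbb N$, and argue inductively over $k$. On $(0,T_1)$, Lemma~\ref{lem3.8} yields a subsequence along which $\cep$ converges strongly in $L^2((0,T_1);W^{1,2}(\Omega))$; this immediately gives the $L^2$-parts of \eqref{ae1} and \eqref{ae2} on $(0,T_1)$, since $W^{1,2}$-convergence controls both $\cep$ and $\nabla\cep$. Passing to a further subsequence, Lemma~\ref{lemL2} provides $\uep\to\mathbf{u}$ strongly in $L^2((0,T_1);L^2(\Omega;\mathbb R^2))$, i.e.\ the $L^2$-part of \eqref{ae3}. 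Since \eqref{b0'} bounds $\{\nabla\uep\}$ in $L^2((0,T_1)\times\Omega)$, a further subsequence satisfies $\nabla\uep\rightharpoonup v$ weakly in $L^2((0,T_1)\times\Omega)$ for some $v$; integrating by parts and using the strong convergence $\uep\to\mathbf{u}$ shows that $v$ coincides with the distributional gradient $\nabla\mathbf{u}$, which is \eqref{ae4} on $(0,T_1)$. Repeating this extraction successively on $(0,T_2),(0,T_3),\dots$, each time as a subsequence of the preceding one, and then taking the diagonal sequence produces a single $\eps_j\searrow0$ along which \eqref{ae1}--\eqref{ae4} hold in the indicated $L^2_{loc}$ senses on every $(0,T_k)$, hence on every bounded time interval.

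It then remains to upgrade the strong $L^2$-convergences to a.e.\ convergence: strong convergence of $\cep$, $\nabla\cep$ and $\uep$ in $L^2((0,T_k)\times\Omega)$ forces a.e.\ convergence of these quantities along one more subsequence, and a final diagonalization over $k$ yields the subsequence claimed in the statement. Nonnegativity of $c$ then follows from $\cep\ge0$ together with the a.e.\ convergence in \eqref{ae1}, while the regularity $c\in L^2_{loc}([0,\infty);W^{1,2}(\Omega))$ and $\mathbf{u}\in L^2_{loc}([0,\infty);W^{1,2}(\Omega;\mathbb R^2))$ is read off from the uniform bounds \eqref{u0}, \eqref{u0'}, \eqref{b0} and \eqref{b0'}, using strong convergence for $c$, $\nabla c$ and $\mathbf{u}$ and weak lower semicontinuity of the $L^2$-norm for $\nabla\mathbf{u}$. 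I anticipate no substantial obstacle here; the only point that is not entirely automatic is the identification of the weak $L^2$-limit of $\nabla\uep$ with $\nabla\mathbf{u}$, and this rests precisely on the strong, hence distributional, convergence of $\uep$ provided by Lemma~\ref{lemL2}.
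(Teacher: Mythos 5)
Your proposal is correct and follows essentially the same route as the paper: strong $L^2((0,T);W^{1,2})$-compactness of $\{\cep\}$ from Lemma~\ref{lem3.8}, strong $L^2((0,T);L^2)$-compactness of $\{\uep\}$ from Lemma~\ref{lemL2}, and weak $L^2$-compactness of $\{\nabla\uep\}$ from \eqref{b0'}, followed by extraction of a.e.-convergent further subsequences and a diagonal argument over an exhaustion by finite time intervals. The only noteworthy (and cosmetic) divergence is that the paper's one-sentence proof attributes \eqref{ae3}--\eqref{ae4} merely to ``the bounds from Lemma~\ref{lem3.3},'' while you correctly appeal to Lemma~\ref{lemL2} for the \emph{strong} $L^2$-convergence of $\uep$ --- your citation is the more precise one. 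Like the paper, you do not (and cannot) justify the ``a.e.'' clause attached to the weak convergence in \eqref{ae4}; this is almost certainly a copy-over from the preceding line, and it is never used subsequently.
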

\begin{proof}
 The relative compactness of $\{c_\varepsilon\}_{\varepsilon\in(0,1)}$ in $L^2((0,T);W^{1,2}(\Omega))$ assured in Lemma~\ref{lem3.8} for every $T>0$ guarantees the existence of a sequence along which $\cep$ and $\nabla\cep$ converge in $L^2_{loc}(\Ombar\times[0,\infty))$ (implying a.e. convergence along a further subsequence), and the bounds from Lemma \ref{lem3.3} can be used to conclude \eqref{ae3} and \eqref{ae4} along a suitable sequence.
\end{proof}

Regarding convergence of the first component, we note the following:

\begin{lemma}\label{lem4.2}
There exist $\left\{\varepsilon_{j}\right\}_{j \in \mathbb{N}} \subset(0,1)$ and a function
\begin{align}
n\in L_{l o c}^{1}\big(\overline{\Omega}\times[0, \infty)\big)
\end{align}
such that $\{\varepsilon_{j}\}_{j\inℕ}$ is a subsequence of the sequence found in Lemma~\ref{lem4.1}, and 
\begin{align}
n_{\varepsilon}& \rightarrow n \text { in } L^{1}_{l o c}\big(\overline{\Omega}\times[0, \infty)\big) \text { and a.e. in } \Omega \times(0, \infty)\label{L1}.
\end{align}
Moreover, we have
\begin{align}
\ln\left(n_{\varepsilon}+1\right)
&\rightarrow
\ln\left(n+1\right)&& \text { in } L^{2}_{l o c}\big(\overline{\Omega}\times[0, \infty)\big),
\label{L2}\\
\nabla\ln\left(n_{\varepsilon}+1\right)
&\rightharpoonup
\nabla\ln\left(n+1\right)&& \text { in } L^{2}_{l o c}\big(\overline{\Omega}\times[0, \infty)\big),
\label{L3}\\
\nabla\left(n_{\varepsilon}+1\right)^{-1}
&\rightharpoonup
\nabla\left(n+1\right)^{-1}&& \text { in } L^{2}_{l o c}\big(\overline{\Omega}\times[0, \infty)\big),
\label{L3'}\\
(n_\varepsilon+1)^{-1}f(n_\varepsilon)
&\rightarrow
(n+1)^{-1}f(n)&& \text { in } L^{1}_{l o c}\big(\overline{\Omega}\times[0, \infty)\big),\label{L4}
\\
(n_\varepsilon+1)^{-1}n^2_\varepsilon
&\rightarrow
(n+1)^{-1}n^2&& \text { in } L^{1}_{l o c}\big(\overline{\Omega}\times[0, \infty)\big)\label{L4'}
\end{align}
as $\varepsilon=\varepsilon_{j} \searrow 0$.
\end{lemma}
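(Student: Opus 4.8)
The strategy is to combine the derivative estimates of Lemma~\ref{lem3.10} with an Aubin--Lions argument to extract a subsequence along which $\ln(\nep+1)$ converges strongly, and then to upgrade this to the various convergences claimed for $\nep$ itself. First I would apply the Aubin--Lions lemma to the family $\{\ln(\nep+1)\}_{\eps\in(0,1)}$: by \eqref{y0} it is bounded in $L^2_{loc}([0,\infty);W^{1,2}(\Omega))$, and by \eqref{y0'} the time derivatives are bounded in $L^1_{loc}([0,\infty);(W^{1,3}(\Omega))^*)$; since $W^{1,2}(\Omega)\hookrightarrow\hookrightarrow L^2(\Omega)\hookrightarrow (W^{1,3}(\Omega))^*$, a diagonal extraction over an exhausting sequence of time intervals yields $\{\eps_j\}\subset(0,1)$, which we may take as a subsequence of the one from Lemma~\ref{lem4.1}, along which $\ln(\nep+1)\to v$ in $L^2_{loc}(\Ombar\times[0,\infty))$ and a.e.\ in $\Omega\times(0,\infty)$ for some limit $v$. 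Setting $n:=\e^{v}-1$, the a.e.\ convergence $\ln(\nep+1)\to\ln(n+1)$ forces $\nep\to n$ a.e.\ in $\Omega\times(0,\infty)$ by continuity of $s\mapsto\e^s-1$; this identifies the limit and gives \eqref{L2} with $v=\ln(n+1)$.

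Next I would promote the a.e.\ convergence of $\nep$ to $L^1_{loc}$ convergence: the uniform integrability of $\{\nep\}_{\eps\in(0,1)}$ over $\Omega\times(0,T)$ from \eqref{k0} together with the a.e.\ convergence just obtained allows Vitali's convergence theorem to be invoked on each bounded cylinder, yielding \eqref{L1} (in particular $n\in L^1_{loc}(\Ombar\times[0,\infty))$, and also $\nep\to n$ in measure, which we may recycle below). The weak convergences \eqref{L3} and \eqref{L3'} then follow by a standard argument: by \eqref{y0}, respectively \eqref{s0}, the families $\{\nabla\ln(\nep+1)\}$ and $\{\nabla(\nep+1)^{-1}\}$ are bounded in $L^2_{loc}(\Ombar\times[0,\infty))$, hence (along a further subsequence, relabelled) converge weakly there; since $\ln(\nep+1)\to\ln(n+1)$ and $(\nep+1)^{-1}\to(n+1)^{-1}$ in $L^2_{loc}$ (the former by \eqref{L2}, the latter because $(\nep+1)^{-1}$ is bounded and converges a.e.), the weak limits must be the distributional gradients $\nabla\ln(n+1)$ and $\nabla(n+1)^{-1}$, and uniqueness of the limit removes the need to pass to yet another subsequence.

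Finally, for the two strong $L^1_{loc}$ statements \eqref{L4} and \eqref{L4'} I would again use Vitali. The integrands $(\nep+1)^{-1}f(\nep)$ and $(\nep+1)^{-1}\nep^2$ converge a.e.\ to $(n+1)^{-1}f(n)$ and $(n+1)^{-1}n^2$ respectively, by the a.e.\ convergence $\nep\to n$ and continuity of $f$; and their uniform integrability over every $\Omega\times(0,T)$ is exactly what Lemma~\ref{lem3.9}, equation~\eqref{k0'}, provides. Vitali's theorem on each bounded cylinder then yields the asserted $L^1_{loc}$ convergence. The main obstacle, and the only genuinely delicate point, is the identification of the weak limits in \eqref{L3}--\eqref{L3'}: one must be careful that the pointwise-a.e.\ limit $n$ obtained from $\ln(\nep+1)$ is the same object for which the gradient estimates \eqref{y0} and \eqref{s0} were derived, and that testing against $C_0^\infty$ functions legitimately passes the gradient onto the (strongly convergent) lower-order factor; once the subsequences are consistently chosen as successive refinements, this is routine, but the bookkeeping of subsequences across Lemmata~\ref{lem4.1} and~\ref{lem4.2} is where care is required.
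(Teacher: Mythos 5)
Your proposal is correct and follows essentially the same route as the paper: Aubin--Lions applied to $\ln(\nep+1)$ using Lemma~\ref{lem3.10}, the pointwise identification $n=\e^v-1$, Vitali plus Lemma~\ref{lem3.9} for \eqref{L1}, \eqref{L4}, \eqref{L4'}, and the $L^2$-bounds \eqref{y0}, \eqref{s0} for the weak gradient convergences. The paper is slightly more compressed in identifying $\nabla(n+1)^{-1}$ as the weak limit in \eqref{L3'}, where you spell out the distributional argument, but the substance is identical.
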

\begin{proof}
It follows by Lemma \ref{lem3.10} that with some $c_1=c_1(T)>0$,
\begin{align}\label{i1'}
\int_{0}^{T} \int_{\Omega}\left|\nabla\ln\left(n_{\varepsilon}+1\right)\right|^{2}
+\int_{0}^{T}
\left\|\partial_{t}\ln\left(n_{\varepsilon}+1\right)\right\|_{\left(W^{1, 3}(\Omega)\right)^{*}} \le c_1
\end{align}
holds for every $\varepsilon\in(0,1)$.
The Aubin-Lions lemma enables us to find a function $g\in L^2_{loc}\big([0,\infty); W^{1,2}(\Omega)\big)$ and a subsequence of $\{\eps _j\}_{j\inℕ}$ from Lemma~\ref{lem4.1} (which we do not relabel) satisfying
\begin{align}\label{i2}
\ln\left(n_{\varepsilon}+1\right)
&\rightarrow
g, \quad \nabla\ln\left(n_{\varepsilon}+1\right)
\rightharpoonup
\nabla g~~\text { in } L^{2}_{l o c}\big(\overline{\Omega}\times[0, \infty)\big),
\end{align}
as $\eps =\eps _j\to 0$ and, along a subsequence, $\ln(n_{\eps }+1)\to g$ a.e. in $\Omega\times(0,\infty)$. Letting $n=\e^{g}-1$, we clearly have that $n_{\eps }\to n$ a.e. in $\Omega\times(0,\infty)$. According to the Vitali convergence theorem,
we can apply Lemma \ref{lem3.9} to derive that furthermore
\begin{align}\label{i1}
n_{\varepsilon}& \rightarrow n \text { in } L^{1}_{l o c}\big(\overline{\Omega}\times[0, \infty)\big).
\end{align}
With the estimate \eqref{s0} at our disposal, we conclude that \eqref{L3'} holds along a subsequence.
Moreover, the almost everywhere convergence along a further subsequence, as entailed by \eqref{i1} combined with the continuity of $f$ ensures that
\begin{align}\label{i3}
(n_\varepsilon+1)^{-1}f(n_\varepsilon)
&\rightarrow
(n+1)^{-1}f(n_\varepsilon)~~\text { a.e. in } \Omega \times(0, \infty).
\end{align}
In view of \eqref{k0'} and \eqref{i3}, the Vitali convergence theorem
guarantees the assertion \eqref{L4}.
Based on the uniform integrability of $(n_\varepsilon+1)^{-1}n^2_\varepsilon$,
we can obtain \eqref{L4'} by the same reasoning.
\end{proof}

Consequences for certain ``mixed terms'' that appear in the definition of solutions are as follows:

\begin{lemma}\label{lem4.3}Assume that $\gamma\in(1,2)$.
Let $n,c,\mathbf{u}$ be given in Lemma \ref{lem4.1} and Lemma \ref{lem4.2}.
There exist $\left\{\varepsilon_{j}\right\}_{j \in \mathbb{N}} \subset(0,1)$ such that
\begin{align}
(n_\varepsilon+1)^{-2} n_\varepsilon\nabla n_\varepsilon\cdot\nabla c_\varepsilon
&\rightharpoonup
(n+1)^{-2} n\nabla n\cdot\nabla c&&
\text { in }
  L^{1}_{l o c}\big(\overline{\Omega}\times[0, \infty)\big),
\label{L5}\\
(n_\varepsilon+1)^{-1} n_\varepsilon\nabla c_\varepsilon
&\rightarrow(n+1)^{-1} n\nabla c
  && \text { in } L^{1}_{l o c}\big(\overline{\Omega}\times[0, \infty)\big),
\label{L6}\\
\ln\left(n_{\varepsilon}+1\right) \mathbf{u}_\varepsilon
&\rightarrow
\ln\left(n+1\right)\mathbf{u}
&& \text { in } L^{1}_{l o c}\big(\overline{\Omega}\times[0, \infty)\big),\label{L7}
 \\
c_\varepsilon\mathbf{u}_\varepsilon
&\rightarrow
 c\mathbf{u}
&& \text { in } L^{1}_{l o c}\big(\overline{\Omega}\times[0, \infty)\big),\label{L8}
 \\
 \mathbf{u}_\varepsilon\otimes\mathbf{u}_\varepsilon
&\rightarrow
 \mathbf{u}\otimes\mathbf{u}
&&
\text { in } L^{1}_{l o c}\big(\overline{\Omega}\times[0, \infty)\big)\label{L9}
\end{align}
as $\varepsilon=\varepsilon_{j} \searrow 0$.
\end{lemma}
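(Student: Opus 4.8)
The plan is to deduce all five convergences from the strong $L^2_{loc}$-statements in Lemma~\ref{lem4.1} and Lemma~\ref{lem4.2}, using two elementary observations. First, since $s\mapsto\f{s}{s+1}$ is bounded on $[0,\infty)$ and $n_\varepsilon\to n$ a.e. in $\Omega\times(0,\infty)$ by \eqref{L1}, dominated convergence gives
\[
\f{n_\varepsilon}{n_\varepsilon+1}\to\f{n}{n+1}\quad\text{in }L^p_{loc}\big(\Ombar\times[0,\infty)\big)\text{ for every }p\in[1,\infty),
\]
this sequence being bounded by $1$. Second, on any cylinder $\Omega\times(0,T)$ of finite measure, if $a_\varepsilon\to a$ in $L^p$ and $b_\varepsilon\to b$ in $L^q$ with $\f1p+\f1q\le1$, then H\"older's inequality yields $a_\varepsilon b_\varepsilon\to ab$ in $L^1(\Omega\times(0,T))$. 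Throughout, $\varepsilon=\varepsilon_j$ runs along the subsequence produced in Lemma~\ref{lem4.2}; no further extraction will be needed.

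With these at hand, \eqref{L7}--\eqref{L9} are immediate: \eqref{L7} follows from $\ln(n_\varepsilon+1)\to\ln(n+1)$ in $L^2_{loc}$ (see \eqref{L2}) together with $\mathbf{u}_\varepsilon\to\mathbf{u}$ in $L^2_{loc}$ (see \eqref{ae3}); \eqref{L8} from $c_\varepsilon\to c$ in $L^2_{loc}$ (see \eqref{ae1}) and \eqref{ae3}; and \eqref{L9} from \eqref{ae3} applied componentwise. For \eqref{L6}, writing $(n_\varepsilon+1)^{-1}n_\varepsilon\nabla c_\varepsilon=\f{n_\varepsilon}{n_\varepsilon+1}\nabla c_\varepsilon$, we estimate on $\Omega\times(0,T)$
\[
\Big\|\tfrac{n_\varepsilon}{n_\varepsilon+1}\nabla c_\varepsilon-\tfrac{n}{n+1}\nabla c\Big\|_{L^1}\le\|\nabla c_\varepsilon-\nabla c\|_{L^1}+\Big\|\big(\tfrac{n_\varepsilon}{n_\varepsilon+1}-\tfrac{n}{n+1}\big)\nabla c\Big\|_{L^1},
\]
where the first term vanishes in the limit by \eqref{ae2} and the second by dominated convergence (its integrand tends to $0$ a.e. and is dominated by $2|\nabla c|\in L^1_{loc}$).

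The only assertion requiring care is \eqref{L5}, the point being that one may not simply multiply the two weakly convergent sequences $\nabla\ln(n_\varepsilon+1)$ and $\nabla c_\varepsilon$ — here the latter is in fact strongly convergent, and this is what we exploit. Using the identity $(n_\varepsilon+1)^{-2}n_\varepsilon\nabla n_\varepsilon=\f{n_\varepsilon}{n_\varepsilon+1}\nabla\ln(n_\varepsilon+1)$, valid since $n_\varepsilon$ is smooth, we rewrite
\[
(n_\varepsilon+1)^{-2}n_\varepsilon\nabla n_\varepsilon\cdot\nabla c_\varepsilon=\nabla\ln(n_\varepsilon+1)\cdot w_\varepsilon,\qquad w_\varepsilon:=\tfrac{n_\varepsilon}{n_\varepsilon+1}\nabla c_\varepsilon.
\]
Exactly as for \eqref{L6}, but now in $L^2$ — splitting off $\f{n_\varepsilon}{n_\varepsilon+1}(\nabla c_\varepsilon-\nabla c)$, controlled by $\|\nabla c_\varepsilon-\nabla c\|_{L^2}\to0$ via \eqref{ae2}, and $(\f{n_\varepsilon}{n_\varepsilon+1}-\f{n}{n+1})\nabla c$, which vanishes by dominated convergence with majorant $4|\nabla c|^2$ — one gets $w_\varepsilon\to w:=\f{n}{n+1}\nabla c$ strongly in $L^2_{loc}(\Ombar\times[0,\infty))$, while \eqref{L3} provides $\nabla\ln(n_\varepsilon+1)\rightharpoonup\nabla\ln(n+1)$ weakly in $L^2_{loc}$. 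Hence, for each $\varphi\in C_0^\infty(\Ombar\times[0,\infty))$, choosing $T$ with $\operatorname{supp}\varphi\subset\Ombar\times[0,T)$ we have $\varphi w_\varepsilon\to\varphi w$ in $L^2(\Omega\times(0,T))$, so that
\[
\int_0^T\!\!\int_\Omega(n_\varepsilon+1)^{-2}n_\varepsilon\nabla n_\varepsilon\cdot\nabla c_\varepsilon\,\varphi=\int_0^T\!\!\int_\Omega\nabla\ln(n_\varepsilon+1)\cdot(\varphi w_\varepsilon)\longrightarrow\int_0^T\!\!\int_\Omega\nabla\ln(n+1)\cdot(\varphi w).
\]
Since $\nabla\ln(n+1)\cdot w=(n+1)^{-2}n\nabla n\cdot\nabla c$ under the convention $(n+1)^{-1}\nabla n:=\nabla\ln(n+1)$ used in Definition~\ref{def3}, this is the weak $L^1_{loc}$-convergence claimed in \eqref{L5}; uniform integrability of $\{\nabla\ln(n_\varepsilon+1)\cdot w_\varepsilon\}$, which upgrades testing against $C_0^\infty$ to testing against $L^\infty$, follows from the bound \eqref{y0} on $\nabla\ln(n_\varepsilon+1)$ in $L^2_{loc}$ and the uniform integrability of $\{|w_\varepsilon|^2\}$ entailed by its $L^2_{loc}$-convergence. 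The main — and rather mild — obstacle is thus just the bookkeeping behind \eqref{L5}; the remaining four limits reduce to H\"older's inequality and dominated convergence applied to convergences already in hand.
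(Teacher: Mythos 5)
Your proof is correct, and for \eqref{L6}--\eqref{L9} it coincides with the paper's argument (product of two strongly $L^2_{loc}$-convergent sequences, resp. of a bounded a.e.-convergent factor with a strong $L^2_{loc}$ sequence). The interesting divergence is in \eqref{L5}. The paper splits the gradient factor additively via
\[
(n_\varepsilon+1)^{-2}n_\varepsilon\nabla n_\varepsilon=\nabla\ln(n_\varepsilon+1)+\nabla(n_\varepsilon+1)^{-1},
\]
and invokes the weak $L^2_{loc}$ convergences \eqref{L3} \emph{and} \eqref{L3'} to obtain weak $L^2_{loc}$ convergence of the whole expression, which it then pairs with the strong $L^2_{loc}$ convergence of $\nabla\cep$. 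You instead factor multiplicatively, $(n_\varepsilon+1)^{-2}n_\varepsilon\nabla n_\varepsilon=\tfrac{n_\varepsilon}{n_\varepsilon+1}\nabla\ln(n_\varepsilon+1)$, absorb the bounded factor into $\nabla\cep$ to form $w_\varepsilon$, and pair the strong $L^2_{loc}$ convergence of $w_\varepsilon$ against the single weak convergence \eqref{L3}. Your route is a genuine (small) simplification: it bypasses \eqref{L3'} entirely, so the a priori bound of Lemma~\ref{lem3.11} on $\nabla(n_\varepsilon+1)^{-1}$ would not be needed for this lemma. The two limit expressions agree a.e.\ since both arise as weak $L^1$ limits of the same sequence (or, directly, by the Sobolev chain rule for $(n+1)^{-1}=\mathrm{e}^{-\ln(n+1)}$). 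One minor remark: your closing discussion about upgrading test functions from $C_0^\infty$ to $L^\infty$ via uniform integrability is superfluous — multiplying the strongly $L^2$-convergent factor $w_\varepsilon$ by any fixed bounded $\varphi$ with compact temporal support already gives strong $L^2$ convergence of $\varphi w_\varepsilon$, so the weak-times-strong pairing works against all of $L^\infty$ at once. That digression is harmless, but unnecessary.
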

\begin{proof}
It can be verified that
\begin{align}
(n_\varepsilon+1)^{-2} n_\varepsilon\nabla n_\varepsilon
=\nabla\ln\left(n_{\varepsilon}+1\right)
+\nabla (n_\varepsilon+1)^{-1},
\end{align}
which in conjunction with \eqref{L3} and \eqref{L3'} gives that
\begin{align}
(n_\varepsilon+1)^{-2} n_\varepsilon\nabla n_\varepsilon
\rightharpoonup
\nabla\ln\left(n+1\right)
+\nabla (n+1)^{-1}
\end{align}
in $L^{2}_{l o c}\big(\overline{\Omega}\times[0, \infty)\big)$
as $\varepsilon=\varepsilon_{j} \searrow 0$.
This with \eqref{ae2} directly guarantees \eqref{L5}.
Since $(n_\varepsilon+1)^{-1} n_\varepsilon\le1$
in $\Omega\times(0,\infty)$ for all $\varepsilon\in(0,1)$ and
$n_{\varepsilon} \rightarrow n $ a.e. in $\Omega \times(0, \infty)$, the
dominated convergence theorem tells us that as $\varepsilon=\varepsilon_{j} \searrow 0$,
\begin{align}\label{o1}
(n_\varepsilon+1)^{-1} n_\varepsilon
\rightarrow(n+1)^{-1} n~~\text { in } L^{2}_{l o c}\big(\overline{\Omega}\times[0, \infty)\big).
\end{align}
Hence, \eqref{L6} follows by a combination of
\eqref{o1} and \eqref{ae2}.
The assertion \eqref{L7} can be obtained by using \eqref{L2} and \eqref{ae3}.
The convergence \eqref{L8} and \eqref{L9}
are immediate results of \eqref{ae1} and \eqref{ae3}.
\end{proof}

\label{pfthm1}
Finally, all the convergence properties shown in the above two lemmata allow us to give the proof of our main result.\\[-10pt]
\begin{proof}[Proof of Theorem \ref{th 1}]
If $\gamma\ge 2$, then $f$ satisfies \eqref{f} also for any $\gamma\in(1,2)$ (possibly with a larger value of $r$); therefore the previous lemmata remain applicable.
Let $\varphi\in C_0^\infty(\overline{\Omega}\times[0,\infty))$ be an arbitrarily fixed nonnegative test function. According to the lower semicontinuity of $L^2$ norms with respect to weak convergence and the weak convergence of $\sqrt{φ} \nabla \ln(n_{\eps }+1)$ (by \eqref{L3}), we have
\begin{align}\label{t1}
\int_{0}^{\infty}\int_{\Omega}
|\nabla \ln\left(n+1\right)|^2\varphi\le
\liminf _{\varepsilon=\varepsilon_{j} \rightarrow 0}
\int_{0}^{\infty} \int_{\Omega} |\nabla \ln\left(n_{\varepsilon}+1\right)|^2 \varphi.
\end{align}
Due to Lemma~\ref{lem3.9} (and boundedness of $φ$),
$\left\{\left(n_{\varepsilon}+1\right)^{-1}n^2_{\varepsilon}\varphi\right\}_{\varepsilon \in(0,1)}$
is uniformly integrable over $\Omega \times(0, T)$.
This combined with the Vitali convergence theorem entails that
$(n_\varepsilon+1)^{-1} n^2_{\varepsilon}\varphi\rightarrow (n+1)^{-1} n^2\varphi$
in $L^{1}_{l o c}\big(\overline{\Omega}\times[0, \infty)\big)$,
and thus
\begin{align}\label{t2}
\lim _{\varepsilon=\varepsilon_{j} \rightarrow 0}
\varepsilon\int_{0}^{\infty} \int_{\Omega} (n_\varepsilon+1)^{-1} n^2_{\varepsilon} \varphi=0.
\end{align}
Hence, by integrating \eqref{a0}
with respect to the time-variable, we utilize \eqref{t1} and \eqref{t2} to get
\begin{align}\label{t3}
&\quad\int_{0}^{\infty}\int_{\Omega}|\nabla \ln\left(n+1\right)|^2\varphi\nonumber\\
&\le\liminf _{\varepsilon=\varepsilon_{j} \rightarrow 0}
\Bigg\{
\int_{0}^{\infty} \int_{\Omega} |\nabla \ln\left(n_\varepsilon+1\right)|^2\varphi -\varepsilon\int_{0}^{\infty} \int_{\Omega} (n_\varepsilon+1)^{-1} n^2_{\varepsilon} \varphi\Bigg\}\nonumber\\
&=\liminf _{\varepsilon=\varepsilon_{j} \rightarrow 0}\Bigg\{
-\int_{0}^{\infty} \int_{\Omega} \ln\left(n_\varepsilon+1\right)\varphi_{t}
-\int_{\Omega} \ln\left(n_{0\varepsilon}+1\right)\varphi(\cdot, 0)\nonumber\\
&\quad-\int_{0}^{\infty}\int_{\Omega}\ln\left(n_\varepsilon+1\right)\mathbf{u}_\varepsilon\cdot\nabla \varphi
 +\int_{0}^{\infty}\int_{\Omega} (n_\varepsilon+1)^{-2} n_\varepsilon
 \nabla n_\varepsilon\cdot\nabla c_\varepsilon\varphi
 \nonumber\\
 &\quad
+\int_{0}^{\infty}\int_{\Omega} (n_\varepsilon+1)^{-1}\nabla n_\varepsilon\cdot\nabla\varphi
 -\int_{0}^{\infty}\int_{\Omega} (n_\varepsilon+1)^{-1} n_\varepsilon
  \nabla c_\varepsilon\cdot\nabla\varphi
  \nonumber\\
  &\quad
 -\int_{0}^{\infty}\int_{\Omega} (n_\varepsilon+1)^{-1} f(n_\varepsilon)\varphi
 \Bigg\}.
\end{align}
On the right-hand side of \eqref{t3}, we can use the convergence properties previously derived and applying \eqref{L2}, \eqref{initial1}, \eqref{L7}, \eqref{L5}, \eqref{L3}, \eqref{L6} and \eqref{L4}, we obtain that
\begin{align}\label{t4}
\quad\int_{0}^{\infty} \int_{\Omega}\ln\left(n+1\right)\varphi_{t}&+
\int_{\Omega} \ln\left(n_0+1\right)\varphi(0)\nonumber\\
 &\le-\int_{0}^{\infty}\int_{\Omega}\ln\left(n+1\right)\mathbf{u}\cdot\nabla \varphi
 -\int_{0}^{\infty}\int_{\Omega}|\nabla\ln\left(n+1\right)|^2\varphi
 \nonumber\\
 &\quad+\int_{0}^{\infty}\int_{\Omega} (n+1)^{-1}\nabla n\cdot\nabla\varphi
+\int_{0}^{\infty}\int_{\Omega} (n+1)^{-2} n
 \nabla n\cdot\nabla c\varphi
 \nonumber\\
  &\quad-\int_{0}^{\infty}\int_{\Omega} (n+1)^{-1} n
  \nabla c\cdot\nabla\varphi
 -\int_{0}^{\infty}\int_{\Omega} (n+1)^{-1} f(n)\varphi.
\end{align}
The estimate
\begin{align}\label{t5}
\int_{\Omega} n(\cdot, t) \leq \int_{\Omega} n_{0}+\int_{0}^{t} \int_{\Omega} f(n) \quad \text { for }~ a.e.~t>0
\end{align}
results from \eqref{L1} by Fatou's lemma in the same way as detailed in
\cite[p.20]{W2019}.
In view of \eqref{t4} and \eqref{t5}, we can assert the function $n$ satisfies the conditions required in Definition \ref{def3}.
Based on Lemma \ref{lem4.1}, \eqref{L1}, \eqref{L8} and \eqref{L9}, it is easy to verify the functions $c$ and $\mathbf{u}$ satisfy the corresponding equations in the weak sense, as exhibited in Definition \ref{def1} and Definition \ref{def2}.
\end{proof}
\section{Eventual smoothness}

In this section, we focus on investigating the eventual regularity properties of the generalized solution $(n,c,\mathbf{u})$. The proof will be based on the eventual quasi-energy functional
\begin{equation*}
\io \nep\ln\nep+\f12\io|\nabla\cep|^2.
 \end{equation*}
So as to ensure that it is a quasi-energy functional, we will need smallness of the mass $\io \nep$. This is where the smallness of $r$ (or largeness of $\mu$) matters.
As soon as then, finally, (eventual) boundedness of $n$ in $L^p(\Omega)$ is achieved for large $p$, we can rely on standard procedures to prove regularity of the solution components in the corresponding space-time domain.

In order to state a value for $\mu_0$ in Theorem \ref{th 2},
we introduce $C_*$ (only depending on the domain $\Omega$) as the best constant in
the following Gagliardo-Nirenberg inequality:
\begin{align}\label{GN}
\|\varphi\|_{L^4(\Omega)}
\le C_*\big(\|\nabla \varphi\|^\frac{1}{2}_{L^2(\Omega)}\|\varphi\|^\frac{1}{2}_{L^2(\Omega)}
+\|\varphi\|_{L^2(\Omega)}\big) \qquad \forall \varphi\in W^{1,2}(\Omega).
\end{align}

Since most lemmata in Section~\ref{sec:estimates} included the condition $γ\in(1,2)$, which we want to avoid in the following, let us firstly collect some bounds that later proofs will rely on without this assumption.

\begin{lemma}
 Let $T>0$. If $γ\in(1,∞)$, there is $C>0$ such that for every $\eps \in(0,1)$,
 \begin{align}
  \int_0^T\io |∇\uep|^2 &\le C\label{bound:nabla-u-2},\\
 \int_0^T\io |∇\cep|^2  &\leq C.\label{bound:nabla-c-2}
 \end{align}
\end{lemma}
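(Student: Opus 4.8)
The plan is to deduce both estimates from a single observation: for every $\gamma>1$ the family $\{\nep\}_{\eps\in(0,1)}$ is bounded, uniformly in $\eps\in(0,1)$, in $L^2((0,T);L^p(\Omega))$ for some exponent $p=p(\gamma)>1$, after which the testing procedures already carried out in Lemma~\ref{lem3.3} and Lemma~\ref{lem3.5} go through verbatim with $\tfrac{2}{3-\gamma}$ replaced by $p$. To obtain such a $p$ I would apply \eqref{c4} with $q=2$: when $\gamma\ge2$, the choice $p=2$ is admissible because $p\le\gamma$ and $\tfrac1p+\tfrac{\gamma-1}{q}=\tfrac{\gamma}{2}\ge1$, so $\{\nep\}$ is bounded in $L^2((0,T);L^2(\Omega))$; when $\gamma\in(1,2)$, the choice $p=\tfrac{2}{3-\gamma}\in(1,2)$ works -- this is exactly Corollary~\ref{cor1} -- since $(\gamma-1)(\gamma-2)\le0$ gives $p\le\gamma$ and $\tfrac1p+\tfrac{\gamma-1}{2}=1$. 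In both cases $p>1$, hence $p':=\tfrac{p}{p-1}<\infty$, so in the two-dimensional domain $\Omega$ we may use the Sobolev embeddings $W_0^{1,2}(\Omega)\hookrightarrow L^{p'}(\Omega)$ and $W^{1,2}(\Omega)\hookrightarrow L^{p'}(\Omega)$.

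For \eqref{bound:nabla-u-2} I would test $\eqref{qapp}_{3}$ by $\uep$; since $\nabla\cdot\uep=0$ and $\uep=0$ on $\partial\Omega$, the pressure and convection terms vanish and we are left with $\tfrac12\tfrac{d}{dt}\io|\uep|^2+\io|\nabla\uep|^2=\io\nep\,\uep\cdot\nabla\phi$. Estimating the right-hand side by $\|\nabla\phi\|_{L^\infty(\Omega)}\|\nep\|_{L^p(\Omega)}\|\uep\|_{L^{p'}(\Omega)}$, then applying $W_0^{1,2}(\Omega)\hookrightarrow L^{p'}(\Omega)$ together with the Poincaré inequality and Young's inequality, reduces matters to the ordinary differential inequality
\[
\frac{d}{dt}\io|\uep|^2+\io|\nabla\uep|^2\le C\|\nep\|_{L^p(\Omega)}^2\qquad\text{in }(0,T),
\]
whose integration over $(0,T)$, combined with the just-established bound $\int_0^T\|\nep\|_{L^p(\Omega)}^2\le C$ and with $\|\uep(0)\|_{L^2(\Omega)}=\|\mathbf{u}_{0\eps}\|_{L^2(\Omega)}\le C$ from \eqref{initial1}, yields \eqref{bound:nabla-u-2} (and, as a byproduct, an $\eps$-independent bound for $\sup_{t<T}\io|\uep|^2$).

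For \eqref{bound:nabla-c-2} I would test $\eqref{qapp}_{2}$ by $\cep$; using $\tfrac{\nep}{1+\eps\nep}\le\nep$ this gives $\tfrac12\tfrac{d}{dt}\io\cep^2+\io|\nabla\cep|^2+\io\cep^2\le\|\nep\|_{L^p(\Omega)}\|\cep\|_{L^{p'}(\Omega)}$, and $W^{1,2}(\Omega)\hookrightarrow L^{p'}(\Omega)$ followed by Young's inequality again produces an inequality of the form $\frac{d}{dt}\io\cep^2+\io|\nabla\cep|^2\le C\|\nep\|_{L^p(\Omega)}^2$, which upon integration and use of $\|\cep(0)\|_{L^2(\Omega)}=\|c_{0\eps}\|_{L^2(\Omega)}\le C$ (a consequence of \eqref{initial1}) yields \eqref{bound:nabla-c-2}. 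Since these computations are literally those of Lemmata~\ref{lem3.3} and~\ref{lem3.5}, the only genuinely new ingredient is the choice of $p$, and correspondingly the only place where the range $\gamma\ge2$ needs separate treatment: the borderline exponent $\tfrac{2}{3-\gamma}$ of Corollary~\ref{cor1} ceases to be admissible in \eqref{c4} once $\gamma\ge2$, and it is replaced by $p=2$, which is allowed precisely because the degradation is then at least quadratic. I do not expect any serious obstacle beyond this exponent bookkeeping.
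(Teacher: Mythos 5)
Your proof is correct, but it takes a slightly different route from the paper's. The paper disposes of the case $\gamma\ge2$ by a reduction on $f$ itself: if $f$ satisfies \eqref{f} for some $\gamma\ge2$, then it also satisfies \eqref{f} for any fixed $\gamma'\in(1,2)$ with $r$ enlarged (indeed $\mu(s^{\gamma'}-s^\gamma)\le \mu s\le (r'-r)s$ once $r'\ge r+\mu$), so Lemmata \ref{lem3.3} and \ref{lem3.5} -- proven under the blanket hypothesis $\gamma\in(1,2)$ -- apply verbatim and directly deliver \eqref{b0'} and \eqref{u0'}. You avoid that reduction and instead choose the H\"older exponent directly: $p=\tfrac{2}{3-\gamma}$ from Corollary \ref{cor1} when $\gamma\in(1,2)$, and $p=2$ from \eqref{c4} with $q=2$ when $\gamma\ge2$, after which the testing and Gronwall arguments of Lemmata \ref{lem3.3} and \ref{lem3.5} go through with $(p,p')$ replacing $(\tfrac{2}{3-\gamma},\tfrac{2}{\gamma-1})$. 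Both approaches hinge on the same $L^2_tL^p_x$ bound on $\nep$ and the same energy identities; the paper's remark is terser, while yours is more self-contained and makes explicit that the obstruction to $\tfrac{2}{3-\gamma}$ at $\gamma\ge2$ is purely that it ceases to be an admissible exponent, not any analytic difficulty. Either route is fully adequate.
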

\begin{proof}
 In the same manner as during the proof of Theorem~\ref{th 1}, we remark that if $f$ satisfies \eqref{f} for some $γ\ge 2$, $f$ satisfies $\eqref{f}$ also for smaller values (in $(1,2)$) of $γ$, if $r$ and $\mu$ are adjusted as necessary. Therefore, the lemmata of Section~\ref{sec:estimates} are applicable and \eqref{bound:nabla-u-2} follows from \eqref{b0'}, \eqref{bound:nabla-c-2} from \eqref{u0'}.
\end{proof}

The next lemma is used to demonstrate that at large times, the $L^1(\Omega)$-norm of $n$ can be controlled by the system parameters.
 \begin{lemma}\label{lem5.1}
Then
\begin{align}\label{l1}
\limsup_{t\to\infty}\sup_{\eps \in(0,1)} \io \nep(x,t)dx \le |\Omega|\Big(\frac{r_+}{\mu}\Big)^{\frac{1}{\gamma-1}}.
\end{align}
\end{lemma}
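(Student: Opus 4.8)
The plan is to integrate the first equation of \eqref{qapp} over $\Omega$, use that both the fluid convection and the taxis term vanish upon integration (thanks to $\nabla\cdot\uep=0$, $\uep=0$ on $\partial\Omega$ and $\partial_\nu\nep=\partial_\nu\cep=0$), and then estimate the reaction terms using only the structural bound $f(s)\le rs-\mu s^\gamma$ and $-\eps\nep^2\le 0$. This yields the scalar ordinary differential inequality
\begin{align*}
 \frac{d}{dt}\io\nep \le r_+\io\nep - \mu\io\nep^\gamma \qquad\text{in }(0,\infty),
\end{align*}
valid for every $\eps\in(0,1)$. Writing $y_\eps(t):=\io\nep(\cdot,t)$, the aim is to show $\limsup_{t\to\infty}y_\eps(t)\le|\Omega|(r_+/\mu)^{1/(\gamma-1)}$ uniformly in $\eps$.

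First I would invoke Jensen's inequality (equivalently, Hölder's inequality on the bounded domain $\Omega$) in the form $\io\nep^\gamma \ge |\Omega|^{1-\gamma}\big(\io\nep\big)^\gamma = |\Omega|^{1-\gamma}y_\eps^\gamma$, turning the differential inequality into the autonomous scalar inequality
\begin{align*}
 y_\eps'(t) \le r_+ y_\eps(t) - \mu|\Omega|^{1-\gamma}y_\eps(t)^\gamma, \qquad t>0.
\end{align*}
The right-hand side, as a function of $y\ge 0$, is positive precisely for $0<y<y_*:=|\Omega|(r_+/\mu)^{1/(\gamma-1)}$ and negative for $y>y_*$ (when $r_+=0$ it is nonpositive throughout and the claim is immediate, so assume $r_+>0$). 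A standard comparison argument then shows that any nonnegative solution of this differential inequality satisfies $\limsup_{t\to\infty}y_\eps(t)\le y_*$: if $y_\eps(t_0)>y_*$ for some $t_0$, then $y_\eps$ is strictly decreasing as long as it stays above $y_*$, and it cannot stay bounded away from $y_*$ from above (the decay rate is bounded below on any set $\{y\ge y_*+\delta\}$), so it must enter $[0,y_*+\delta]$ in finite time; once in $[0,y_*]$ it cannot leave $[0,y_*+\delta]$ again for that $\delta$. Since this reasoning gives, for every $\delta>0$, a bound of the form $y_\eps(t)\le y_*+\delta$ for all $t$ sufficiently large, with the threshold time depending only on $\delta$, the initial mass bound $y_\eps(0)=\io n_{0\eps}\le 2\io n_0$ from \eqref{initial2}, and the fixed parameters $r_+,\mu,\gamma,|\Omega|$ — but not otherwise on $\eps$ — the supremum over $\eps\in(0,1)$ is controlled and \eqref{l1} follows.

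The only point requiring a little care is the uniformity in $\eps$: the threshold time after which $y_\eps(t)\le y_*+\delta$ must be chosen independently of $\eps$. This is ensured because the comparison function can be taken to depend only on an upper bound for the initial data (which \eqref{initial2} provides uniformly) and on the fixed constants in the differential inequality; equivalently, one may compare $y_\eps$ from above with the (single, $\eps$-independent) solution $\bar y$ of $\bar y' = r_+\bar y - \mu|\Omega|^{1-\gamma}\bar y^\gamma$ with $\bar y(0)=2\io n_0$, which converges to $y_*$ as $t\to\infty$, and then $y_\eps(t)\le\bar y(t)$ for all $t\ge 0$ and all $\eps\in(0,1)$. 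I expect this to be the main (mild) obstacle; the rest is routine.
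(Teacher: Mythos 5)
Your proposal is correct and follows essentially the same route as the paper: integrate $\eqref{qapp}_1$, drop the divergence-free convection, the taxis term and $-\eps\nep^2$, apply Jensen/H\"older on the bounded domain, and compare $y_\eps(t)=\io\nep(\cdot,t)$ with the solution $\bar y$ of the scalar autonomous ODE with initial value $2\io n_0$ (admissible by \eqref{initial2}), which converges to $|\Omega|(r_+/\mu)^{1/(\gamma-1)}$ as $t\to\infty$. The only cosmetic difference is that the paper's comparison ODE keeps $r$ rather than $r_+$ on the right-hand side; both choices have $|\Omega|(r_+/\mu)^{1/(\gamma-1)}$ as the large-time limit, so the conclusion is unchanged. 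Your first, more verbose phase-line discussion is unnecessary once the clean comparison with the single $\eps$-independent supersolution $\bar y$ is made, which is exactly the paper's two-line argument.
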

\begin{proof}
Integrating \eqref{qapp}$_1$ over $\Omega$, we have
\begin{align}\label{l2}
\frac{d}{dt}\int_\Omega n_\varepsilon &\le r\int_\Omega n_\varepsilon -\mu\int_\Omega n_\varepsilon^\gamma \nonumber\\
&\le r\int_\Omega n_\varepsilon -\frac{\mu}{|\Omega|^{\gamma-1}}
\Big(\int_\Omega
n_\varepsilon \Big)^\gamma
\quad\text{in } (0,∞)~\text{for all}~\varepsilon\in(0,1).
\end{align}
We let $y\in C^0([0,\infty))\cap C^1((0,\infty))$ denote the solution of the initial value problem $y'=ry-\f{μ}{|\Omega|^{\gamma-1}}y^\gamma$, $y(0)=2\io n_0$ and note that $\io \nep\le y$ on $(0,\infty)$ for every $\eps \in(0,1)$ due to \eqref{l2} and \eqref{initial2}. Since $y(t)\to |\Omega|\left(\frac{r_+}{\mu}\right)^{\f1{\gamma-1}}$ as $t\to \infty$
, this shows \eqref{l1}. 
\end{proof}


In the treatment of the derivatives of $\io |\nabla \cep|^2$, the integral involving $\uep$ does not vanish like it did so often before. Therefore, further estimates for $\uep$ are required. The course of action is similar to that applied for $\cep$ in Lemma~\ref{lem3.7}. We again firstly introduce a suitable operator and collect a few basic results, analogous to Lemma~\ref{lem3.6}:


With $D(\mathcal{A}):=W^{2,2}\left(\Omega;\mathbb{R}^{2}\right) \cap W_{0}^{1,2}\left(\Omega ; \mathbb{R}^{2}\right) \cap L_{\sigma}^{2}(\Omega)$, we let $\mathcal{A}:=-\projection\Delta$ denote the realization of the Stokes operator on $D(\mathcal{A})$. Therein, $\projection$ stands for the Helmholtz projection from $L^2(\Omega;\mathbb{R}^2)$ to $L^{2}_\sigma(\Omega)$.
\begin{lemma}\label{lem5.2}
\textrm{(i)} $\mathcal{A}$ is a sectorial and positive self-adjoint operator, it possesses closed fractional powers  $\mathcal{A}^\alpha$ defined on $D(\mathcal{A}^{\alpha})$ with $\alpha\in \mathbb{R}$, where the norm is given by $D(\mathcal{A}^{\alpha}) :=\|\mathcal{A}^{\alpha}(\cdot)\|_{L^2(\Omega)}$.\\
\textrm{(ii)} For any $\alpha\in(0,1)$, the domain $D\left(\mathcal{A}^{\alpha}\right)$ is continuously embedded
in $L^2_{\sigma}(\Omega) \cap\left(W^{2\alpha,2}(\Omega;\mathbb{R}^2)\right)$.\\
\textrm{(iii)} For all $\alpha,\beta,\delta\in\mathbb{R}$ satisfying $\beta<\alpha<\delta$, there is $C>0$ such that
\begin{align*}
\|\mathcal{A}^\alpha\varphi\|_{L^2(\Omega)}\le C\|\mathcal{A}^\delta\varphi\|^{\frac{\alpha-\beta}{\delta-\beta}}_{L^2(\Omega)}
\|\mathcal{A}^\beta\varphi\|^{\frac{\delta-\alpha}{\delta-\beta}}_{L^2(\Omega)},
\end{align*}
for all $φ\in D(\mathcal{A}^{\delta})$.\\
\textrm{(iv)} Let $p>1$. If $\projection$ denotes the Helmholtz projection
from $L^p(\Omega;ℝ^2)$ to $L^p_{\sigma}(\Omega)$, then $\projection$ is a bounded linear operator.
\end{lemma}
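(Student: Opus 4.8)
The plan is to obtain Lemma~\ref{lem5.2} as a compilation of classical facts about the Stokes operator, exactly parallel to the way Lemma~\ref{lem3.6} was assembled for $\A=-\Delta+1$. For part~(i), the point is that $\mathcal{A}=-\projection\Delta$ coincides with the positive self-adjoint operator associated, via the Friedrichs (equivalently Lax--Milgram) construction, with the symmetric bilinear form $(\varphi,\psi)\mapsto\io\nabla\varphi\cdot\nabla\psi$ on $W_{0,\sigma}^{1,2}(\Omega;ℝ^2)$; on a bounded domain this form is coercive thanks to Poincaré's inequality, so $\mathcal{A}$ is self-adjoint with spectrum bounded away from $0$, hence sectorial and equipped with closed, self-adjoint fractional powers $\mathcal{A}^\alpha$ (see, e.g., Sohr's monograph or Giga's work on the Stokes semigroup). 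Part~(iii) is then simply the moment (interpolation) inequality for fractional powers of a positive sectorial operator, which holds for any such operator and in particular for $\mathcal{A}$; I would quote it from \cite{Fr} (or Henry's book), just as the corresponding estimate for $-\Delta+1$ was quoted in Lemma~\ref{lem3.6}(iii).

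Part~(ii), the continuous embedding $D(\mathcal{A}^{\alpha})\hookrightarrow W^{2\alpha,2}(\Omega;ℝ^2)$ for $\alpha\in(0,1)$, is the content of Giga's identification of the domains of fractional powers of the Stokes operator (with the $L^2$-case going back to his earlier paper); here the inclusion $D(\mathcal{A}^\alpha)\subset L^2_\sigma(\Omega)$ is built into the definition of $\mathcal{A}$, so the claimed embedding into $L^2_\sigma(\Omega)\cap W^{2\alpha,2}(\Omega;ℝ^2)$ follows at once. Part~(iv), the $L^p$-boundedness of the Helmholtz projection $\projection$ for $p\in(1,\infty)$ on a bounded domain with smooth boundary, is the classical theorem of Fujiwara and Morimoto, which I would cite in the form given, e.g., in Galdi's book.

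Since every assertion is quoted rather than derived from scratch, I expect no genuine obstacle; the only care needed is bookkeeping --- checking that each reference is stated for a bounded domain with smooth boundary and in the solenoidal $L^2$-framework used here, which it is under the standing hypothesis on $\Omega$ in Theorems~\ref{th 1}--\ref{th 2}. The proof therefore amounts to little more than assembling these citations together with a one-line indication of how each of (i)--(iv) follows.
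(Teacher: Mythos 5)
Your proposal is correct and takes essentially the same route as the paper, whose proof consists of nothing more than the citations \cite[Proposition 4.1]{GM} and \cite[Theorem 14.1]{Fr} for (i)--(iii) and \cite[Thm.~1 and Thm.~2]{fujiwara_morimoto} for (iv). You assemble the same classical facts, merely drawing on partly different standard references and adding a brief sketch of the Friedrichs-form construction behind (i).
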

\begin{proof}
 \cite[Proposition 4.1]{GM} and \cite[Theorem 14.1]{Fr}; and, for (iv), \cite[Thm. 1 and Thm. 2]{fujiwara_morimoto}.
\end{proof}

Finally relying on Lemma~\ref{ODI}, we then obtain the following regularity information on $\uep$:

\begin{lemma}\label{lem5.3}
For any $T>1$,
there exists $C>0$ such that
\begin{align}\label{h0}
&\int_{1}^{T}\int_{\Omega}|\mathcal{A}^{\frac{\delta+1}{2}}\mathbf{u}_\varepsilon(x, t)|^{2}dxdt \leq C\quad \text { for all } \varepsilon \in(0,1)
\end{align}
with $\delta=\min\{1/2,\gamma-1\}$.
\end{lemma}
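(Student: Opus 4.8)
The plan is to adapt the scheme of Lemma~\ref{lem3.7} to the third component. I would first apply the Helmholtz projection $\projection$ to the third equation of \eqref{qapp}, turning it into $\partial_t\uep+\mathcal{A}\uep=-\kappa\,\projection[(\uep\cdot\nabla)\uep]+\projection[\nep\nabla\phi]$ in $(0,\infty)$, and then test this identity with $\mathcal{A}^{\delta}\uep$. Since $\uep$ is smooth and solenoidal for positive times and $\projection$ is the orthogonal projection onto $L^2_\sigma(\Omega)$, the self-adjointness of the fractional powers of the Stokes operator (Lemma~\ref{lem5.2}) yields
\[
\frac12\frac{d}{dt}\io|\mathcal{A}^{\frac\delta2}\uep|^2+\io|\mathcal{A}^{\frac{\delta+1}2}\uep|^2=-\kappa\io(\mathcal{A}^{\delta}\uep)\cdot(\uep\cdot\nabla)\uep+\io(\mathcal{A}^{\delta}\uep)\cdot\nep\nabla\phi .
\]
Throughout I would use that, by the reduction already invoked in the proof of \eqref{bound:nabla-u-2}, the estimates of Section~\ref{sec:estimates} are available (in particular \eqref{b0}, \eqref{b0'}, \eqref{w0}), and that Lemma~\ref{lem3.2}, applied with $q=2$ and $p=p_0:=\f2{2-\delta}\in(1,\f43]$, bounds $\nep$ uniformly in $L^2\big((0,T);L^{p_0}(\Omega)\big)$ --- its admissibility conditions being met because $\delta\le\gamma-1$.

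For the convective term I would estimate, by Hölder's inequality, $\big|\io(\mathcal{A}^{\delta}\uep)\cdot(\uep\cdot\nabla)\uep\big|\le\|\mathcal{A}^{\delta}\uep\|_{L^4(\Omega)}\|\uep\|_{L^4(\Omega)}\|\nabla\uep\|_{L^2(\Omega)}$, and then, exactly as in \eqref{v4}--\eqref{v5} but now for $\uep$, bound in the planar case $\|\mathcal{A}^{\delta}\uep\|_{L^4(\Omega)}\le c\|\mathcal{A}^{\delta+\frac14}\uep\|_{L^2(\Omega)}\le c\|\mathcal{A}^{\frac{\delta+1}2}\uep\|_{L^2(\Omega)}^{\delta+\frac12}\|\mathcal{A}^{\frac\delta2}\uep\|_{L^2(\Omega)}^{\frac12-\delta}$ (via $D(\mathcal{A}^{1/4})\hookrightarrow W^{1/2,2}(\Omega)\hookrightarrow L^4(\Omega)$ and Lemma~\ref{lem5.2}; the borderline case $\delta=\f12$ is harmless, the interpolation exponent degenerating to $1$) together with $\|\nabla\uep\|_{L^2(\Omega)}=\|\mathcal{A}^{\frac12}\uep\|_{L^2(\Omega)}\le c\|\mathcal{A}^{\frac{\delta+1}2}\uep\|_{L^2(\Omega)}^{1-\delta}\|\mathcal{A}^{\frac\delta2}\uep\|_{L^2(\Omega)}^{\delta}$. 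Multiplying, the exponent of $\|\mathcal{A}^{\frac{\delta+1}2}\uep\|_{L^2(\Omega)}$ comes out to be $\f32<2$, so Young's inequality gives $\big|\io(\mathcal{A}^{\delta}\uep)\cdot(\uep\cdot\nabla)\uep\big|\le\f14\io|\mathcal{A}^{\frac{\delta+1}2}\uep|^2+c\,\|\uep\|_{L^4(\Omega)}^4\,\io|\mathcal{A}^{\frac\delta2}\uep|^2$. For the buoyancy term I would use Hölder with exponents $p_0$ and $p_0'=\f2\delta$ together with $D(\mathcal{A}^{\frac{1-\delta}2})\hookrightarrow W^{1-\delta,2}(\Omega)\hookrightarrow L^{p_0'}(\Omega)$, so that $\|\mathcal{A}^{\delta}\uep\|_{L^{p_0'}(\Omega)}\le c\|\mathcal{A}^{\delta+\frac{1-\delta}2}\uep\|_{L^2(\Omega)}=c\|\mathcal{A}^{\frac{\delta+1}2}\uep\|_{L^2(\Omega)}$, whence $\big|\io(\mathcal{A}^{\delta}\uep)\cdot\nep\nabla\phi\big|\le c\,\|\nep\|_{L^{p_0}(\Omega)}\|\mathcal{A}^{\frac{\delta+1}2}\uep\|_{L^2(\Omega)}\le\f14\io|\mathcal{A}^{\frac{\delta+1}2}\uep|^2+c\,\|\nep\|_{L^{p_0}(\Omega)}^2$. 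Altogether this leaves
\[
\frac{d}{dt}\io|\mathcal{A}^{\frac\delta2}\uep|^2+\io|\mathcal{A}^{\frac{\delta+1}2}\uep|^2\le c\,\|\uep\|_{L^4(\Omega)}^4\,\io|\mathcal{A}^{\frac\delta2}\uep|^2+c\,\|\nep\|_{L^{p_0}(\Omega)}^2\qquad\text{in }(0,\infty).
\]

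To conclude I would feed this into Lemma~\ref{ODI}, but not starting at $t=0$: since $\mathbf{u}_{0\eps}$ is only bounded in $L^2(\Omega)$, $\io|\mathcal{A}^{\delta/2}\mathbf{u}_{0\eps}|^2$ need not be uniformly bounded. Instead, interpolating $\|\mathcal{A}^{\frac\delta2}\uep\|_{L^2(\Omega)}\le c\|\nabla\uep\|_{L^2(\Omega)}^{\delta}\|\uep\|_{L^2(\Omega)}^{1-\delta}$ and using \eqref{b0}, \eqref{b0'}, one obtains $\int_0^1\io|\mathcal{A}^{\delta/2}\uep|^2\le C$ uniformly in $\eps$, hence there is $t_\eps\in(0,1)$ with $\io|\mathcal{A}^{\delta/2}\uep(\cdot,t_\eps)|^2\le C$. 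Applying Lemma~\ref{ODI} on $(t_\eps,T)$ with $y=\io|\mathcal{A}^{\delta/2}\uep|^2$, $h=\io|\mathcal{A}^{(\delta+1)/2}\uep|^2$, $a(t)=c\|\uep(\cdot,t)\|_{L^4(\Omega)}^4$ and $b(t)=c\|\nep(\cdot,t)\|_{L^{p_0}(\Omega)}^2$ --- whose integrals over $(t_\eps,T)$ are uniformly bounded by \eqref{w0} and Lemma~\ref{lem3.2}, respectively --- gives $\int_{t_\eps}^T\io|\mathcal{A}^{(\delta+1)/2}\uep|^2\le C$, and since $t_\eps<1$ this is precisely \eqref{h0}.

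The main obstacle will be the exponent bookkeeping: one has to verify that $\delta=\min\{\f12,\gamma-1\}$ is exactly the value that makes the three embedding steps --- $\mathcal{A}^{\delta}\uep\in L^4(\Omega)$, $\mathcal{A}^{\delta}\uep\in L^{p_0'}(\Omega)$ and $\nep\in L^2((0,T);L^{p_0}(\Omega))$ --- simultaneously admissible (several of them only critically), and that the genuinely Navier--Stokes-specific convective term produces only the subquadratic power $\f32$ of $\|\mathcal{A}^{(\delta+1)/2}\uep\|_{L^2(\Omega)}$ that Young's inequality can absorb. The secondary difficulty is the shift of the initial time from $0$ to $1$, forced by the merely $L^2$ regularity of $\mathbf{u}_{0\eps}$ and handled above via the mean-value choice of $t_\eps$. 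For a Stokes fluid ($\kappa=0$) this convective term is absent and the argument is considerably shorter.
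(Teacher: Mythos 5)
Your proposal is correct and follows essentially the same strategy as the paper: test the projected equation with $\mathcal{A}^{\delta}\uep$, bound the convective term by interpolating fractional powers of the Stokes operator so that Young's inequality can absorb a subquadratic power of $\|\mathcal{A}^{(\delta+1)/2}\uep\|_{L^2}$, bound the buoyancy term using the $L^qL^p$ estimate for $\nep$ from Lemma~\ref{lem3.2}, pick a starting time $t_\eps\in(0,1)$ by a mean-value argument, and invoke Lemma~\ref{ODI}. The only cosmetic differences are that the paper interpolates all three factors of the Hölder split of the convective term (landing on $a(t)=c\|\nabla\uep\|_{L^2}^2$ rather than your $a(t)=c\|\uep\|_{L^4}^4$, both of which are integrable) and treats the buoyancy term for $\gamma\in(1,2)$ and $\gamma\ge2$ separately, whereas your choice $p_0=\frac{2}{2-\delta}$ handles both cases at once.
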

\begin{proof}
By \eqref{bound:nabla-u-2}
, there exists $c_1>0$ such that
\begin{align}\label{h1}
&\int_{0}^{1}\int_{\Omega}|\nabla\mathbf{u}_\varepsilon(x, t)|^{2}dxdt \leq c_1\quad
\text { for all } \varepsilon \in(0,1).
\end{align}
For any fixed $\varepsilon\in(0,1)$, we can thus find $t_\varepsilon\in(0,1)$ satisfying
\begin{align}\label{h1'}
\|\mathcal{A}^{\frac{\delta}{2}}\mathbf{u}_{\varepsilon}(\cdot,t_\varepsilon) \|_{L^{2}(\Omega)}
 \leq c_{1}.
\end{align}
First, we rewrite \eqref{qapp}$_3$ and \eqref{qapp}$_4$ in the form that
\begin{align}\label{h2}
\mathbf{u}_{\varepsilon t}-\mathcal{A} \mathbf{u}_\varepsilon
        =-\kappa \projection(\mathbf{u}_{\varepsilon }\cdot\nabla)\mathbf{u}_{\varepsilon }+ \projection n_\varepsilon\nabla\phi, \quad \varepsilon\in(0,1),~t>0.
\end{align}
Multiplying $\mathcal{A}^{\delta}\mathbf{u}_\varepsilon$ to \eqref{h2}
 and using the self-adjointness of the operator $\mathcal{A}$, we have that
\begin{align}\label{h3}
&\quad\frac{1}{2}\frac{d}{d t} \int_{\Omega}|\mathcal{A}^{\frac{\delta}{2}}\mathbf{u}_\varepsilon|^{2}
+\int_{\Omega}|\mathcal{A}^{\frac{\delta+1}{2}}\mathbf{u}_\varepsilon|^{2} \nonumber\\
&=-\kappa\int_{\Omega}\projection(\mathbf{u}_{\varepsilon }\cdot\nabla)\mathbf{u}_{\varepsilon }\cdot \mathcal{A}^{\delta}\mathbf{u}_{\varepsilon }
+\int_{\Omega} \projection(n_\varepsilon\nabla\phi)\cdot \mathcal{A}^{\delta}\mathbf{u}_{\varepsilon }
\quad \text { in } (0,∞) \text { for all } \varepsilon \in(0,1).
\end{align}
With $p>2$ taken to satisfy $\frac{2}{p}+\delta<1$,
\begin{align}\label{h4}
-\int_{\Omega}\projection(\mathbf{u}_{\varepsilon }\cdot\nabla)\mathbf{u}_{\varepsilon }\cdot
 \mathcal{A}^{\delta}\mathbf{u}_{\varepsilon }
&\le \| \mathcal{A}^{\delta}\mathbf{u}_\varepsilon\|_{L^{\frac{2p}{p-2}}(\Omega)}
 \|\projection(\mathbf{u}_{\varepsilon }\cdot\nabla)\mathbf{u}_{\varepsilon}\|_{L^{\frac{2p}{p+2}}(\Omega)}\nonumber\\
 &\le c_2\| \mathcal{A}^{\delta}\mathbf{u}_\varepsilon\|_{L^{\frac{2p}{p-2}}(\Omega)}
\|(\mathbf{u}_{\varepsilon }\cdot\nabla)\mathbf{u}_{\varepsilon}\|_{L^{\frac{2p}{p+2}}(\Omega)}
  \\
 &\le c_2\| \mathcal{A}^{\delta}\mathbf{u}_\varepsilon\|_{L^{\frac{2p}{p-2}}(\Omega)}
 \| \nabla \mathbf{u}_\varepsilon\|_{L^{2}(\Omega)}
  \|\mathbf{u}_\varepsilon\|_{L^{p}(\Omega)}\nonumber
  \quad \text { in } (0,∞)\text { for all } \varepsilon \in(0,1)
\end{align}
with $c_2=c_2(\frac{2p}{p-2})>0$ originating from Lemma~\ref{lem5.2}(iv).
By using the Sobolev inequality \cite[Theorem 6.7]{NPV} and Lemma \ref{lem5.2}(ii) and (iii), we obtain
\begin{align}\label{h5}
\| \mathcal{A}^{\delta}\mathbf{u}_\varepsilon\|_{L^{\frac{2p}{p-2}}(\Omega)}
&\le c_3\|\mathcal{A}^{\delta}\mathbf{u}_\varepsilon\|_{W^{\frac{2}{p}, 2}(\Omega)}\nonumber\\
&\le c_4\| \mathcal{A}^{\delta+\frac{1}{p}}\mathbf{u}_\varepsilon\|_{L^{2}(\Omega)}\nonumber\\
&\le c_5\|
\mathcal{A}^{\frac{\delta+1}{2}}\mathbf{u}_\varepsilon\|^{\delta+\frac{2}{p}}_{L^{2}(\Omega)}
\| \mathcal{A}^{\frac{\delta}{2}}\mathbf{u}_\varepsilon\|^{1-\delta-\frac{2}{p}}_{L^{2}(\Omega)}
\quad \text { in } (0,∞) \text { for all } \varepsilon \in(0,1)
\end{align}
with positive constants $c_3,c_4,c_5$ depending on $\sigma,\gamma,p$ and $\Omega$.
Similarly, still by the Sobolev imbedding inequality and Lemma \ref{lem5.2}(ii), (iii),
we can find $c_6,c_7,c_8>0$ such that
\begin{align}\label{h6}
\|\mathbf{u}_\varepsilon\|_{L^{p}(\Omega)}
&\le c_6\|\mathbf{u}_\varepsilon\|_{W^{1-\frac{2}{p}, 2}(\Omega)}\nonumber\\
&\le c_7\| \mathcal{A}^{\frac{1}{2}-\frac{1}{p}}\mathbf{u}_\varepsilon\|_{L^{2}(\Omega)}\nonumber\\
&\le c_8\|
\mathcal{A}^{\frac{\delta+1}{2}}\mathbf{u}_\varepsilon\|^{1-\delta-\frac{2}{p}}_{L^{2}(\Omega)}
\|\mathcal{A}^{\frac{\delta}{2}}\mathbf{u}_\varepsilon\|^{\delta+\frac{2}{p}}_{L^{2}(\Omega)}
\quad \text { in } (0,∞) \text { for all } \varepsilon \in(0,1).
\end{align}
A substitution of \eqref{h5} and \eqref{h6} into \eqref{h4} and Young's inequality give that with $c_9=\kappa^2c_2^2c_5^2c_8^2$
\begin{align}\label{h7}
-\kappa\int_{\Omega}\projection(\mathbf{u}_{\varepsilon }\cdot\nabla)\mathbf{u}_{\varepsilon }
 \cdot\mathcal{A}^{\delta}\mathbf{u}_{\varepsilon }
 &\le \kappa c_2c_5c_8\| \mathcal{A}^{\frac{\delta+1}{2}}\mathbf{u}_\varepsilon\|_{L^{2}(\Omega)}
 \| \nabla \mathbf{u}_\varepsilon\|_{L^{2}(\Omega)}
  \| \mathcal{A}^{\frac{\delta}{2}}\mathbf{u}_\varepsilon\|_{L^{2}(\Omega)}\nonumber\\
  &\le \frac{1}{4}\| \mathcal{A}^{\frac{\delta+1}{2}}\mathbf{u}_\varepsilon\|^2_{L^{2}(\Omega)}
 +c_9 \| \nabla \mathbf{u}_\varepsilon\|^2_{L^{2}(\Omega)}
  \| \mathcal{A}^{\frac{\delta}{2}}\mathbf{u}_\varepsilon\|^2_{L^{2}(\Omega)}
\end{align}
in $(0,∞)$ for all $\varepsilon \in(0,1)$.

We deal with the remaining term $\int_{\Omega}\projection(n_\varepsilon\nabla\phi)\cdot \mathcal{A}^{\delta}\mathbf{u}_{\varepsilon }$ for the cases $\gamma\in(1,2)$
and $\gamma\ge2$ separately, beginning with $γ\in(1,2)$:

Since $\delta\le\gamma-1$, the Sobolev imbedding inequality and Lemma \ref{lem5.2}(ii) ensure the existence of $c_{10},c_{11}$ satisfying
\begin{align}\label{h8}
\|\mathcal{A}^{\delta}\mathbf{u}_\varepsilon\|_{L^{\frac{2}{\gamma-1}}(\Omega)}
&\le c_{10}\|\mathcal{A}^{\delta}\mathbf{u}_\varepsilon\|_{W^{1-\delta,2}(\Omega)}\nonumber\\
&\le
c_{11}\|\mathcal{A}^{\frac{\delta+1}{2}}\mathbf{u}_\varepsilon\|_{L^{2}(\Omega)}
\quad \text { for all } t>0\text { and } \varepsilon \in(0,1).
\end{align}
It follows by the H\"{o}lder inequality and Young's inequality that in $(0,∞)$ with some $c_{12}>0$ taken from an application of Lemma~\ref{lem5.2}(iv) to $L^{\frac{2}{3-γ}}(\Omega)$,
\begin{align*}
\int_{\Omega}\projection(n_\varepsilon\nabla\phi)\cdot \mathcal{A}^{\delta}\mathbf{u}_{\varepsilon }
 &\le
 \| \projection(n_\varepsilon\nabla\phi)\|_{L^{\frac{2}{3-\gamma}}(\Omega)}
 \|\mathcal{A}^{\delta}\mathbf{u}_\varepsilon\|_{L^{\frac{2}{\gamma-1}}(\Omega)}\nonumber\\
   &\le c_{12}\|\nabla\phi\|_{L^{\infty}(\Omega)}
   \| n_\varepsilon\|_{L^{\frac{2}{3-\gamma}}(\Omega)}
 \|\mathcal{A}^{\delta}\mathbf{u}_\varepsilon\|_{L^{\frac{2}{\gamma-1}}(\Omega)}\nonumber\\
   &\le
   c_{11}^2 c_{12}^2\|\nabla\phi\|^2_{L^{\infty}(\Omega)}\| n_\varepsilon\|^2_{L^{\frac{2}{3-\gamma}}(\Omega)}+
 \frac{1}{4c_{11}^2}
 \|\mathcal{A}^{\delta}\mathbf{u}_\varepsilon\|^2_{L^{\frac{2}{\gamma-1}}(\Omega)} \text { for all } \varepsilon \in(0,1),
\end{align*}
which in conjunction with \eqref{h8} tells us that
\begin{align}\label{h9}
\int_{\Omega}\projection(n_\varepsilon\nabla\phi) \cdot\mathcal{A}^{\delta}\mathbf{u}_{\varepsilon }
\le
   c_{13}\| n_\varepsilon\|^2_{L^{\frac{2}{3-\gamma}}(\Omega)}+
 \frac{1}{4}\|\mathcal{A}^{\frac{\delta+1}{2}}\mathbf{u}_\varepsilon\|^2_{L^{2}(\Omega)}
 \quad \text { in } (0,∞) \text { for all } \varepsilon \in(0,1)
\end{align}
with $c_{13}=c_{11}^2c_{12}^2\norm[\Lom{\infty}]{\nabla \phi}>0$.\\
In the case $\gamma\ge2$, we instead introduce $c_{14}>0$ such that
\begin{equation*}
\|\mathcal{A}^{\delta}\mathbf{u}_\varepsilon\|_{L^{2}(\Omega)}\le c_{14}\|\mathcal{A}^{\frac{\delta+1}{2}}\mathbf{u}_\varepsilon\|_{L^{2}(\Omega)}\quad\text{in } (0,∞) \text{ for all } \eps \in(0,1)
\end{equation*}
and, with $c_{15}=c_{14}^2\norm[\Lom\infty]{\nabla\phi}^2$, estimate
 \begin{align}\label{h9b}
\int_{\Omega}\projection(n_\varepsilon\nabla\phi) \cdot\mathcal{A}^{\delta}\mathbf{u}_{\varepsilon }
 &\le
 \| \projection(n_\varepsilon\nabla\phi)\|_{L^{2}(\Omega)}
 \|\mathcal{A}^{\delta}\mathbf{u}_\varepsilon\|_{L^{2}(\Omega)}\nonumber\\
   &\le \|\nabla\phi\|_{L^{\infty}(\Omega)}
   \| n_\varepsilon\|_{L^{2}(\Omega)}
 \|\mathcal{A}^{\delta}\mathbf{u}_\varepsilon\|_{L^{2}(\Omega)}\nonumber\\
   &\le
   c_{14}^2 \|\nabla\phi\|^2_{L^{\infty}(\Omega)}\| n_\varepsilon\|^2_{L^{2}(\Omega)}+
 \frac{1}{4c_{14}^2}
 \|\mathcal{A}^{\delta}\mathbf{u}_\varepsilon\|^2_{L^{2}(\Omega)}
 \nonumber\\
 &\le c_{15}\| n_\varepsilon\|^2_{L^{2}(\Omega)}+
 \frac{1}{4}\|\mathcal{A}^{\frac{\delta+1}{2}}\mathbf{u}_\varepsilon\|^2_{L^{2}(\Omega)}.
\end{align}
In both cases $γ\in(1,2)$ and $γ\ge 2$, we therefore can conclude from \eqref{h3}, \eqref{h7} and either \eqref{h9} or \eqref{h9b} that
\begin{align}\label{h10}
&\quad\frac{d}{d t} \int_{\Omega}|\mathcal{A}^{\frac{\delta}{2}}\mathbf{u}_\varepsilon(x,t)|^{2} dx
+\int_{\Omega}|\mathcal{A}^{\frac{\delta+1}{2}}\mathbf{u}_\varepsilon(x,t)|^{2} dx \le a(t)
  \| \mathcal{A}^{\frac{\delta}{2}}\mathbf{u}_\varepsilon(\cdot,t)\|^2_{L^{2}(\Omega)}
  + b(t)
\end{align}
for all $t>0$ and $\eps \in(0,1)$, where
\begin{align*}
a_{\eps }(t) =
2c_9\| \nabla \mathbf{u}_\varepsilon(\cdot,t)\|^2_{L^{2}(\Omega)}
\quad \text{and}\quad
b_{\eps }(t)=\begin{cases}2c_{13}\| n_\varepsilon(\cdot,t)\|^2_{L^{\frac{2}{3-\gamma}}(\Omega)}, &γ\in(1,2),\\
2c_{15}\norm[\Lom2]{\nep(\cdot,t)}^2,& \gamma\ge 2.\end{cases}
\end{align*}
We observe that the finiteness of $\sup_{\eps \in(0,1)}\int_0^T a_{\eps }(t)dt$ is covered by \eqref{bound:nabla-u-2}
and that of $\sup_{\eps \in(0,1)}\int_0^T b_{\eps }(t)dt$ by either Corollary~\ref{cor1} or \eqref{c3}, so that Lemma~\ref{ODI} is applicable and, if combined with \eqref{h1'}, ensures \eqref{h0}.
\end{proof}

With this better regularity of $\uep$ ensured, we are able to deal with the integrals involving convective terms arising during an investigation of the temporal evolution of
\[
 \io \nep\ln \nep+ \f12\io |∇\cep|^2.
\]

\begin{lemma}\label{lem5.4}
Let $C_*>0$ be given in \eqref{GN}. 
If $\mu> r_+\big(16C_*^4|\Omega|/3\big)^{\gamma-1}$, then there is $t_1>1$ such that for any $T>t_1$,
we can find $C>0$ ensuring
\begin{align}\label{m0}
&\int_{\Omega}|\nabla c_\varepsilon(\cdot, t)|^{2}
\leq C\quad\text { for all } t\in(t_1,T)\text { and } \varepsilon \in(0,1)
\end{align}
and
\begin{align}\label{m0'}
&\int_{t_1}^{T}\int_\Omega n_{\varepsilon}^{2}(x,t)dxdt
+\int_{t_1}^{T}\int_{\Omega}|\Delta c_\varepsilon(x, t)|^{2}dxdt \leq C\quad \text { for all } \varepsilon \in(0,1).
\end{align}
\end{lemma}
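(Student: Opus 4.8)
I would derive both \eqref{m0} and \eqref{m0'} from a single differential inequality for the shifted eventual quasi-energy functional
\[
y_{\eps}(t):=\io\Big(\nep\ln\nep+\f1e\Big)+\f12\io|\nabla \cep|^2\ge0 ,
\]
combined with the ordinary-differential-inequality Lemma~\ref{ODI}. That inequality comes from testing $\eqref{qapp}_1$ against $\ln\nep+1$ and $\eqref{qapp}_2$ against $-\Delta\cep$. Using that $\nep>0$ and $\cep$ is smooth for $t>0$, that $\nabla\cdot\uep=0$ in $\Omega$ and $\uep=\partial_\nu\cep=0$ on $\partial\Omega$ (so $-\io(\ln\nep+1)\uep\cdot\nabla\nep=\io(\nep\ln\nep)\,\nabla\cdot\uep=0$), and merging the chemotactic cross term $\io\nabla\nep\cdot\nabla\cep=-\io\nep\Delta\cep$ with the signal-production term $-\io\Delta\cep\cdot\tfrac{\nep}{1+\eps\nep}$, integration by parts gives in $(0,\infty)$
\begin{align*}
y_{\eps}'(t)+\io\f{|\nabla\nep|^2}{\nep}+\io|\Delta\cep|^2+\io|\nabla\cep|^2
&=-\io\Big(\nep+\f{\nep}{1+\eps\nep}\Big)\Delta\cep+\io\Delta\cep\,\uep\cdot\nabla\cep\\
&\qquad+\io(\ln\nep+1)f(\nep)-\eps\io(\ln\nep+1)\nep^2 .
\end{align*}

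\textbf{Estimating the right-hand side.} The two source terms are harmless: from $f(0)=0$, $f\in C^1$ and $f(s)\le rs-\mu s^\gamma$ one checks $(\ln s+1)f(s)\le C_1$ and $-(\ln s+1)s^2\le C_1$ for all $s>0$, so together they contribute at most $2C_1|\Omega|$. For the convective term I would invoke Lemma~\ref{lem5.3}: since $\delta=\min\{\tfrac12,\gamma-1\}>0$, two-dimensional Sobolev embedding gives $D(\mathcal{A}^{(\delta+1)/2})\hookrightarrow W^{\delta+1,2}(\Omega;\mathbb{R}^2)\hookrightarrow L^\infty(\Omega;\mathbb{R}^2)$, whence $\|\uep(\cdot,t)\|_{L^\infty(\Omega)}\le c\,\|\mathcal{A}^{(\delta+1)/2}\uep(\cdot,t)\|_{L^2(\Omega)}$ and $\int_1^T\|\uep(\cdot,t)\|_{L^\infty(\Omega)}^2\,dt\le C(T)$ uniformly in $\eps$; therefore $\io\Delta\cep\,\uep\cdot\nabla\cep\le\tfrac18\io|\Delta\cep|^2+2\|\uep\|_{L^\infty(\Omega)}^2\io|\nabla\cep|^2$, and $\io|\nabla\cep|^2\le2y_{\eps}$. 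The decisive term is $-\io(\nep+\tfrac{\nep}{1+\eps\nep})\Delta\cep\le2\io\nep|\Delta\cep|\le2\|\nep\|_{L^2(\Omega)}\|\Delta\cep\|_{L^2(\Omega)}$, which I would estimate by Young's inequality with the weight $\tfrac38$ together with the Gagliardo--Nirenberg inequality \eqref{GN} applied to $\sqrt{\nep}$ (so that $\io\nep^2=\|\sqrt{\nep}\|_{L^4(\Omega)}^4\le2C_*^4(\io\nep)\io\f{|\nabla\nep|^2}{\nep}+8C_*^4(\io\nep)^2$), obtaining
\[
2\|\nep\|_{L^2(\Omega)}\|\Delta\cep\|_{L^2(\Omega)}\le\f38\io|\Delta\cep|^2+\f{16C_*^4}{3}\Big(\io\nep\Big)\io\f{|\nabla\nep|^2}{\nep}+\f{64C_*^4}{3}\Big(\io\nep\Big)^2 .
\]
It is exactly the optimisation of this single Young inequality against \eqref{GN} that produces the constant $16C_*^4|\Omega|/3$ appearing in the hypothesis on $\mu$.

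\textbf{Using smallness of the eventual mass and the ODI lemma.} The assumption $\mu>r_+(16C_*^4|\Omega|/3)^{\gamma-1}$ forces $|\Omega|(r_+/\mu)^{1/(\gamma-1)}<\tfrac{3}{16C_*^4}$, so by Lemma~\ref{lem5.1} there are $m<\tfrac{3}{16C_*^4}$ and $\hat t_1>1$ with $\sup_{\eps\in(0,1)}\io\nep(\cdot,t)\le m$ for all $t\ge\hat t_1$; then $\tfrac{16C_*^4m}{3}<1$, the above estimates absorb into the left-hand side, and for $t\ge\hat t_1$ one is left with
\[
y_{\eps}'(t)+\Big(1-\tfrac{16C_*^4m}{3}\Big)\io\f{|\nabla\nep|^2}{\nep}+\tfrac12\io|\Delta\cep|^2\le4\|\uep\|_{L^\infty(\Omega)}^2\,y_{\eps}(t)+C_3 ,
\]
with $C_3$ independent of $\eps$ and $t$. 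I would then set $t_1:=\hat t_1+1$ and, for each $\eps$, choose by a mean-value-in-time argument — using \eqref{bound:nabla-c-2} and the bound $\int_0^{t_1}\io\nep^{1+\theta}\le C$ for some small $\theta\in(0,\gamma-1)$ from Lemma~\ref{lem3.2} (which controls $\int_0^{t_1}\io\nep\ln\nep$) — a time $t_{\eps}\in(\hat t_1,t_1)$ at which $y_{\eps}(t_{\eps})$ is bounded uniformly in $\eps$. Applying Lemma~\ref{ODI} on $(t_{\eps},T)$ with $a(t)=4\|\uep\|_{L^\infty(\Omega)}^2$ (whose integral over $(t_\eps,T)\subset(1,T)$ is finite by Lemma~\ref{lem5.3}) and $b\equiv C_3$ then yields $\sup_{(t_\eps,T)}y_{\eps}\le C(T)$ and $\int_{t_\eps}^T\big(\io\f{|\nabla\nep|^2}{\nep}+\io|\Delta\cep|^2\big)\le C(T)$. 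Restricting to $(t_1,T)\subset(t_\eps,T)$ gives \eqref{m0} and the $\Delta\cep$-part of \eqref{m0'}, and feeding $\int_{t_1}^T\io\f{|\nabla\nep|^2}{\nep}\le C(T)$ back into the Gagliardo--Nirenberg estimate together with $\io\nep\le m$ gives $\int_{t_1}^T\io\nep^2\le C(T)$, completing \eqref{m0'}.

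\textbf{Expected main difficulty, and the case $\gamma\ge2$.} The crux is that $\io\nep|\Delta\cep|$ cannot be controlled by the a priori estimates of Section~\ref{sec:estimates} on its own — it is genuinely of the same strength as the dissipation — so the whole argument hinges on trading it against $\io\f{|\nabla\nep|^2}{\nep}$ and $\io|\Delta\cep|^2$ at the price of a factor $\io\nep$, which is admissible only once the eventual mass is small (Lemma~\ref{lem5.1}); making the numerical constants match is the delicate part. A secondary nuisance is securing a uniform-in-$\eps$ bound on $y_{\eps}$ at the starting time, which is why one passes through a freely chosen $t_{\eps}$. Finally, whenever auxiliary bounds from Section~\ref{sec:estimates} stated only for $\gamma\in(1,2)$ are needed, one argues — exactly as in the proof of Theorem~\ref{th 1} and of \eqref{bound:nabla-u-2}--\eqref{bound:nabla-c-2} — that $f$ then also satisfies \eqref{f} for some exponent in $(1,2)$ after adjusting $r$ and $\mu$; Lemma~\ref{lem5.1}, however, must be applied with the \emph{original} $\gamma,r,\mu$, which is why the smallness condition on $\mu$ is phrased in terms of the given exponent $\gamma$ both here and in Theorem~\ref{th 2}.
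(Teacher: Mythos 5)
Your proposal is correct and follows essentially the same route as the paper: you form the quasi-energy $\io\nep\ln\nep+\f12\io|\nabla\cep|^2$ (with the harmless additive constant to ensure nonnegativity), test $\eqref{qapp}_1$ by $\ln\nep+1$ and $\eqref{qapp}_2$ by $-\Delta\cep$, control the convective term via Lemma~\ref{lem5.3} and the embedding $D(\mathcal{A}^{(\delta+1)/2})\hookrightarrow L^\infty$, trade $\io\nep^2$ against $\io\frac{|\nabla\nep|^2}{\nep}$ through the Gagliardo--Nirenberg inequality \eqref{GN}, absorb this using the eventual mass bound from Lemma~\ref{lem5.1} (which, as you rightly stress, must be invoked with the original $\gamma,r,\mu$ regardless of any $\gamma\in(1,2)$ reduction used for Section~\ref{sec:estimates} estimates), and close the argument with Lemma~\ref{ODI} started at an $\eps$-dependent time $t_\eps$ chosen by a mean-value argument. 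The only cosmetic divergence is that you merge the chemotactic cross term $-\io\nep\Delta\cep$ and the production term $-\io\frac{\nep}{1+\eps\nep}\Delta\cep$ into a single $2\io\nep|\Delta\cep|$ before one Young/GN step, whereas the paper treats the two $\io\nep\Delta\cep$-type contributions separately (each costing $\io\nep^2+$ a fraction of $\io|\Delta\cep|^2$) and only then applies GN to $2\io\nep^2$; both bookkeeping choices lead to the identical smallness threshold $\io\nep<\frac{3}{16C_*^4}$ and hence the same hypothesis $\mu>r_+\bigl(16C_*^4|\Omega|/3\bigr)^{\gamma-1}$.
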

\begin{proof}
By utilizing \eqref{qapp}$_1$ and integrating by parts, we have
\begin{align}\label{m3}
\frac{d}{d t} \int_{\Omega}n_\varepsilon\ln n_\varepsilon
&=\int_{\Omega}n_{\varepsilon t}\ln n_\varepsilon
+\int_{\Omega}n_{\varepsilon t} \nonumber\\
&=-\int_{\Omega}\frac{|\nabla n_\varepsilon|^2}{n_\varepsilon}
-\int_{\Omega}\mathbf{u}_\varepsilon \cdot\nabla \left(\int_{0}^{n_\varepsilon}\ln \xi d\xi\right)
-\int_{\Omega}\nabla n_\varepsilon \cdot \nabla c_\varepsilon
\nonumber\\
&\quad+r\int_{\Omega}n_\varepsilon (\ln n_\varepsilon+1)
-\mu\int_{\Omega}n^\gamma_\varepsilon(\ln n_\varepsilon+1)
-\varepsilon\int_{\Omega}n^2_\varepsilon(\ln n_\varepsilon+1)
\nonumber\\
&\le-4\int_{\Omega}|\nabla \sqrt{n_\varepsilon}|^2
+\int_{\Omega}n_\varepsilon \Delta c_\varepsilon +c_2
\quad \text { in } (0,\infty)\text { for all } \varepsilon \in(0,1),
\end{align}
where $c_2=|\Omega|\sup_{s>0} (rs-\mu s^\gamma)(1+\ln s) - |\Omega|\inf_{s>0} s^2\ln s$, and
the second term on
the right-hand side can be estimated by Young's inequality according to
\begin{align}\label{m4}
\int_{\Omega}n_\varepsilon \Delta c_\varepsilon \le \int_{\Omega}n^2_\varepsilon
+\frac{1}{4}\int_{\Omega} |\Delta c_\varepsilon|^2
\quad \text { in } (0,\infty)\text { for all } \varepsilon \in(0,1).
\end{align}
Thus, it can be obtained by \eqref{m3} that
\begin{align}\label{m5}
\frac{d}{d t} \int_{\Omega}n_\varepsilon\ln n_\varepsilon
+4\int_{\Omega}|\nabla \sqrt{n_\varepsilon}|^2
&\le\frac{1}{4}\int_{\Omega} |\Delta c_\varepsilon|^2+ \int_{\Omega}n^2_\varepsilon+c_2
\quad \text { in } (0,\infty)\text { for all } \varepsilon \in(0,1).
\end{align}
 We multiply \eqref{qapp}$_2$ by $\Delta c_\varepsilon$ and use Young's inequality to get
\begin{align}\label{m6}
\frac{1}{2}\frac{d}{d t} \int_{\Omega}|\nabla c_\varepsilon|^2
&+\int_{\Omega}|\Delta c_\varepsilon|^2 +\int_{\Omega}|\nabla c_\varepsilon|^2 \nonumber\\
&=\int_{\Omega} \mathbf{u}_\varepsilon\cdot\nabla c_\varepsilon \Delta c_\varepsilon
+\int_{\Omega}\frac{n_\varepsilon}{1+\varepsilon n_\varepsilon}\Delta c_\varepsilon \nonumber\\
&\le \frac{1}{2}\int_{\Omega}|\Delta c_\varepsilon|^2
+\int_{\Omega}|\mathbf{u}_\varepsilon\cdot\nabla c_\varepsilon|^2
+\int_{\Omega}n^2_\varepsilon
\quad \text { in } (0,\infty)\text { for all } \varepsilon \in(0,1),
\end{align}
where the Sobolev imbedding inequality tells us that for $\delta=\min\{\frac12,γ-1\}$ as in Lemma \ref{lem5.3} and with some $c_3=c_3(\gamma,\Omega)>0$,
\begin{align*}
\int_{\Omega}|\mathbf{u}_\varepsilon\cdot\nabla c_\varepsilon|^2
&\le \|\mathbf{u}_\varepsilon\|^2_{L^{\infty}(\Omega)}
\|\nabla c_\varepsilon\|^2_{L^{2}(\Omega)}\nonumber\\
&\le c_3\|\mathcal{A}^{\frac{\delta+1}{2}} \mathbf{u}_\varepsilon\|^2_{L^{2}(\Omega)}
\|\nabla c_\varepsilon\|^2_{L^{2}(\Omega)}
\quad \text { in } (0,\infty)\text { for all } \varepsilon \in(0,1).
\end{align*}
Thus, we have
\begin{align}\label{m7}
&\quad\frac{1}{2}\frac{d}{d t} \int_{\Omega}|\nabla c_\varepsilon|^2
+\frac{1}{2}\int_{\Omega}|\Delta c_\varepsilon|^2
+\int_{\Omega}|\nabla c_\varepsilon|^2 \nonumber\\
&
\le c_3\|\mathcal{A}^{\frac{\delta+1}{2}} \mathbf{u}_\varepsilon\|^2_{L^{2}(\Omega)}
\|\nabla c_\varepsilon\|^2_{L^{2}(\Omega)}
+\int_{\Omega}n_\varepsilon^2
\quad \text { in } (0,\infty)\text { for all } \varepsilon \in(0,1).
\end{align}
A combination of \eqref{m5} and \eqref{m7} yields that
\begin{align}\label{b7}
\frac{d}{d t} \Big(\int_{\Omega}n_\varepsilon\ln n_\varepsilon
&+\frac{1}{2}\int_{\Omega}|\nabla c_\varepsilon|^2 \Big)
+4\int_{\Omega}|\nabla \sqrt{n_\varepsilon}|^2
+\frac{1}{4}\int_{\Omega}|\Delta c_\varepsilon|^2 \nonumber\\
&\le c_3\|\mathcal{A}^{\frac{\delta+1}{2}} \mathbf{u}_\varepsilon\|^2_{L^{2}(\Omega)}
\|\nabla c_\varepsilon\|^2_{L^{2}(\Omega)}
+2\int_{\Omega}n_\varepsilon^2 +c_2
\quad \text { in } (0,\infty)\text { for all } \varepsilon \in(0,1).
\end{align}
An application of the Gagliardo-Nirenberg inequality \eqref{GN} shows that
\begin{align}\label{b8}
\|n_\varepsilon\|^2_{L^2(\Omega)}&=\|\sqrt{n_\varepsilon}\|^4_{L^4(\Omega)}\nonumber\\
&
\le 8C_*^4\|\nabla \sqrt{n_\varepsilon}\|^2_{L^2(\Omega)}\|\sqrt{n_\varepsilon}\|^2_{L^2(\Omega)}
+8C_*^4\|\sqrt{n_\varepsilon}\|^2_{L^2(\Omega)}
\quad \text { in } (0,\infty)\text { for all } \varepsilon \in(0,1).
\end{align}
Therefore,
\begin{align}\label{better-b7}
&\quad\frac{d}{d t} \Big(\int_{\Omega}n_\varepsilon\ln n_\varepsilon+\frac{1}{2}\int_{\Omega}|\nabla c_\varepsilon|^2 \Big)
+\left(4-16C_*^4\io \nep \right)\int_{\Omega}|\nabla \sqrt{n_\varepsilon}|^2
+\frac{1}{4}\int_{\Omega}|\Delta c_\varepsilon|^2 \nonumber\\
&\le c_3\|\mathcal{A}^{\frac{\delta+1}{2}} \mathbf{u}_\varepsilon\|^2_{L^{2}(\Omega)}
\|\nabla c_\varepsilon\|^2_{L^{2}(\Omega)}
+c_2+16C_*^4\io\nep\qquad \text{in } (0,\infty).
\end{align}
We now employ Lemma \ref{lem5.1} to find $t_1>2$ such that $\io \nep(\cdot,t) < \f3{16C_*^4}$ for every $t>t_1-1$ and every $\eps \in(0,1)$, which is possible since the assumption $\mu>r_+\big(16C_*^4|\Omega|/3\big)^{\gamma-1}$ ensures that $|\Omega|(\f{r_+}{\mu})^{\f1{\gamma-1}}<\f{3}{16C_*^4}$.
%
We thus deduce from \eqref{better-b7} that for $c_4=c_2+3$
\begin{align}\label{b9-first}
&\quad\frac{d}{d t} \Big(\int_{\Omega}n_\varepsilon\ln n_\varepsilon
+\frac{1}{2}\int_{\Omega}|\nabla c_\varepsilon|^2 \Big)+\int_{\Omega}|\nabla \sqrt{n_\varepsilon}|^2
+\frac{1}{4}\int_{\Omega}|\Delta c_\varepsilon|^2 \nonumber\\
&\le c_3\|\mathcal{A}^{\frac{\delta+1}{2}} \mathbf{u}_\varepsilon\|^2_{L^{2}(\Omega)}
\|\nabla c_\varepsilon\|^2_{L^{2}(\Omega)}+c_4
\quad \text { in } (t_1-1, T)\text { for all } \varepsilon \in(0,1).
\end{align}
Based on \eqref{c3} and \eqref{bound:nabla-c-2}, there is $c_{5}>0$ such that
\begin{align}\label{m1}
\int_{t_1-1}^{t_1}\int_\Omega n_\varepsilon\ln n_\varepsilon
+\int_{t_1-1}^{t_1}\int_\Omega |\nabla c_\varepsilon|^2
\le c_{5} \qquad \text{for every } \eps \in(0,1).
\end{align}
Thus, for any fixed $\varepsilon\in(0,1)$, there is $t_\varepsilon\in(t_1-1,t_1)$ such that
\begin{align}\label{m2}
\int_\Omega (n_\varepsilon\ln n_\varepsilon)(x,t_\varepsilon) dx
+\int_\Omega |\nabla c_\varepsilon(x,t_\varepsilon)|^2 dx
\le c_{5}.
\end{align}
We observe that $\f1{e}+z \ln z\ge 0$ for every $z>0$, and recalling \eqref{m2} and \eqref{h0} (for the latter additionally relying on $t_{\eps }>t_1-1>1$), we utilize the boundedness
result given in Lemma \ref{ODI} to find
a constant $c_6>0$ ensuring
\begin{align}\label{b9-second}
\int_{\Omega}|\nabla c_\varepsilon(\cdot, t)|^{2}
\leq c_{6}\quad\text { for all } t\in(t_\varepsilon,T)\text { and } \varepsilon \in(0,1)
\end{align}
and
\begin{align}\label{b10}
\int_{t_\varepsilon}^T\int_{\Omega}|\Delta c_\varepsilon|^2
+\int_{t_\varepsilon}^T\int_{\Omega}|\nabla \sqrt{n_\varepsilon}|^2
\le c_{6}
\quad\text { for all } \varepsilon \in(0,1).
\end{align}
This together with \eqref{b8} implies the existence of $c_{7}>0$ satisfying
\begin{align*}
\int_{t_1}^T\int_{\Omega}n_\varepsilon^2 \le
\int_{t_\varepsilon}^T\int_{\Omega}n_\varepsilon^2
\le c_{7}\quad\text { for all } \varepsilon \in(0,1),
\end{align*}
as desired.
\end{proof}

Motivated by \cite[Lemma 3.8]{TW2016}, the two following results are used to establish higher integrability of $n_\varepsilon$.

\begin{lemma}\label{lem5.5}
Let $C_*>0$ be as specified in \eqref{GN}.
Assume that $\mu> r_+\big(16C_*^4|\Omega|/3\big)^{\gamma-1}$.
Let $t_1>1$ be as specified in Lemma \ref{lem5.4}.
Let $\tilde{t}>t_1$ and $T>\tilde{t}$. If there exist $p\ge2$ and $L>0$ such that
\begin{align}\label{z0}
&\int_{\tilde{t}-1}^{\tilde{t}}\int_{\Omega}n^{p}_\varepsilon(x, t)dxdt
\leq L\quad\text { for all }\varepsilon \in(0,1),
\end{align}
then we can find $C>0$ satisfying
\begin{align}\label{z0'}
&\int_{\Omega}n^{p}_\varepsilon(\cdot, t)
\leq C
\quad\text { for all } t\in(\tilde{t},T)\text { and } \varepsilon \in(0,1)
\end{align}
and
\begin{align}\label{z0''}
&\int_{\tilde{t}}^{T}\int_{\Omega}n^{p+\gamma-1}_\varepsilon(x, t)dxdt
\leq C\quad\text { for all } \varepsilon \in(0,1).
\end{align}
\end{lemma}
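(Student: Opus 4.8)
\medskip
\noindent\textbf{Proof proposal.} The plan is to test the first equation in \eqref{qapp} by $n_\varepsilon^{p-1}$ and to reduce matters to an application of Lemma~\ref{ODI}. Since $\nabla\cdot\mathbf{u}_\varepsilon=0$, the convective contribution $\int_\Omega n_\varepsilon^{p-1}\mathbf{u}_\varepsilon\cdot\nabla n_\varepsilon=\frac1p\int_\Omega\mathbf{u}_\varepsilon\cdot\nabla n_\varepsilon^p$ vanishes, whereas an integration by parts (using $\partial_\nu c_\varepsilon=0$) rewrites the chemotactic term as $(p-1)\int_\Omega n_\varepsilon^{p-1}\nabla n_\varepsilon\cdot\nabla c_\varepsilon=\frac{p-1}{p}\int_\Omega\nabla n_\varepsilon^p\cdot\nabla c_\varepsilon=-\frac{p-1}{p}\int_\Omega n_\varepsilon^p\Delta c_\varepsilon$. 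Writing $v_\varepsilon:=n_\varepsilon^{p/2}$ (so that $(p-1)\int_\Omega n_\varepsilon^{p-2}|\nabla n_\varepsilon|^2=\frac{4(p-1)}{p^2}\int_\Omega|\nabla v_\varepsilon|^2$), dropping the nonpositive term $-\varepsilon\int_\Omega n_\varepsilon^{p+1}$ and invoking \eqref{f} in the form $n_\varepsilon^{p-1}f(n_\varepsilon)\le rn_\varepsilon^p-\mu n_\varepsilon^{p+\gamma-1}$, one obtains
\begin{align*}
\frac1p\frac{d}{dt}\int_\Omega n_\varepsilon^p+\frac{4(p-1)}{p^2}\int_\Omega|\nabla v_\varepsilon|^2+\mu\int_\Omega n_\varepsilon^{p+\gamma-1}\le-\frac{p-1}{p}\int_\Omega n_\varepsilon^p\Delta c_\varepsilon+r\int_\Omega n_\varepsilon^p
\end{align*}
on $(0,\infty)$ for all $\varepsilon\in(0,1)$. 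The term $r\int_\Omega n_\varepsilon^p$ is nonpositive if $r\le0$ and, since $p<p+\gamma-1$, can otherwise be absorbed into $\frac\mu2\int_\Omega n_\varepsilon^{p+\gamma-1}$ up to an $\varepsilon$-independent constant by Young's inequality.

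The heart of the argument is the estimate of $-\frac{p-1}{p}\int_\Omega n_\varepsilon^p\Delta c_\varepsilon$. Here I would use H\"older's inequality together with the two-dimensional Gagliardo--Nirenberg inequality $\|v_\varepsilon\|_{L^4(\Omega)}^2\le c(\|\nabla v_\varepsilon\|_{L^2(\Omega)}\|v_\varepsilon\|_{L^2(\Omega)}+\|v_\varepsilon\|_{L^2(\Omega)}^2)$ and Young's inequality to get, recalling $\int_\Omega n_\varepsilon^p=\|v_\varepsilon\|_{L^2(\Omega)}^2$,
\begin{align*}
\frac{p-1}{p}\int_\Omega n_\varepsilon^p|\Delta c_\varepsilon|\le\frac{p-1}{p}\|v_\varepsilon\|_{L^4(\Omega)}^2\|\Delta c_\varepsilon\|_{L^2(\Omega)}\le\eta\int_\Omega|\nabla v_\varepsilon|^2+C_\eta\big(\|\Delta c_\varepsilon\|_{L^2(\Omega)}^2+1\big)\int_\Omega n_\varepsilon^p,
\end{align*}
and choosing $\eta<\frac{4(p-1)}{p^2}$ lets the gradient term be swallowed by the dissipation on the left. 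Consequently the differential inequality takes the shape \eqref{ODI1} with $y(t)=\int_\Omega n_\varepsilon^p(\cdot,t)$, $h(t)=\frac{\mu p}{2}\int_\Omega n_\varepsilon^{p+\gamma-1}(\cdot,t)$, $a(t)=C\big(\|\Delta c_\varepsilon(\cdot,t)\|_{L^2(\Omega)}^2+1\big)$ and a constant $b$.

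Finally I would invoke Lemma~\ref{ODI} on the interval $(t_\varepsilon,T)$, where by hypothesis \eqref{z0} one may pick $t_\varepsilon\in(\tilde t-1,\tilde t)$ with $\int_\Omega n_\varepsilon^p(\cdot,t_\varepsilon)\le L$. Since $\tilde t>t_1$, the bound \eqref{m0'} of Lemma~\ref{lem5.4} on $\int_{t_1}^T\|\Delta c_\varepsilon\|_{L^2(\Omega)}^2$, together with finiteness of $T$, yields \eqref{ODIcondition} with $M_1,M_2$ independent of $\varepsilon$; Lemma~\ref{ODI} then gives $\sup_{t\in(t_\varepsilon,T)}\int_\Omega n_\varepsilon^p\le C(L+1)$, whence \eqref{z0'}, and $\int_{t_\varepsilon}^T h\le C(L+1)$, whence \eqref{z0''}. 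The main obstacle is exactly the control of $\int_\Omega n_\varepsilon^p\Delta c_\varepsilon$: it is at this point that two-dimensionality enters through the exponents in the Gagliardo--Nirenberg inequality, and that the space--time $L^2$-bound for $\Delta c_\varepsilon$ from Lemma~\ref{lem5.4} -- itself resting on the smallness condition on $\mu/r_+$ and on the improved regularity of $\mathbf{u}_\varepsilon$ from Lemma~\ref{lem5.3} -- is indispensable, since otherwise no $\varepsilon$-independent bound on the coefficient $a$ would be available.
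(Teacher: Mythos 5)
Your proposal is correct and follows essentially the same route as the paper: test $\eqref{qapp}_1$ by $n_\varepsilon^{p-1}$, estimate the resulting $\int_\Omega n_\varepsilon^p\Delta c_\varepsilon$ via H\"older and the 2D Gagliardo--Nirenberg inequality so that the gradient term is absorbed into the dissipation, absorb $r\int n_\varepsilon^p$ into $\frac{\mu}{2}\int n_\varepsilon^{p+\gamma-1}$, and close the argument through Lemma~\ref{ODI} using \eqref{m0'} and a time point $t_\varepsilon\in(\tilde t-1,\tilde t)$ supplied by \eqref{z0}. The only (inconsequential) difference is that the paper uses the Gagliardo--Nirenberg variant $\|n_\varepsilon^{p/2}\|_{L^4}^2\le c\,\|\nabla n_\varepsilon^{p/2}\|_{L^2}\|n_\varepsilon^{p/2}\|_{L^2}+c\,\|n_\varepsilon^{p/2}\|_{L^{2/p}}^2$ so that, via the mass bound \eqref{c2}, the additive term is an absolute constant and enters $b(t)$, whereas you interpolate against $\|n_\varepsilon^{p/2}\|_{L^2}^2=\int_\Omega n_\varepsilon^p$ and thereby push the extra term into the coefficient $a(t)$; both choices satisfy \eqref{ODIcondition} since $T$ is finite.
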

\begin{proof}
Due to \eqref{z0}, for any fixed $\varepsilon\in(0,1)$,
we can find $\tilde{t}_\varepsilon\in (\tilde{t}-1,\tilde{t})$
satisfying
\begin{align}\label{z1}
\int_\Omega n^p_\varepsilon(x,\tilde{t}_\varepsilon) dx
\le L\quad\text { for all } \varepsilon \in(0,1).
\end{align}
We multiply \eqref{qapp}$_1$ by $n_\varepsilon^{p-1}$ and
integrate by parts to see that
\begin{align}\label{z2}
\frac{1}{p}\frac{d}{d t} \int_{\Omega}n^p_\varepsilon
&=-(p-1)\int_{\Omega} n_\varepsilon^{p-2}|\nabla n_\varepsilon|^2
-\frac{1}{p}\int_{\Omega}\mathbf{u}_\varepsilon \cdot\nabla n^{p}_\varepsilon
-\frac{p-1}{p}\int_{\Omega}\nabla n^p_\varepsilon \cdot \nabla c_\varepsilon
\nonumber\\
&\quad+r\int_{\Omega}n^p_\varepsilon
-\mu\int_{\Omega}n^{p+\gamma-1}_\varepsilon -\varepsilon\int_{\Omega}n^{p+1}_\varepsilon
\nonumber\\
&\le-\frac{4(p-1)}{p^2}\int_{\Omega}|\nabla n^{\frac{p}{2}}_\varepsilon|^2
+\frac{p-1}{p}\int_{\Omega}n^p_\varepsilon \cdot \Delta c_\varepsilon
+r\int_{\Omega}n^p_\varepsilon
-\mu\int_{\Omega}n^{p+\gamma-1}_\varepsilon
\end{align}
in $(0,\infty)$ for any $\varepsilon \in(0,1)$;
here we used the fact $\nabla\cdot \mathbf{u}_\varepsilon=0$ in $\Omega\times(0,\infty)$.
It follows by H\"{o}lder's inequality that
\begin{align}\label{z3}
\int_{\Omega}n^p_\varepsilon \cdot \Delta c_\varepsilon
\le \Big(\int_{\Omega}n^{2p}_\varepsilon\Big)^{\frac{1}{2}}
\Big(\int_{\Omega} |\Delta c_\varepsilon|^2\Big)^{\frac{1}{2}}
\quad\text {in } (0,\infty)\text { for all } \varepsilon \in(0,1).
\end{align}
The Gagliardo-Nirenberg inequality and \eqref{c2} allow us to find $c_1,c_2 > 0$ such that
\begin{align}\label{z4}
\left(\int_{\Omega} n^{2 p}_\varepsilon\right)^{\frac{1}{2}}
&=\big\|n^{\frac{p}{2}}_\varepsilon\big\|_{L^{4}(\Omega)}^{2}\nonumber\\
& \leq c_{1}\big\|\nabla
n_\varepsilon^{\frac{p}{2}}\big\|_{L^{2}(\Omega)}
\big\|n_\varepsilon^{\frac{p}{2}}\big\|_{L^{2}(\Omega)}
+c_{1}\big\|n_\varepsilon^{\frac{p}{2}}\big\|_{L^{\frac{2}{p}}(\Omega)}^{2} \nonumber\\
& \leq c_{1}\big\|\nabla n_\varepsilon^{\frac{p}{2}}\big\|_{L^{2}(\Omega)}
\big\|n_\varepsilon^{\frac{p}{2}}\big\|_{L^{2}(\Omega)}
+c_{2}\quad\text {in } (0,\infty)\text { for all } \varepsilon \in(0,1).
\end{align}
In view of \eqref{z3} and \eqref{z4}, we infer from Young's inequality that
\begin{align}\label{z5}
\int_{\Omega}n^p_\varepsilon \cdot \Delta c_\varepsilon
&\le \frac{2(p-1)}{p^2}\int_{\Omega}|\nabla n^{\frac{p}{2}}_\varepsilon|^2
+c_3\|\Delta c_\varepsilon\|^2_{L^2(\Omega)}\int_{\Omega}n^p_\varepsilon  \nonumber\\
& \quad+c_3\int_{\Omega} |\Delta c_\varepsilon|^2 +c_3
\quad\text {in } (0,\infty)\text { for all } \varepsilon \in(0,1)
\end{align}
with $c_3=\max\{\frac{p^2c_1^2}{8(p-1)},\frac{c_2}2\}$. It is easy to verify that
\begin{align}\label{z5'}
r\int_{\Omega}n^p_\varepsilon
\le \frac{\mu}{2}\int_{\Omega}n^{p+\gamma-1}_\varepsilon +c_4
\quad\text {in } (0,\infty)\text { for all } \varepsilon \in(0,1)
\end{align}
 with $c_4=|\Omega|\sup_{s>0} (rs^p-\frac{\mu}2 s^{p+\gamma-1})$. 
 We derive from \eqref{z2}, \eqref{z5} and \eqref{z5'} that
\begin{align}\label{z6}
&\quad \frac{1}{p}\frac{d}{d t} \int_{\Omega}n^p_\varepsilon
+\frac{\mu}{2}\int_{\Omega}n^{p+\gamma-1}_\varepsilon \nonumber\\
&\le c_3 \|\Delta c_\varepsilon\|^2_{L^2(\Omega)}\int_{\Omega}n^p_\varepsilon +c_3\int_{\Omega} |\Delta c_\varepsilon|^2+c_3+c_4
\quad\text {in } (0,\infty)\text { for all } \varepsilon \in(0,1).
\end{align}
Based on \eqref{z1} and \eqref{m0'}, we apply Lemma \ref{ODI} to \eqref{z6} to find $c_5>0$ such that
\begin{align}\label{z7}
\sup_{t\in[\tilde{t}_\varepsilon,T)}\int_{\Omega}n^p_\varepsilon(x,t) dx
\le c_5
\end{align}
and
\begin{align}\label{z8}
\int_{\tilde{t}_\varepsilon}^T\int_{\Omega}n^{p+\gamma-1}_\varepsilon (x,t)dxdt
\le c_5
\end{align}
hold for all $\eps\in(0,1)$. Hence, the proof is complete,
because $(\tilde{t},T)\subset (\tilde{t}_{\eps },T)$ for any $\eps \in(0,1)$.
\end{proof}

\begin{corollary}\label{cor3}
Let $C_*>0$ be as specified in \eqref{GN} and assume that
 $\mu> r_+\big(16C_*^4|\Omega|/3\big)^{\gamma-1}$.
 Let $t_1>1$ be as provided by Lemma \ref{lem5.4}.
Then for each $p\ge2$, there exists $T_p\ge t_1+3$ with the property that for any
$T>T_p$, we can find $C>0$ satisfying
\begin{align*}
&\int_{\Omega}n^{p}_\varepsilon(\cdot, t)
\leq C\quad\text { for all } t\in(T_p,T)\text { and } \varepsilon \in(0,1).
\end{align*}
\end{corollary}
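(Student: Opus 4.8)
The plan is to argue by a finite induction over a sequence of exponents that increase by the fixed positive amount $\gamma-1$ at each step, using Lemma~\ref{lem5.5} as the inductive engine and Lemma~\ref{lem5.4} to provide the base case. Throughout, the hypothesis $\mu> r_+\big(16C_*^4|\Omega|/3\big)^{\gamma-1}$ is in force, and $t_1>1$ denotes the time furnished by Lemma~\ref{lem5.4}; thus, by \eqref{m0'}, for every $T>t_1$ there is a constant (depending on $T$) bounding $\int_{t_1}^T\int_\Omega n_\varepsilon^2$ uniformly in $\varepsilon\in(0,1)$. I would then set $q_j:=2+j(\gamma-1)$ and $\tau_j:=t_1+j$ for $j\in\mathbb{N}_0$, noting that $\gamma>1$ forces $q_j\nearrow\infty$ and $q_j\ge 2$ for every $j$.

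The induction hypothesis at level $j$ is: for every $T>\tau_j$ there is $C=C(T)>0$ with $\int_{\tau_j}^T\int_\Omega n_\varepsilon^{q_j}\le C$ for all $\varepsilon\in(0,1)$. The case $j=0$ is precisely \eqref{m0'}. For the step $j\to j+1$, given any $T>\tau_{j+1}=\tau_j+1$, I would apply Lemma~\ref{lem5.5} with $p=q_j\ge 2$ and $\tilde t=\tau_j+1\in(t_1,T)$; hypothesis \eqref{z0} is met because $(\tilde t-1,\tilde t)=(\tau_j,\tau_j+1)\subseteq(\tau_j,T)$ and the inductive assumption bounds the space-time integral of $n_\varepsilon^{q_j}$ there. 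The conclusion \eqref{z0''} then reads $\int_{\tau_{j+1}}^T\int_\Omega n_\varepsilon^{q_j+\gamma-1}=\int_{\tau_{j+1}}^T\int_\Omega n_\varepsilon^{q_{j+1}}\le C$, which is the hypothesis at level $j+1$, while \eqref{z0'} simultaneously gives $\int_\Omega n_\varepsilon^{q_j}(\cdot,t)\le C$ for all $t\in(\tau_{j+1},T)$ and $\varepsilon\in(0,1)$.

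Finally, given $p\ge 2$, I would choose $j=j(p)\in\mathbb{N}_0$ minimal with $q_{j(p)}\ge p$, set $T_p:=\max\{\tau_{j(p)}+1,\,t_1+3\}\ge t_1+3$, and observe that for every $T>T_p$ the step $j(p)\to j(p)+1$ (which is legitimate since $T>\tau_{j(p)}+1$) yields $\int_\Omega n_\varepsilon^{q_{j(p)}}(\cdot,t)\le C$ on $(T_p,T)$ uniformly in $\varepsilon$. Since $\Omega$ is bounded and $p\le q_{j(p)}$, an application of Hölder's inequality gives $\int_\Omega n_\varepsilon^p(\cdot,t)\le|\Omega|^{1-p/q_{j(p)}}\,\big(\int_\Omega n_\varepsilon^{q_{j(p)}}(\cdot,t)\big)^{p/q_{j(p)}}\le C'$ on $(T_p,T)$, which is the desired assertion. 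The only point requiring attention is the bookkeeping of the successively shrinking time intervals and the unit time-shift $\tau_{j+1}=\tau_j+1$ at each invocation of Lemma~\ref{lem5.5}; because the integrability exponent grows by the fixed amount $\gamma-1>0$ per step, exactly $j(p)$ steps suffice to reach any prescribed $p$, and no analytic obstacle beyond Lemma~\ref{lem5.5} itself is encountered.
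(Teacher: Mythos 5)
Your proposal is correct and is essentially the paper's argument spelled out in full: the paper's own proof is the single sentence "take \eqref{m0'} as the starting estimate and iterate Lemma~\ref{lem5.5}," and your induction with $q_j=2+j(\gamma-1)$, $\tau_j=t_1+j$ is exactly the bookkeeping that sentence leaves implicit. The base case matches \eqref{m0'}, each application of Lemma~\ref{lem5.5} is legitimate (one checks $\tilde t=\tau_{j+1}>t_1$, $T>\tilde t$, and \eqref{z0} from the inductive hypothesis on the unit window $(\tau_j,\tau_{j+1})$), and the final Hölder step to descend from the exponent $q_{j(p)}\ge p$ to $p$ on the bounded domain is routine, so no gap remains.
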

\begin{proof}
Taking \eqref{m0'} as a starting estimate, we can utilize Lemma \ref{lem5.5} to perform an iteration
procedure which results in the desired outcome.
\end{proof}

With these preparations, we are in the position to prove Theorem \ref{th 2}.
\begin{proof}[Proof of Theorem \ref{th 2}]
Let $\mu_0:=\big(16C_*^4|\Omega|/3\big)^{\gamma-1}$ with the pure constant $C_*>0$ defined in \eqref{GN}.
Let the assumptions in Theorem \ref{th 2} hold and $t_1>0$ be the time point specified in Lemma \ref{lem5.4}.
Since $\mu>\mu_0r_+$, Corollary \ref{cor3} guarantees the existence of $\tilde{T}\ge t_1+3$ such that for any $T>\tilde{T}$+2,
there exists $c_1>0$ satisfying
\begin{align}\label{t21}
\|n_\varepsilon(\cdot, t)\|_{L^{4\gamma}(\Omega)}
\leq c_1\quad\text { for all } t\in(\tilde{T},T)\text { and } \varepsilon \in(0,1).
\end{align}
By utilizing smoothing properties of the semigroups $\{{\e}^{t\mathcal{A}}\}_{t\ge0}$ and $\{{\e}^{t\Delta}\}_{t\ge0}$,
we can see that
\begin{align}\label{t22}
\|\mathbf{u}_\varepsilon(\cdot, t)\|_{L^{\infty}(\overline{\Omega})}
\leq c_1\quad\text { for all } t\in(\tilde{T}+1,T)\text { and } \varepsilon \in(0,1)
\end{align}
and
\begin{align}\label{t23}
\|\nabla c_\varepsilon(\cdot, t)\|_{L^{\infty}(\Omega)}
\leq c_2\quad\text { for all } t\in(\tilde{T}+1,T)\text { and } \varepsilon \in(0,1),
\end{align}
where the detailed proofs for \eqref{t22} and \eqref{t23} can be found
in \cite[Lemma 3.11 and 3.12]{TW2016}.
With the aid of \eqref{t21}-\eqref{t23}, we can adopt the arguments used in \cite[Lemma 3.13]{TW2016} to
deduce that
\begin{align}\label{t24}
\|n_\varepsilon(\cdot, t)\|_{C^{\theta}(\overline{\Omega})}
\leq c_3\quad\text { for all } t\in(\tilde{T}+2,T)\text { and } \varepsilon \in(0,1).
\end{align}
with $\theta\in(0,1)$.
By involving the regularity theory for parabolic equations and the Stokes
semigroup \cite{GS,HP}, it can be obtained that $n_\varepsilon$, $c_\varepsilon$ and $u_\varepsilon$
enjoy the uniform Schauder estimates as below,
\begin{align*}
\|n_\varepsilon\|_{C^{2+\lambda,1+\frac{\lambda}{2}}(\overline{\Omega}\times[\tilde{T}+2,T])}
+\|c_\varepsilon\|_{C^{2+\lambda,1+\frac{\lambda}{2}}(\overline{\Omega}\times[\tilde{T}+2,T])}
+\|\mathbf{u}_\varepsilon\|_{C^{2+\lambda,1+\frac{\lambda}{2}}(\overline{\Omega}\times[\tilde{T}+2,T])}
\leq c_3\quad\text { for all } \varepsilon \in(0,1)
\end{align*}
with some $\lambda\in(0,1)$ and $c_3=c_3(T)>0$, here we refer the reader to \cite[Section 3]{L2016}
for more details. According to the Arzel\`a-Ascoli theorem, we can claim that $(n,c,\mathbf{u})$ together with some $P$
classically solves \eqref{q1} on the time interval $(\tilde{T}+2,T)$ and
satisfies that
\begin{align*}
\|n\|_{C^{2,1}(\overline{\Omega}\times[\tilde{T}+2,T])}
+\|c\|_{C^{2,1}(\overline{\Omega}\times[\tilde{T}+2,T])}
+\|\mathbf{u}\|_{C^{2,1}(\overline{\Omega}\times[\tilde{T}+2,T])}
\leq c_3.
\end{align*}
Recalling that $T>\tilde{T}+2$ is arbitrarily chosen, we conclude the proof.
\end{proof}
\section*{Acknowledgements}
 Mengyao Ding is supported by the National Natural
 Science Foundation of China (12071009).

{
\setlength{\parskip}{0pt}
\setlength{\itemsep}{0pt plus 0.3ex}
\small

}
\end{document}